\theoremstyle{plain}
\newtheorem{prob}{Problem}[section]
\newtheorem{defin}[prob]{Definition}
\newtheorem{theorem}[prob]{Theorem}
\newtheorem*{theorem*}{Theorem}
\newtheorem*{question*}{Question}
\newtheorem{prop}[prob]{Proposition}
\newtheorem{corollary}[prob]{Corollary}
\newtheorem{lemma}[prob]{Lemma}
\theoremstyle{remark}
\newtheorem{remark}{Remark}
\newcommand{\R}{\mathbb{R}}
\newcommand{\N}{\mathbb{N}}
\newcommand{\T}{\mathbb{T}}
\newcommand{\C}{\mathbb{C}}
\newcommand{\D}{\mathbb{D}}
\newcommand{\e}{\varepsilon}
\newcommand{\adef}{\begin{defin}}
\newcommand{\zdef}{\end{defin}}
\DeclareMathOperator*{\esssup}{ess\,sup}
\newcommand{\dist}{\textrm{dist}}
\title[Stability of the differential process]{On the stability of the differential process\\
generated by complex interpolation}
\subjclass{Primary: }
\author{Jes\'us M. F. Castillo, Willian H. G. Corr\^ea, Valentin Ferenczi, Manuel Gonz\'alez}
\address{Instituto de Matem\'aticas, Universidad de Extremadura,
Avenida de Elvas s/n, 06011 Badajoz, Spain.} \email{castillo@unex.es}
\address{Departamento de Matem\'atica, Instituto de Matem\'atica e
Estat\'\i stica, Universidade de S\~ao Paulo, rua do Mat\~ao 1010,
05508-090 S\~ao Paulo SP, Brazil} \email{willhans@ime.usp.br}
\address{Departamento de Matem\'atica, Instituto de Matem\'atica e
Estat\'\i stica, Universidade de S\~ao Paulo, rua do Mat\~ao 1010,
05508-090 S\~ao Paulo SP, Brazil.} \email{ferenczi@ime.usp.br}
\address{Departamento de Matem\'aticas, Universidad de Cantabria,
Avenida de los Castros s/n, 39071 Santander, Spain.} \email{manuel.gonzalez@unican.es}
\subjclass[2010]{46B70, 46E30, 46M18}
\thanks{The research of the first author was supported in part by Project IB16056 de la
Junta de Extremadura; the research of the first and fourth authors was supported in part
by Project MTM2016-76958, Spain.\newline
The research of the second author was supported in part by CNPq, grant 140413/2016-2,
CAPES, PDSE program 88881.134107/2016-0, and FAPESP, grants 2016/25574-8 and 2018/03765-1.
The research of the third author was supported by FAPESP, grants 2013/11390-4,
2015/17216-1, 2016/25574-8 and by CNPq, grant 303034/2015-7}
\begin{document}

\begin{abstract} We study the stability of the differential process of Rochberg and Weiss associated to an
analytic family of Banach spaces obtained using the complex interpolation method for families.
In the context of K\"othe function spaces we complete earlier results of Kalton (who showed that there is global bounded stability for pairs of K\"othe spaces) by showing that there is global (bounded) stability for families of up to three K\"othe spaces distributed in arcs on the unit sphere while there is no (bounded) stability for families of four or more K\"othe spaces. In the context or arbitrary pairs of Banach spaces we  present local stability results and global isometric stability results. \end{abstract}

\maketitle

\tableofcontents

\section{Introduction}\label{introduction}
Stability problems associated to interpolation processes have been
a central topic in the theory since its inception. Stability issues about the differential process associated to an analytic family of Banach spaces have also been considered since the seminal work of Rochberg and Weiss \cite{derivatives}. In this paper we will start with an interpolation family $(X_w)_{w\in \partial U}$ on the border of an open subset $U$ of $\mathbb{C}$ conformally equivalent to the open unit disc $\mathbb D$ and will study different stability problems connected to the analytic family of Banach spaces $(X_z)_{z\in U}$ obtained by the complex interpolation method of \cite{Coifman1982}.

Starting with a suitable family $(X_w)_{w\in \partial U}$ of Banach spaces, a complex interpolation method consists in constructing a certain Banach space $\mathscr F$ of analytic functions on $U$ with values in a Banach space $\Sigma$ to then obtain the analytic family of Banach spaces $X_z= \{f(z) : f\in \mathscr F\}$, endowed with the quotient norm in $\mathscr F/\ker \delta_z$, where $\delta_z:\mathscr F\to\Sigma$ obviously denotes the
continuous evaluation map at $z\in U$. A particularly important case is the complex method described in \cite{BL}, in which $U$ is fixed as the unit strip $\mathbb S=\{z\in\C : 0<\textrm{Re} z<1\}$ and the starting family is just an interpolation pair $(X_0,X_1)$ of Banach spaces. In this case $X_z=X_{\textrm{Re}\, z}$; so it is usual to consider only the scale $(X_\theta)_{0<\theta<1}$.\smallskip

Analytic families of Banach spaces are also relevant to other topics in Banach space theory such as the construction of
uniformly convex hereditarily indecomposable spaces \cite{ferencziunif}, the study of $\theta$-Hilbertian spaces \cite{pisierlattices} introduced by Pisier and related to a question of V. Lafforgue (see the final comments of Section \ref{sect:Lafforgue}), or problems about the
uniform structure of Banach spaces. Recall that the question of whether the unit sphere of a uniformly convex space is uniformly
homeomorphic to the unit sphere of a Hilbert space can be positively answered in K\"othe spaces using interpolation methods: if $X_0$ and $X_1$ are uniformly convex spaces, then the unit spheres of $X_\theta$
and $X_\nu$ are uniformly homeomorphic for $0<\theta, \nu<1$ by a result of
Daher \cite{daher}; this fact, together with an extrapolation theorem of Pisier \cite{pisierlattices},
implies that the unit sphere of a uniformly convex K\"othe space is uniformly homeomorphic to the unit
sphere of the Hilbert space (see also \cite{chaatit}). Thus, an extrapolation theorem for arbitrary uniformly convex spaces would provide a positive answer to the problem.\medskip

Analytic families of Banach spaces generated by an interpolation process in turn generate a differential process $z\to \Omega_z$ for $z\in U$, where  $\Omega_z$ is a certain non-linear map defined on $X_z$, called the associated derivation at $z$. In the context of K\"othe spaces, derivations are centralizers, in the terminology of Kalton \cite{kaltmem,kaltdiff}, and therefore can be used in the standard way to generate twisted sums \begin{equation}\label{exact}\begin{CD}
0@>>> X_z@>>> d_{\Omega_z}X_z @>>> X_z@>>>0
\end{CD}\end{equation}
Rochberg's approach \cite{roch}, however, contemplates the formation of the so-called derived spaces $dX_z= \{(f'(z),f(z)) : f\in \mathscr F\}$ endowed with the obvious quotient norm to then show that both constructions are isomorphic; i.e., $dX_z \sim d_{\Omega_z}X_z$.\medskip

Thus, the stability of the differential process associated to an analytic family $(X_z)_{z\in U}$ can be studied at several levels.
At the basic level, one considers the stability of isomorphic properties $\mathcal P$ of the spaces
$X_z$ either under small perturbations in the parameter $z$ (local stability) or for the whole range of the
parameter (global stability). Results of this kind have been obtained by many authors. Let us mention one especially interesting obtained by Kalton and Ostrovskii \cite{kaltostr}: If $d_K(A,B)$ denotes the Kadets distance between two Banach spaces $A$ and $B$, a property $\mathcal P$ is said to be open if for every $X$ having $\mathcal P$ there exists $C_X>0$ such that $Y$ has $\mathcal P$ when $d_K(X,Y)<C_X$, while $\mathcal P$ is said to be stable if there exists $C>0$ such that if $X$ has $\mathcal P$ and $d_K(X,Y)<C$ then
$Y$ has $\mathcal P$. Many examples of open and stable properties can be found in \cite{AAG:92} or \cite[Section 5]{kaltostr}. Kalton and Ostrovskii show \cite[Theorem. 4.5]{kaltostr} that $d_K(X_t, X_s) \leq 2{\sf h}(t, s)$, where ${\sf h}$
is the pseudo-hyperbolic distance on $U$ (see Definition \ref{def:hyp-dist}). Thus, at its basic level, the differential process has local stability with respect to open properties and global stability with respect to stable properties.\medskip

At the first level we will consider stability problems for the family $(dX_z)_{z\in U}$ of derived spaces. We will also consider stability problems at level $n$, i.e., stability problems for the families of higher order Rochberg's derived spaces \cite{roch} $d^nX_z= \{(\frac{1}{n!}f^{(n)}(z),\ldots, f(z)) : f\in \mathscr F\}$. These  spaces are endowed with the obvious quotient norm and can also be interpreted as twisted sum spaces \cite{cck}.
As a typical result, we will show a generalized form for the Kalton-Ostrovskii result mentioned
before: $d_K(d^nX_z, d^nX_\eta)\leq 4(n+1){\sf h}(z,\eta)$ , which implies local/global stability for open/stable properties of $d^nX_z$; see Theorem \ref{kadetsderived}.\medskip

The interpretation of derived spaces $dX_z$ as twisted sum spaces (of $X_z$) generated by the corresponding derivation $\Omega_z$ allows
one to study the stability of the exact sequences involved, which is what we will mainly do in the paper. Let us recall that an exact sequence
like (\ref{exact}) is said to split when $X_z$ is complemented in $dX_z$; something that happens when $\Omega_z$ can be written as the sum of a bounded plus a linear map, usually refereed to as: $\Omega_z$ is trivial. Thus, two derivations $\Omega_z$ and $\Omega_z'$ are said to be equivalent when $\Omega_z - \Omega_z'$ is trivial. Kalton's approach to complex
interpolation instead relies on the use of bounded derivations and the notion of bounded equivalence: two derivations $\Omega_z$ and $\Omega_z'$ are said to be boundedly equivalent when $\Omega_z - \Omega_z'$ is bounded. Probably the first stability results at level one have been those obtained by Cwikel, Jawerth, Milman and Rochberg \cite{cjmr} for the minimal $(\theta,1)$-interpolation method
applied to an interpolation pair $(X_0,X_1)$. They reinterpret the results of Zafran \cite{zaf} to show that whenever $\Omega_\theta$ is
bounded for some $0<\theta<1$, then all $\Omega_z$ are bounded and, moreover, $X_0=X_1$ up to a renorming. Kalton
obtains in \cite{kaltdiff} a similar optimal stability result in the context of complex interpolation for pairs of K\"othe
function spaces: $\Omega_\theta$ is bounded for some $\theta\in \mathbb S$ if and only all $\Omega_z$ are bounded for all $z\in \mathbb S$ and, moreover, $X_0 = X_1$, up to an equivalent renorming. See Theorem \ref{kalthm} for the precise statement. Kalton's result leaves several questions unanswered, and a good part of this paper is devoted to solving them. We complete Kalton's result by showing: i) in the context of complex interpolation pairs $(X_0,X_1)$ of superreflexive K\"othe spaces $\Omega_\theta$ is trivial if and only if there is a weight function $w$ so that $X_0=X_1(w)$, up to an equivalent renorming, thus solving the stability problem for splitting (for pairs of K\"othe spaces).
ii) The stability results for pairs remain valid for families of up
to three K\"othe spaces distributed in three arcs of the unit circle $\T$, but fail for
families of four K\"othe spaces. Somehow this marks the limit of validity for Kalton's theorems. If one abandons the
three arcs configuration then it is worth to take into account the results of Qiu  \cite{Yanqi-Qiu}, who shows that at the basic $0$ level complex interpolation for families is stable under rearrangements for two spaces, but it is not stable for three spaces. His results, however, only considers finite-dimensional spaces, while the non-stability we describe concerns isomorphic properties.\medskip

We move then to consider stability problems in the context of couples and families of arbitrary Banach spaces. Regarding families, Theorem \ref{stabilityfam} presents the  $1$-level interpretation of the classical reiteration result for
families of Coifman, Cwikel, Rochberg, Sagher and Weiss \cite[Theorem 5.1]{Coifman1982}; this result explains, to some extent, the lack of stability in the previous counterexamples and can be used to obtain other natural counterexamples. We thank B.\ Maurey and G.\ Pisier at this point for helpful discussions. In the construction of new counterexamples an analogue of Rochberg's concept of flat analytic family \cite{complex} is used. Let $\|\cdot\|$ be a norm on $\C^n$ and let $(T_z)$ be a family of invertible linear maps on
$\C^n$ which vary analytically with $z\in \mathbb D$, the unit disc.
Define $\|x\|_z = \|T_z^{-1} x\|$. The family $(\C^n, \|\cdot\|_z)_{z\in \mathbb D}$ is called
a \emph{flat analytic family} on $\mathbb D$. The transport of this concept to infinite dimensional spaces causes some complications, and we introduce a notion of ``coherence" to handle them in Section \ref{sect:Lafforgue}. Proposition \ref{cbounded} shows the existence of a flat analytic family of K\"othe sequence spaces with norms $\|x\|_z = \|e^{-D(z)}x\|_2$ ($z\in \mathbb D$) generated by an analytic family $D(z)$ of
diagonal operators for which the derivation map $\Omega_z$ is linear and does not depend on $z$.

Regarding couples, curiously, the existence of local or global stability for the differential process associated to complex
interpolation of a couple of Banach spaces remains still an open problem; precisely, \emph{Assume $(X_0,X_1)$ is a pair of Banach spaces such that $\Omega_\theta$ is bounded for some
$0<\theta<1$.  Does it follow that $X_0=X_1$ up to equivalence of norms?}\medskip

In Section \ref{stabilitygen} we present isometric stability results for arbitrary couples of Banach spaces having a common
Schauder basis and for couples of r.i. K\"othe spaces. A key role in our analysis is played by the properties of the extremal functions and by some differential estimates for the norm in an interpolation scale. In \cite[Theorem 5.2]{cjmr} Cwikel et al. obtained the estimate
$$
\frac{d}{d\theta}\|a\|_{\theta,1} \sim \|a\|_{\theta,1} + \|\Omega_\theta a\|_{\theta,1}
$$
for the minimal $(\theta,1)$-method applied to a pair $(X_0, X_1)$ when $X_0$ is continuously embedded in $X_1$. Our version of this estimate for the complex method (Lemma \ref{locallemma}) is
$$
\left|\frac{d}{dt}\|a\|_{t}\right|_{t = \theta^{\pm}} \leq \|\Omega_\theta a\|_{\theta}.
$$
from which we derive a number of stability results for pairs. In many standard situations derivations are uniquely defined
so it makes sense to study exact stability (instead of up to
a bounded or a bounded plus linear perturbation) problems. We show that exact stability is related to {\em isometric} characterizations of $X_0$ and $X_1$. In particular, Theorem \ref{unificado} provides a complete and explicit characterization of pairs $(X_0,X_1)$ of spaces
for which $\Omega_\theta$ is linear.

\section{Preliminary results}\label{firstresults}

For background on the theory of twisted sums and diagrams we refer to \cite{accgmLN,castgonz}.
A twisted sum of two Banach spaces $Y$, $Z$ is  a quasi-Banach space $X$ which has a closed subspace
isomorphic to $Y$ such that the quotient $X/Y$ is isomorphic to $Z$.
An exact sequence $0\to Y\to X\to Z\to 0$
of Banach spaces and continuous operators is a diagram in which the kernel of each arrow coincides with the
image of the preceding one.
Thus, the open mapping theorem yields that the middle space $X$ is a twisted sum of $Y$ and $Z$.
The simplest exact sequence is obtained taking $X= Y\oplus Z$ with embedding $y\to (y,0)$ and quotient map
$(y,z)\to z$.
Two exact sequences $0 \to Y \to X_1 \to Z \to 0$ and $0 \to Y \to X_2 \to Z \to 0$ are said to be
\emph{equivalent} if there exists an operator $T:X_1\to X_2$ such that the following diagram commutes:
$$
\begin{CD}
0 @>>>Y@>>>X_1@>>>Z@>>>0\\
&&@| @VVTV @|\\
0 @>>>Y@>>>X_2@>>>Z@>>>0.
\end{CD}$$

The classical 3-lemma \cite[p. 3]{castgonz} shows that $T$ must be an isomorphism.
An exact sequence is said to be \emph{trivia}l if it is equivalent to $0 \to Y \to Y \oplus Z \to Z \to 0$.
In this case we also say that the exact sequence \emph{splits}.
This is equivalent to the subspace $Y$ being complemented in $X$.
\smallskip

Kalton \cite{kaltmem,kaltdiff} developed a deep theory connecting derivations and twisted sums in
the specific case of K\"othe function spaces that we briefly describe now because it is essential to
understand our work. Let us even present Kalton's definition of K\"othe function space since it is slightly different from the standard one
\cite{lindtzaf1}. According to Kalton \cite[p.482]{kaltdiff}, given a $\sigma$-finite measure Polish space $(S, \mu)$, a norm $\|\cdot\|_X$ on a vector subspace $X$ of $L_0(\mu)$, the space of all complex-valued functions on $S$ is admissible when $X= \{f\in L_0: \|f\|<+\infty\}$ and its closed unit ball is closed in $L_0$ and, moreover, there exist strictly positive functions $h, k\in L_0$ such that
$\|h f\|_1 \leq \|f\|_X \leq \|k f\|_\infty$ for every $f \in L_0$. A K\"othe space is a sublattice of $L_0$ endowed with an admissible norm. Let now $X$ be a K\"othe function space.
A \emph{centralizer on $X$} is a homogeneous map $\Omega: X \to L_0(\mu)$ for which there is a
constant $C$ such that, given $f\in L_\infty(\mu)$ and $x\in X$, $\Omega(fx)- f\Omega(x)\in X$ and
$\|\Omega(fx)- f\Omega(x)\|_X\leq C\|f\|_\infty  \|x\|_X.$

A centralizer $\Omega$ on $X$ induces an exact sequence
$$
\begin{CD}
0@>>> X@>j>> X\oplus_\Omega X  @>q>> X@>>>0,
\end{CD}
$$
where $X\oplus_\Omega X = \{(f, x)\in L_0\times X: f- \Omega x \in X\}$, endowed with the quasi-norm
$\|(f, x)\|_\Omega = \|f - \Omega x\|_X + \|x\|_X$, with inclusion $j(y) = (y,0)$ and quotient map
$q(f,x)=  x$.

We say that a centralizer $\Omega$ is \emph{trivial} if the exact sequence induced by $\Omega$ splits.
We have the following known equivalence of triviality:

\begin{prop}\label{trivial-centr}
A centralizer $\Omega: X \to L_0(\mu)$ is trivial if and only if there exists a linear map
$L: X\to L_0(\mu)$ such that $\Omega - L$  is a bounded map from $X$ to $X$.
\end{prop}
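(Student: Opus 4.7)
The plan is to translate the splitting of the exact sequence induced by $\Omega$ into the existence of a bounded linear section of the quotient map $q: X\oplus_\Omega X \to X$, and then to read off the linear map $L$ directly from the first coordinate of the section. Recall that for an exact sequence of quasi-Banach spaces, splitting is equivalent to the existence of a bounded linear $s: X \to X\oplus_\Omega X$ with $q\circ s = \mathrm{id}_X$ (this is the standard equivalence between complementation of the kernel and the existence of a continuous linear right inverse of the quotient map; see \cite{castgonz}). Since $q(f,x)=x$, any such section is forced to have the form $s(x) = (L(x), x)$ for some map $L: X \to L_0(\mu)$, and the linearity and boundedness of $s$ correspond, respectively, to the linearity of $L$ and to an $X$-norm estimate on $L(x)-\Omega(x)$.

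For the implication $(\Leftarrow)$, given a linear $L: X \to L_0(\mu)$ with $\Omega - L$ bounded from $X$ to $X$, I would define $s(x):=(L(x),x)$. Membership of $s(x)$ in $X\oplus_\Omega X$ is precisely the assumption that $L(x)-\Omega(x)\in X$, and the quasi-norm estimate
$$\|s(x)\|_\Omega = \|L(x)-\Omega(x)\|_X + \|x\|_X \le (\|\Omega-L\|+1)\,\|x\|_X$$
shows that $s$ is bounded; since $q\circ s = \mathrm{id}_X$ by construction, the sequence splits. For the converse $(\Rightarrow)$, starting from a bounded linear section $s$ of $q$, I would write $s(x) = (L(x), x)$, so that $L$ is automatically linear because $s$ is, and homogeneous to begin with. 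The defining condition of $X\oplus_\Omega X$ gives $L(x)-\Omega(x) \in X$, and the bound $\|L(x)-\Omega(x)\|_X \le \|s(x)\|_\Omega \le \|s\|\cdot\|x\|_X$ says exactly that $\Omega - L$ is a bounded map from $X$ to $X$.

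The substantive conceptual point, rather than a genuine obstacle, is that $L$ is allowed to take values in the ambient space $L_0(\mu)$ rather than in $X$ itself: $\Omega$ is typically unbounded as a map into $X$, and the construction of $X\oplus_\Omega X$ as a subspace of $L_0(\mu)\times X$ is designed precisely to accommodate this mismatch while keeping $L(\cdot)-\Omega(\cdot)$ inside $X$. Beyond carefully keeping track of the target spaces, the argument is a direct unwinding of the definitions of centralizer, of $X\oplus_\Omega X$, and of splitting.
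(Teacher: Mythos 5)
Your proof is correct and follows essentially the same route as the paper: the converse direction is identical (read $L$ off the first coordinate of a bounded linear section of $q$), and in the forward direction you exhibit the section $s(x)=(Lx,x)$ directly where the paper instead writes down the complementary projection $(f,x)\mapsto(f-Lx,0)$ — two trivially equivalent formulations of splitting. No gaps.
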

\begin{proof}
If a map $L$ as above exists then the map $(f,x)\to (f-Lx,0)$ is a linear bounded projection on
$X\oplus_\Omega X$ with range $j(X)$.
Indeed, $f-Lx=f-\Omega x+\Omega x- Lx\in X$
and $\|f-Lx\|_X\leq \|(f,x)\|_\Omega+\|(\Omega-L)x\|_X$.

Conversely, if $\Omega$ is trivial then there is a bounded linear map $S:X\to X\oplus_\Omega X$
such that $qS$ is the identity on $X$ \cite[Lemma 1.1.a]{castgonz}.
Then $Sx=(Lx,x)$ for some linear map $L:X\to L_0(\mu)$.
Since $\|(Lx,x)\|_\Omega = \|Lx - \Omega x\|_X + \|x\|_X\leq\|S\|\cdot\|x\|_X$, $L$ satisfies
the required conditions.
\end{proof}

A centralizer $\Omega$ on $X$ is said to be \emph{real} if $\Omega(x)$ is real whenever $x\in X$ is real.
Kalton's theorem stated below establishes that all real centralizers essentially arise from complex
interpolation of an interpolation pair of K\"othe spaces. In the next theorem $\Omega_\theta$ denotes the derivation on $X_\theta$ induced by the interpolation pair $(X_0,X_1)$, which will be fully described in the next section.
%

\begin{theorem}\label{kalthm} \emph{\cite{kaltmem,kaltdiff}}
\begin{enumerate}
\item Given an interpolation pair $(X_0,X_1)$ of complex K\"othe function spaces and $0<\theta<1$,
the derivation $\Omega_\theta$ is a (real) centralizer on $X_\theta$.
\item For every real centralizer $\Omega$ on a separable superreflexive K\"othe function space $X$ there is a
number $\varepsilon>0$ and an interpolation pair $(X_0, X_1)$ of K\"othe function spaces so that $X=X_\theta$
for some $0<\theta<1$ and $\varepsilon\Omega - \Omega_\theta: X_\theta \to X_\theta$ is a bounded map.
\item The derivation $\Omega_\theta$ is bounded as a map $X_\theta \to X_\theta$ for some $\theta$ if and
only if $X_0=X_1$, up to an equivalent renorming.
In this case  $\Omega_\theta$ is bounded for all $\theta$.
\end{enumerate}
\end{theorem}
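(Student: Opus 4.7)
This is Kalton's theorem from \cite{kaltmem,kaltdiff}, and the plan is to follow his program in three matching steps.

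For (1), I would define $\Omega_\theta(x) := F'(\theta)$ where $F \in \mathscr{F}$ is a nearly extremal function with $F(\theta) = x$ and $\|F\|_{\mathscr{F}} \leq (1+\varepsilon)\|x\|_{X_\theta}$, and then prove the centralizer inequality by multiplying $F$ by an analytic $L_\infty(\mu)$-valued lifting of $g \in L_\infty(\mu)$. The key technical ingredient is the existence of an analytic $\tilde{g} \colon \mathbb{S} \to L_\infty(\mu)$ with $\tilde{g}(\theta) = g$ and $\sup_{z\in\mathbb{S}}\|\tilde{g}(z)\|_\infty \leq C\|g\|_\infty$; this can be engineered via a complex exponential trick, writing $g$ in terms of a real bounded $h$ and setting $\tilde{g}(z) = e^{i\alpha(z)h}$ for a scalar analytic $\alpha$ with $\alpha(\theta) = 1$ and bounded imaginary part on $\mathbb{S}$. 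Then $\tilde{g}F \in \mathscr{F}$ represents $gx$ at $\theta$, and the product rule gives $\Omega_\theta(gx) - g\Omega_\theta(x) = \tilde{g}'(\theta)x$ modulo terms controlled by the near-extremality of the chosen functions, with a Schwarz--Pick estimate bounding $\|\tilde{g}'(\theta)\|_\infty \leq C\|g\|_\infty$.

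For the deep converse (2), my plan is to invoke Kalton's representation theorem for real centralizers on separable superreflexive K\"othe spaces: up to a bounded perturbation, every such $\Omega$ admits the form $\Omega(x) = x \log \phi(x)$ for a suitable positively homogeneous functional $\phi \colon X \to L_0$. From this representation I would build the pair by taking $\|x\|_{X_0}$ and $\|x\|_{X_1}$ to be exponentially weighted versions of $\|x\|_X$ dictated by $\phi$, so that the complex interpolation scale produces $X_\theta = X$ at some $\theta \in (0,1)$ and the resulting derivation recovers $\varepsilon\Omega$ up to a bounded map. Superreflexivity is used essentially: it provides a Radon--Nikodym type property for vector-valued analytic functions that makes nearly extremal $F$ differentiable at interior points, so that both $\Omega_\theta$ and the representation theorem are well defined.

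Part (3) has a trivial direction and a substantive one. If $X_0 = X_1$ up to renorming, the constant $F(z) = x$ is admissible in $\mathscr{F}$, so $\Omega_\theta(x) = 0$ is bounded for every $\theta$. Conversely, if $\Omega_\theta$ is bounded, Kalton's dictionary between centralizers and log-derivatives of the interpolation scale identifies $\Omega_\theta$, up to bounded perturbation, with $x \log(\|x\|_{X_1}/\|x\|_{X_0})$ on appropriate level sets, so its boundedness forces $\log(\|\cdot\|_{X_1}/\|\cdot\|_{X_0})$ to be uniformly bounded and hence $X_0 = X_1$ up to equivalence. The main obstacle throughout is part (2): the representation of an arbitrary real centralizer in canonical exponentially weighted form is the core structural difficulty, and it is where superreflexivity is irreducibly invoked.
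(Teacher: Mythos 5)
The paper states this as Kalton's theorem and does not reprove it, so there is no in-text proof to compare against; you are being measured against Kalton's argument in \cite{kaltdiff}. Two substantive issues. In part~(1), the lift $\tilde g(z)=e^{i\alpha(z)h}$ requires writing $g=e^{ih}$ for a real bounded $h$, which forces $|g|=1$ a.e.; a general $g\in L_\infty$ may vanish on a positive-measure set, so the exponential trick breaks down exactly where it is needed. It is also unnecessary: because $X_0,X_1$ are K\"othe lattices, multiplication by $g$ is bounded on each, so the constant lift $\tilde g\equiv g$ already gives $gF\in\mathscr F$ with $\|gF\|_{\mathscr F}\le\|g\|_\infty\|F\|_{\mathscr F}$; then $gF-B_\theta(gx)\in\ker\delta_\theta$ with norm $O(\|g\|_\infty\|x\|_\theta)$, and Lemma~\ref{deltas-lemma}(2) bounds the derivative of that difference at $\theta$, yielding the centralizer inequality with no analytic lifting at all. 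This is closer to what Kalton actually does.

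Part~(2) is where the real gap lies. You invoke as a black box the representation $\Omega(x)=x\log\phi(x)$, but producing such a representation from an abstract real centralizer is essentially the content of the theorem, not a lemma you can cite on the way to it. Kalton's mechanism is the indicator calculus recalled in this paper: pass from $\Omega$ to $\Omega^{[1]}$ on $L_1$ via Lozanovskii factorization, form $\Phi^\Omega(f)=\int\Omega^{[1]}(f)\,d\mu$ on a semi-ideal, split $\Phi^\Omega$ as a difference $\Phi_{X_0}-\Phi_{X_1}$ of indicator functions, and then reconstruct the pair from its indicators. Your account of why superreflexivity is needed --- a Radon--Nikodym property giving differentiability of near-extremals --- is not the actual bottleneck, since analyticity of members of $\mathscr F$ already gives interior differentiability; superreflexivity enters through the lattice $p$-convexity/$q$-concavity estimates needed to make the reconstructed $X_0,X_1$ genuine K\"othe spaces and to fix the rescaling $\varepsilon$. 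For part~(3) the forward direction is fine, but the converse should be closed through Theorem~\ref{thm-indicator-of-centralizer}(1), which gives $\Phi^{\Omega_\theta}=\Phi_{X_0}-\Phi_{X_1}$, combined with the uniqueness of indicator functions (\cite[Proposition 4.5]{kaltdiff}), rather than an informal pointwise estimate on $\log(\|\cdot\|_{X_1}/\|\cdot\|_{X_0})$.
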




Recall that given two closed subspaces $M, N$ of a Banach space $Z$, and denoting $B_M$ the unit ball of $M$,
the \emph{gap $g(M,N)$ between $M$ and $N$} is defined as follows:
$$
g(M,N) = \max \big\{\sup_{x\in B_M}\dist(x,B_N), \sup_{y\in B_N}\dist(y,B_M)\big\}.
$$

The \emph{Kadets distance} $d_K(X,Y)$ between two Banach spaces $X$ and $Y$ is the infimum of the gap
$g(i(X), j(Y))$ taken over all the isometric embeddings of $i, j$ of $X, Y$ into a common superspace.
%

\begin{prop}\label{prop:gap-quot} \cite[Theorem 4.1]{kaltostr}
Let $E$ and $F$ be closed subspaces of a Banach space $Z$. Then $d_K(Z/E,Z/F)\leq 2 g(E,F)$.
\end{prop}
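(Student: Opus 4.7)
Fix $\gamma > g(E,F)$: for every $e \in B_E$ there is $f \in B_F$ with $\|e-f\| \leq \gamma$, and symmetrically. It suffices to construct a Banach space $W$ with linear isometric embeddings $i \colon Z/E \to W$ and $j \colon Z/F \to W$ such that $g(i(Z/E), j(Z/F)) \leq 2\gamma$; letting $\gamma \to g(E,F)$ then yields the result. The intuition is that every $x \in Z/E$ admits a near-optimal lift $\tilde x \in Z$ with $\|\tilde x\| \leq (1+\delta)\|x\|$, and $y := \pi_F(\tilde x) \in Z/F$ is a natural partner for $x$, where $\pi_E \colon Z \to Z/E$ and $\pi_F \colon Z \to Z/F$ denote the canonical quotient maps. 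The factor $2$ anticipates that the passage $x \leadsto \tilde x \leadsto y$ pays two quotient-gap penalties.

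A natural first candidate is the quotient
\[
W := \bigl((Z/E) \oplus_1 Z \oplus_1 (Z/F)\bigr) / N, \qquad N := \{(\pi_E(z), -z, \pi_F(z)) : z \in Z\},
\]
with $i(x) := [(x,0,0)]$ and $j(y) := [(0,0,y)]$. To verify that $i$ (and, by symmetry, $j$) is isometric one uses $\|\pi_E(w)\| \leq \|w\|$ and the triangle inequality: for every $w \in Z$,
\[
\|x + \pi_E(w)\|_{Z/E} + \|w\|_Z + \|\pi_F(w)\|_{Z/F} \geq \|x\|_{Z/E} - \|\pi_E(w)\| + \|w\|_Z \geq \|x\|_{Z/E},
\]
with equality at $w = 0$, so $\|i(x)\|_W = \|x\|_{Z/E}$. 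For the gap estimate, given $x \in B_{Z/E}$ and a lift $\tilde x$ of norm at most $1+\delta$, setting $y := \pi_F(\tilde x)$ and taking the representative of $i(x) - j(y) = [(x,0,-y)]$ obtained by subtracting $(\pi_E(\tilde x), -\tilde x, \pi_F(\tilde x)) \in N$ gives the element $(0, \tilde x, 0)$ of $\ell_1$-norm $\|\tilde x\| \leq 1+\delta$; this only provides the trivial bound $1$.

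The \textbf{main obstacle} is sharpening this estimate to $2\gamma$. The $\ell_1$-sum with the genuine $Z$-norm in the middle makes $i,j$ manifestly isometric but forces paying the full $\|\tilde x\|_Z$, which discards the gap information. The fix is to replace the middle norm by a suitably designed equivalent seminorm that is small on representatives aligned with $E+F$; a natural candidate is $\|w\|_* := \|w+E\|_{Z/E} + \|w+F\|_{Z/F}$, under which $i$ and $j$ remain isometric (the same triangle argument applies, since $\|w\|_* \geq \|\pi_E(w)\|$ and $\|w\|_* \geq \|\pi_F(w)\|$), while the cost of a lift $\tilde x$ becomes $\|x\| + \|\pi_F(\tilde x)\|$, allowing one to transfer $Z$-cost to quotient-cost via the gap. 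The factor $2\gamma$ emerges by decomposing $\tilde x$ so that, after an auxiliary element $e \in E$ (resp.\ $f \in F$) is introduced as in the gap inequality, the two contributions $\|\pi_F(e)\| \leq \gamma \|e\|$ and $\|\pi_E(f)\| \leq \gamma \|f\|$ combine additively. Calibrating the choice of $N$ (or equivalently of $\|\cdot\|_*$) so that the two gap inequalities collapse to the sharp factor $2$ rather than a larger constant, and handling the $\delta \to 0$ limit, constitutes the technical heart of the proof.
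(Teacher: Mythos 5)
The paper does not prove this proposition; it is stated verbatim and cited as Theorem 4.1 of Kalton--Ostrovskii \cite{kaltostr}, so there is no in-paper argument to compare against and your proposal must stand on its own. It does not: you explicitly leave the ``technical heart'' (calibrating the middle seminorm so that the gap drops from the trivial bound $1$ to $2\gamma$) unproved, and the fix you sketch cannot possibly achieve that bound. Concretely, with $\|w\|_{*} = \|\pi_E(w)\|_{Z/E} + \|\pi_F(w)\|_{Z/F}$ in the middle, for every $w \in Z$ and $y \in Z/F$ one has
\[
\|x + \pi_E(w)\|_{Z/E} + \|w\|_{*} + \|\pi_F(w) - y\|_{Z/F} \;\geq\; \|x + \pi_E(w)\|_{Z/E} + \|\pi_E(w)\|_{Z/E} \;\geq\; \|x\|_{Z/E},
\]
so $\|i(x)-j(y)\|_W \geq \|x\|_{Z/E}$ for \emph{every} $y$, and hence $g\bigl(i(Z/E),j(Z/F)\bigr)\geq 1$ no matter how small $g(E,F)$ is. The $Z$-norm in the middle of your first candidate was never the obstacle; any choice of middle seminorm dominating $\|\pi_E(\cdot)\|_{Z/E}$ (which you need for $i$ to be isometric) forces the same $\geq \|x\|$ lower bound, so the whole family of constructions of the form $\bigl((Z/E)\oplus_1 (Z,\|\cdot\|_\bullet)\oplus_1(Z/F)\bigr)/N$ with this $N$ is dead on arrival. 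A genuinely different mechanism is needed to make the gap $g(E,F)$ enter; the approach as sketched does not contain the idea that would produce the factor $2g(E,F)$.

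A minor arithmetic slip, independent of the above: in your first candidate, subtracting $(\pi_E(\tilde x),-\tilde x,\pi_F(\tilde x))$ from $(x,0,-y)$ with $x=\pi_E(\tilde x)$ and $y=\pi_F(\tilde x)$ yields $(0,\tilde x,-2y)$, not $(0,\tilde x,0)$; the clean representative requires $y=-\pi_F(\tilde x)$. This does not affect the conclusion that the first candidate only gives the trivial bound, but it should be corrected if you pursue a construction of this type.
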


\section{Kalton spaces of analytic functions}\label{sect:admissible}

Here we present the abstract version of the complex interpolation method introduced in \cite{kalt-mon}
and other previous papers of Kalton \cite{kaltmem,kaltdiff,kaltostr}.
Along the section $U$ will be an open subset of $\C$ conformally equivalent to the unit disc $\mathbb D$.
The closure and the boundary of $U$ are denoted $\overline U$ and $\partial U$, and we will write
$\T=\partial \mathbb D$.

\begin{defin}\label{def:admissible}
A \emph{Kalton space} is a Banach space $\mathscr{F}\equiv (\mathscr{F}(U, \Sigma),\|\cdot\|_{\mathscr F})$
of analytic functions on $U$ with values in a complex Banach space $\Sigma $ satisfying the following
conditions:
\begin{itemize}
\item[(a)] For each $z\in U$, the evaluation map $\delta_z:\mathscr F\to {\Sigma}$ is bounded.
\item[(b)] If $\varphi:U\to\mathbb D$ is a conformal equivalence and $f:U\to \Sigma$ is an
analytic map, then $f\in\mathscr F$ if and only if $\varphi\cdot f\in\mathscr F$, and in this case
$\|\varphi\cdot f\|_{\mathscr F}= \|f\|_{\mathscr F}$.
\end{itemize}
\end{defin}


Given a Kalton space $\mathscr{F}(U, \Sigma)$, for each $z\in U$ we define
$$
X_z=\{x\in {\Sigma}: x = f(z) \quad \text{ for some } f\in\mathscr F\}
$$
which endowed with the norm $\|x\|_z=\inf\{\|f\|_{\mathscr F}: x = f(z)\}$ is isometric to
$\mathscr F/\ker\delta_z$.

The family $(X_z)_{z\in U}$ is called an \emph{analytic family} of Banach spaces on $U$, and a function
$f_{x,z}\in\mathscr F$ such that $f_{x,z}(z)=x$ and $\|f_{x,z}\|_{\mathscr F}\leq c\|x\|_z$ is called
a $c$-\emph{extremal} (for $x$ at $z$).
\smallskip

There are different ways of generating Kalton spaces, and here is where complex interpolation enters the game.
We shall mainly consider two cases: $U=\mathbb S$, more suitable to handle interpolation pairs \cite{BL},
and $U=\mathbb D$, more suitable for interpolating families \cite{Coifman1982}.
The analytic families of Banach spaces $(X_z)_{z\in U}$ generated via complex interpolation satisfy the
\emph{interpolation property:} whenever an operator $T:\Sigma\to \Sigma$ induces a norm one operator
$T: X_\omega \to X_\omega$ for all $\omega\in \partial U$ then it also induces a continuous operator
$T: X_z\to X_z$ for all $z\in U$ with some control on its norm.
\smallskip

\emph{Complex interpolation for pairs.}
An \emph{interpolation pair} $(X_0, X_1)$ is a pair of Banach spaces, both of them linear and
continuously contained in a bigger Hausdorff topological vector space $\Sigma$ which can be
assumed  to be $\Sigma = X_0+X_1$ endowed with the norm
$\|x\|= \inf \{\|x_0\|_0 + \|x_1\|_1: x= x_0 + x_1\; x_j\in X_j \; \mathrm{for}\; j=0,1\}$.
The pair will be called {\em regular} if, additionally, $\Delta=X_0\cap X_1$ is dense in both $X_0$
and $X_1$.
The space $\Delta$ endowed with the norm $\|x\|_{\Delta} = \max\{\|x\|_{X_0}, \|x\|_{X_1}\}$ is a
Banach space, and the inclusions $\Delta\to X_i\to\Sigma$ are contractions.

The \emph{Calderon space} $\mathscr{C}=\mathscr C(\mathbb{S}, X_0+X_1)$ is formed by those bounded
continuous functions $F:\overline{\mathbb{S}} \to X_0+X_1$ which are analytic on $\mathbb{S}$ and
such that the maps $t \mapsto F(k + ti) \in X_k$ are continuous and bounded, $k=0,1$.
Endowed with the norm $\|F\|_{\mathscr C}=\sup\{\|F(k+ti)\|_{X_k}: t\in\R, k=0,1\}<\infty$,
$\mathscr{C}$ is a Kalton space and the analytic family $(X_z)_{z\in \mathbb S}$ satisfies the
interpolation property.

An alternative description is given in \cite{daher}: Let $P^{\mathbb{S}}$ be the Poisson kernel on
$\partial \mathbb{S}$. Denoting $\overline X = (X_0,X_1)$, we consider the space
$\mathscr{F}^\infty(\overline{X})\equiv\mathscr{F}^\infty(\mathbb{S},\Sigma)$ of all functions
$F: \overline{\mathbb{S}} \to \Sigma$ analytic on $\mathbb{S}$ such that $F(j+it)\in X_j$
for $j=0,1$ and $t\in\mathbb{R}$, the maps $f_j:t\in\mathbb{R}\to F(j+it)\in X_j$ ($j=0,1$) are
Bochner measurable, $F$ has a Poisson representation
\[
F(z) = \int_{\partial \mathbb{S}} F(w) P^{\mathbb{S}}(w) dw
\]
and
$\|F\|_{\mathscr{F}^\infty (\overline{X})} = \max_{j=0,1}\|f_j\|_{L_\infty(\mathbb{R},X_j)}<\infty$.

It is not difficult to check that the space $\mathscr{F}^\infty(\overline{X})$ endowed with the norm
$\|\cdot\|_{\mathscr{F}^\infty(\overline{X})}$ is a Kalton space of analytic functions on $\mathbb{S}$.
Moreover, for $0<\theta<1$, the associated spaces $X_\theta$ coincide (with equality of norms) with the
spaces obtained using the previous description \cite[p. 288]{daher} via the Calderon space $\mathscr C$. The same is true if instead of $\mathscr{F}^{\infty}(\overline{X})$ we use the spaces $\mathscr{F}^p(\overline{X})$ defined similarly, $p \in [1, \infty)$.
\smallskip

\emph{Complex interpolation for families.}
Here we describe the interpolation method in \cite{Coifman1982} with some slight modifications presented
in \cite{correa}.
We consider an \emph{interpolation family} $(X_\omega)_{\omega \in \T}$, for which we assume that
each space $X_w$ is continuously embedded in a Banach space $\Sigma$, the \emph{containing space},
and that there is a subspace $\Delta \subset \cap_{w \in\T} X_w$, the \emph{intersection space},
such that for every $x \in \Delta$ the function $w \mapsto \|x\|_\omega$ is measurable and satisfies
$\int_0^{2\pi} \log^+ \|x\|_{e^{it}} dt < \infty$.
We also suppose that there is a measurable function $k : [0, 2\pi) \rightarrow [0, \infty)$ satisfying
$\int_0^{2\pi} \log^+ k(t) dt < \infty$ and such that $\|x\|_{\Sigma} \leq k(t)\|x\|_{e^{it}}$
for every $x \in \Delta$ and every $t \in [0, 2\pi)$.

We denote by $\mathcal{G}_0$ the space of all analytic functions on $\mathbb{D}$ of the form
$g =\sum_{j=1}^n\psi_j x_j$, with $\psi_j$ in the Smirnov class $N^+$ \cite{duren} and $x_j \in \Delta$,
such that $\|g\| = \esssup_{\omega \in \T} \|g(\omega)\|_\omega < \infty$.
Moreover $\mathcal{G}$ is the completion of $\mathcal{G}_0$.

For each $z_0\in \mathbb{D}$ we define two spaces.
The first one is $X_{\{z_0\}}$, the completion of $\Delta$ with respect to the norm
$\|x\|_{\{z_0\}} = \inf\{\|g\| : g \in \mathcal{G}_0, g(z_0) = x\}$, and the second one is
$X_{[z_0]} = \{f(z_0) : f \in \mathcal{G}\}$ endowed with the natural quotient norm.
By \cite[Proposition 1.5]{correa} $X_{\{z_0\}} = X_{[z_0]}$ isometrically for every $z_0\in \mathbb{D}$
when $\mathcal{G}\equiv \mathcal{G}(\mathbb{D},\Sigma)$ is a Kalton space.
Moreover the associated analytic family $(X_z)_{z\in \mathbb D}$ satisfies the interpolation property.
\smallskip


Given $z\in \mathbb{D}$, the Poisson kernel $P_z(\omega)$ on $\T$ (see \cite[Section 1]{Coifman1982})
provides the harmonic measure $d\mu_z(\omega) =  P_z(\omega) d\omega$ on $\T$,
and each function $\alpha$ on $\T$ which is integrable with respect to $d\mu_z$ can be extended to an
harmonic function on $\overline{\mathbb{D}}$ by the formula:
$$
\alpha(z) = \int_{\T} \alpha(\omega) P_z(\omega) d\omega.
$$
The harmonic conjugate $\tilde{\alpha}$ of $\alpha$ with $\tilde{\alpha}(0) = 0$ is given by
$\tilde{\alpha}(z) = \int_{\T} \alpha(\omega) \tilde{P_z}(\omega) d\omega$, where $\tilde{P_z}(\omega)$
is the conjugate Poisson kernel.
Next we state the reiteration theorem  for later use.

\begin{theorem} \cite[Theorem 5.1]{Coifman1982}\label{interpcoupfam}
Let $(X_0, X_1)$ be an interpolation pair of Banach spaces, let $\alpha:\T\to [0,1]$ be a
measurable function, and let $X_\omega = (X_0, X_1)_{\alpha(\omega)}$  for $\omega\in \T$.
Then $\{X_\omega\}_{\omega\in \T}$ is an interpolation family and $X_{[z]} = (X_0, X_1)_{\alpha(z)}$
for each $z\in \mathbb D$, with equality of norms.
Moreover, if both $\inf_{\omega\in\T} \alpha(\omega)$ and $\sup_{\omega\in\T} \alpha(\omega)$
are attained, then $X_{\{z\}} = X_{[z]}$.
\end{theorem}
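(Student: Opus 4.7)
The plan is to first verify that $(X_\omega)_{\omega\in\T}$ is an interpolation family in the sense of Section 3, then establish the norm identity $X_{[z]} = (X_0,X_1)_{\alpha(z)}$ by proving both inequalities, and finally deduce the coincidence $X_{\{z\}} = X_{[z]}$ under the attainment hypothesis. The family structure is routine: take the containing space $\Sigma = X_0 + X_1$ and the intersection space $\Delta = X_0 \cap X_1$. For $x \in \Delta$, log-convexity of the complex method gives $\|x\|_{\alpha(\omega)} \leq \max(\|x\|_0, \|x\|_1)$, so the log-integrability condition is trivial; measurability of $\omega\mapsto\|x\|_{\alpha(\omega)}$ follows from continuity of $\theta\mapsto\|x\|_\theta$ composed with measurable $\alpha$; and the uniform bound $\|x\|_\Sigma \leq \|x\|_\theta$ valid for all $\theta\in[0,1]$ lets one take $k$ constant.

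For the inequality $\|x\|_{X_{[z]}} \leq \|x\|_{(X_0,X_1)_{\alpha(z)}}$ I would use a harmonic-conjugate composition trick. Write $\tilde\alpha$ for the harmonic conjugate of (the Poisson extension of) $\alpha$ with $\tilde\alpha(0)=0$, and set $\varphi(\zeta) = \alpha(\zeta) + i(\tilde\alpha(\zeta) - \tilde\alpha(z))$, an analytic map $\mathbb{D}\to\overline{\mathbb{S}}$ sending $z$ to $\alpha(z)$, whose image lies in the open strip whenever $\alpha$ is non-constant. Given an almost extremal $F\in\mathscr{F}^\infty(\overline X)$ with $F(\alpha(z))=x$, define $g = F\circ\varphi$. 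Then $g$ is analytic, $g(z) = x$, and at almost every boundary point $\omega \in \T$ the three-lines principle for the Calderon space gives $\|g(\omega)\|_{X_{\alpha(\omega)}} = \|F(\alpha(\omega)+it_\omega)\|_{(X_0,X_1)_{\alpha(\omega)}} \leq \|F\|_{\mathscr{F}^\infty}$. A density argument (approximating $F$ by finite sums of elementary tensors in $\Delta$) realises $g$ as an element of $\mathcal{G}$, so $\|x\|_{X_{[z]}} \leq \|F\|_{\mathscr{F}^\infty}$, and taking the infimum over $F$ delivers the bound.

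The reverse inequality is where I expect the main obstacle, since $\varphi$ is analytic but not conformal and cannot simply be inverted. The cleanest route is by duality: by Hahn--Banach, $\|x\|_{(X_0,X_1)_{\alpha(z)}} = \sup|x^*(x)|$ over unit vectors $x^*\in((X_0,X_1)_{\alpha(z)})^*$; each such $x^*$ (on the dense subspace $\Delta$) is represented by an almost extremal function $G$ on $\overline{\mathbb S}$ with boundary values in $X_j^*$ and $G(\alpha(z)) = x^*$. Pairing a near-extremal $g\in\mathcal{G}$ with $G\circ\varphi$ produces a scalar bounded analytic function $h$ on $\mathbb{D}$ whose boundary modulus satisfies $|h(\omega)| \leq \|g(\omega)\|_{X_{\alpha(\omega)}}\cdot\|G\|$ and whose value at $z$ equals $x^*(x)$; the Poisson integral representation of $h(z)$ against harmonic measure yields $|x^*(x)|\leq\|g\|\cdot\|G\|$, and taking suprema gives the desired inequality. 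An equivalent abstract argument applies the interpolation property of both methods to the identity operator on $\Sigma$, combined with the fact that at each $\omega$ the two norms on $X_\omega$ agree by construction.

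For the final assertion, by \cite[Proposition 1.5]{correa} it is enough to check that $\mathcal{G}(\mathbb{D},\Sigma)$ is a Kalton space: bounded evaluations at $z\in\mathbb D$ follow from the Poisson representation combined with the uniform estimate $\|x\|_\Sigma\leq k(t)\|x\|_{e^{it}}$, while the conformal (M\"obius) invariance condition is built into the definition through the essential supremum over $\T$ and the invariance of harmonic measure. The hypothesis that $\inf\alpha$ and $\sup\alpha$ are attained ensures that the extreme interpolation spaces actually occur as some $X_\omega$, which is exactly what is needed to obtain density of $\Delta$ in each $X_\omega$ and hence in $X_{[z]}$, closing the identification $X_{\{z\}} = X_{[z]}$.
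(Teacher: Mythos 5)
This theorem is cited from Coifman--Cwikel--Rochberg--Sagher--Weiss and is not proved in the present paper, so there is no internal argument to compare yours against; I can only assess your outline directly.

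The heart of your argument --- composing a Calderon extremal $F$ for $x$ at $\alpha(z)$ with the analytic map $\varphi=\alpha+i(\tilde\alpha-\tilde\alpha(z))$, which sends $\mathbb D$ into $\mathbb S$ and $z$ to $\alpha(z)$ --- is exactly the device used by CCRSW, and the paper itself alludes to it in the proof of Theorem \ref{interpcoupfam2} (``Included in the proof of \cite[Theorem 5.1]{Coifman1982} is the fact that $g=f\circ w$ is an extremal\ldots''). Your forward inequality $\|x\|_{X_{[z]}}\le\|x\|_{(X_0,X_1)_{\alpha(z)}}$ is on the right lines, including the need to approximate $F$ by $\Delta$-valued sums via \cite[4.2.3. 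Lemma]{BL} so that $F\circ\varphi$ lands in $\mathcal G$, and the observation that $\varphi(\mathbb D)\subset\mathbb S$ for non-constant $\alpha$ (strong maximum principle).

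Two points are weaker. For the reverse inequality, duality is indeed the correct route, but the parenthetical ``equivalent abstract argument applying the interpolation property of both methods to the identity operator'' is not an argument: applying the interpolation property to the identity is vacuous and provides no comparison between the CCRSW norm at $z$ and the Calderon norm at $\alpha(z)$. Moreover, in the stated generality (no regularity, reflexivity or separability) the claim that every unit functional $x^*\in\big((X_0,X_1)_{\alpha(z)}\big)^*$ is represented by an almost-extremal $G$ in a Calderon space for $(X_0^*,X_1^*)$ relies on a duality theorem for the complex method that requires additional hypotheses; the analyticity and boundary estimate for $h(\zeta)=\langle g(\zeta),G(\varphi(\zeta))\rangle$ therefore need more care than the sketch suggests. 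Second, the final paragraph misidentifies the role of the attainment hypothesis. Density of $\Delta$ in each $X_\omega$ is not what is at stake: for non-constant $\alpha$ the Poisson extension satisfies $\alpha(z)\in(0,1)$ on $\mathbb D$, and $\Delta$ is dense in $(X_0,X_1)_{\alpha(z)}$ by the very definition of the Calderon lower method, with no hypothesis on $\alpha$ whatsoever. The attainment condition is what allows CCRSW to pass from $\mathcal G_0$-extremals to $\mathcal G$-extremals, i.e.\ to show that $\|\cdot\|_{\{z\}}$ and $\|\cdot\|_{[z]}$ agree on $\Delta$. Invoking \cite[Proposition 1.5]{correa} is the right instinct, but your paragraph neither establishes the required Kalton-space property of $\mathcal G$ nor ties it to the attainment hypothesis --- as written it asserts the conclusion rather than deriving it.
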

\smallskip

\emph{Complex interpolation for admissible families of K\"othe spaces.}
In \cite{kaltdiff} Kalton considers a variation of the complex interpolation method in \cite{Coifman1982}
for families of K\"othe function spaces of $\mu$-measurable functions, where $\mu$ is a $\sigma$-finite
Borel measure on a Polish space.
For $U=\mathbb D$, he defines the notion of \emph{admissible family} of K\"othe function
spaces $\{X_\omega\}_{\omega \in \T}$, for which there exist two strictly positive $h,k\in L_0(\mu)$ such
that given $x\in L_0(\mu)$, we have $\|xh\|_1\leq \|x\|_\omega\leq \|xk\|_\infty$ for every
$w\in \T$.
The family is \emph{strongly admissible} if, additionally, there exists a countable dimensional
subspace $V$ of $L_0(\mu)$ such that $V \cap B_{X_\omega}$ is $L_0(\mu)$-dense in $B_{X_\omega}$
for a.\ e.\ $\omega \in \T$.
These conditions hold in most reasonable situations. For example, if the family is finite then it is strongly admissible.
We refer to \cite{kaltdiff} for the details.

Given an admissible family $\{X_\omega\}_{\omega \in \T}$ of K\"othe spaces, the role of Kalton space is
played by the space $\mathcal{N}^+(\mathbb D)$ of functions $f : \mathbb D \rightarrow L_0(\mu)$ such that
\begin{itemize}
\item for $\mu$-almost every $s$, the function $F_s : \mathbb{D} \rightarrow \mathbb{C}$ defined
by $F_s(z) = f(z)(s)$ belongs to the Smirnov class $N^+$ (see Section \ref{sect:admissible}) for every
$z \in \mathbb{D}$;
\item $\|f\| = \esssup_{w \in \mathbb{T}} \|f(w)\|_w < \infty$, where $f(w)$ is the radial limit, in the
$L_0(\mu)$ topology, of $f(z)$ with $z \rightarrow w$ (which exists by Fubini's theorem).
\end{itemize}

The definition of $\mathcal{N}^+(\mathbb D)$ in \cite{kaltdiff} does not include the condition
$\|f\| = \esssup_{w \in \mathbb{T}} \|f(w)\|_w < \infty$,
%
%
but we will need it.
This amendment is harmless since \cite[Proposition 2.4]{kaltdiff} asserts the existence of extremals
in our space, which means that the new space $\mathcal{N}^+(\mathbb D)$ yields the same spaces $X_z$.

\begin{remark}
By \cite[Lemma 2.2]{kaltdiff}, each $f\in \mathcal{N}^+(\mathbb D)$ belongs to the Hardy space
$H^1(L_1(hd\mu))$, hence $\mathcal{N}^+(\mathbb D)$ consists of analytic functions
$f: \mathbb D \to L_1(hd\mu)$, the norms of the evaluation maps are at most 1, and multiplying by a
conformal map is an isometric map.
Moreover the arguments in \cite{Coifman1982} allow us to show that $\mathcal{N}^+(\mathbb D)$ is closed
in $H^1(L_1(hd\mu))$.
Thus $\mathcal{N}^+(\mathbb D)$ satisfies the conditions in Definition \ref{def:admissible} with
$\Sigma=L_1(hd\mu)$.
\end{remark}

\begin{remark}
This method may be transposed from the disk to the strip by means of a conformal map, and it agrees with the method of complex interpolation of couples when we are dealing with a family of two K\"othe spaces distributed in two arcs.
\end{remark}


We also need to recall from \cite{kaltdiff} the notions of \emph{semi-ideal} and \emph{indicator function}.

\begin{defin}
A \emph{semi-ideal} is a cone $\mathcal{I} \subset L_1^+$ such that $g \in \mathcal{I}$ and $0 \leq f \leq g$
imply $f \in \mathcal{I}$.
A \emph{strict semi-ideal} is a semi-ideal which contains a strictly positive element.
\end{defin}

Given a K\"othe function space $X$, we consider the semi-ideal $\mathcal{I}_X$ of all $f \in L_1^+$
such that
$\sup_{x \in B_X} \int f \log^+ \left|x\right| d\mu < \infty$ and
there is $x \in B_X$ such that $\int f\left|\log\left|x\right|\right| d\mu < \infty$.
\smallskip

The \emph{indicator of $X$} is the map  $\Phi_X: \mathcal{I}_X \to \R$ given by
$\Phi_X(f) = \sup\limits_{x \in B_X} \int_{S} f \log \left|x\right| d\mu$.
\smallskip

We will need the following result:

\begin{theorem} \cite[Theorem 4.7]{kaltdiff}\label{thm-interpolated-indicator}
Given a strongly admissible family $\{X_\omega\}_{\omega \in \T}$, there is a strict semi-ideal
$\mathcal{I}$ such that for each $z_0 \in \mathbb{D}$ and $f \in \mathcal{I}$, we have
$\mathcal{I} \subset \mathcal{I}_{X_{z_0}}$, the map $t \mapsto \Phi_{X_{e^{it}}}(f)$ is a bounded
and measurable, and
$$
\Phi_{X_{z_0}}(f) = \frac{1}{2\pi} \int_{0}^{2\pi} \Phi_{X_{e^{it}}}(f) P_{z_0}(e^{it}) dt.
$$
\end{theorem}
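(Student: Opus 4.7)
The plan is to prove the Poisson integral identity as two opposite inequalities, using extremals of $\mathcal{N}^+(\mathbb{D})$ for the upper bound and a pointwise outer-function construction for the lower bound; the strong admissibility hypothesis is what makes both the choice of semi-ideal and the required measurable selections go through. Concretely, I would build $\mathcal{I}$ from the dominating functions $h, k\in L_0(\mu)$ provided by admissibility: the inclusion $B_{X_\omega}\subset B_{L_1(hd\mu)}$ together with the element $1/k\in B_{X_\omega}$ lets one take $\mathcal{I}$ to be the strictly positive $f\in L_1^+$ with $f/h\in L_\infty$ and $f\log^+ k\in L_1$, which is a strict semi-ideal and satisfies $\mathcal{I}\subset \mathcal{I}_{X_\omega}$ uniformly in $\omega$, with $\sup_{\omega\in\T}\Phi_{X_\omega}(f)<\infty$. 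For measurability of $t\mapsto \Phi_{X_{e^{it}}}(f)$, the strong admissibility supplies a countable dense $V\subset L_0(\mu)$, so the sup defining $\Phi_{X_\omega}(f)$ can be restricted to the countable set $V\cap B_{X_\omega}$; since each $\omega\mapsto \|v\|_\omega$ is measurable, the indicator $\mathbf{1}_{\|v\|_\omega\leq 1}\int f\log|v|\,d\mu$ is measurable in $\omega$, and the sup over the countable family $V$ gives the required measurability and boundedness.

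For the upper bound, fix $x\in B_{X_{z_0}}$ and apply Proposition 2.4 of \cite{kaltdiff} to obtain $F\in\mathcal{N}^+(\mathbb{D})$ with $F(z_0)=x$ and $\|F\|\leq 1$. For $\mu$-a.e.\ $s$ the scalar slice $F_s(z)=F(z)(s)$ lies in the Smirnov class $N^+$, so Jensen's inequality gives $\log|F(z_0)(s)|\leq \frac{1}{2\pi}\int_0^{2\pi}\log|F(e^{it})(s)|P_{z_0}(e^{it})\,dt$. Multiplying by $f(s)$, integrating, and switching order by Fubini (justified by uniform integrability on $\mathcal{I}$), one obtains
$$
\int f\log|x|\,d\mu \leq \frac{1}{2\pi}\int_0^{2\pi}\left(\int f\log|F(e^{it})|\,d\mu\right) P_{z_0}(e^{it})\,dt \leq \frac{1}{2\pi}\int_0^{2\pi}\Phi_{X_{e^{it}}}(f)\,P_{z_0}(e^{it})\,dt.
$$
Taking the sup over $x$ yields the desired upper bound for $\Phi_{X_{z_0}}(f)$, and in particular shows $\mathcal{I}\subset \mathcal{I}_{X_{z_0}}$.

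The lower bound is the main obstacle and requires an explicit construction. For $\varepsilon>0$, enumerate $V=\{v_n\}$ and set $x^*_\omega$ to be the first $v_n\in V\cap B_{X_\omega}$ with $\int f\log|v_n|\,d\mu \geq \Phi_{X_\omega}(f)-\varepsilon$; measurability of $\omega\mapsto x^*_\omega$ follows from measurability of each $\omega\mapsto \|v_n\|_\omega$. Then define $F:\mathbb{D}\to L_0(\mu)$ by letting, for $\mu$-a.e.\ $s$, $F_s(z)=F(z)(s)$ be the outer function whose boundary modulus is $|x^*_{e^{it}}(s)|$; then $\log|F(z_0)(s)|$ is the Poisson extension of $\log|x^*_{e^{it}}(s)|$. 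Verifying $F\in\mathcal{N}^+(\mathbb{D})$ with $\|F\|\leq 1$ requires three ingredients: Smirnov class membership of each $F_s$ (from the semi-ideal control on $\log^+ k$), $L_0$-valued analyticity of $z\mapsto F(z)$ (from the countable structure inherited from $V$, via the Hardy space embedding of \cite[Lemma 2.2]{kaltdiff}), and the boundary identity $|F(e^{it})|=|x^*_{e^{it}}|$ a.e., which gives $\|F(e^{it})\|_{e^{it}}\leq 1$. Once this is done, $F(z_0)\in B_{X_{z_0}}$ and Fubini yields
$$
\Phi_{X_{z_0}}(f) \geq \int f\log|F(z_0)|\,d\mu = \frac{1}{2\pi}\int_0^{2\pi}\left(\int f\log|x^*_{e^{it}}|\,d\mu\right) P_{z_0}(e^{it})\,dt \geq \frac{1}{2\pi}\int_0^{2\pi}\Phi_{X_{e^{it}}}(f)\,P_{z_0}(e^{it})\,dt - \varepsilon,
$$
and letting $\varepsilon\to 0$ closes the gap. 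The hardest technical point is the joint verification of $L_0$-valued analyticity of $z\mapsto F(z)$ and of the boundary identity in the $L_0$ topology; both rest squarely on the countable dense subspace supplied by the strong admissibility hypothesis, which is precisely why this hypothesis (and not mere admissibility) enters the statement.
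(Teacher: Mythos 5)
This statement is quoted from Kalton (\cite[Theorem 4.7]{kaltdiff}); the paper itself offers no proof, so there is nothing internal to compare against. Your two-inequality architecture --- Jensen/harmonic majorization applied to the Smirnov-class slices of an extremal for ``$\leq$'', and an outer function with prescribed boundary modulus built from near-optimizers for ``$\geq$'' --- is the standard and essentially correct route, and it is in the same spirit as the machinery the paper itself invokes in the proof of Proposition \ref{thm-centralizer} (the function $\phi$ with $|f(s)|=\exp\bigl(\int_{\T}P_{z_0}(\omega)\log\phi(s,\omega)\,d\omega\bigr)$ is exactly such an outer-function representation).

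Two points you treat as routine are genuine gaps. First, you assert that the supremum defining $\Phi_{X_\omega}(f)$ may be computed over the countable set $V\cap B_{X_\omega}$; this is needed both for measurability and for your measurable selection $\omega\mapsto x^*_\omega$, but it is not automatic, because $x\mapsto\int f\log|x|\,d\mu$ is neither continuous nor upper semicontinuous for the $L_0$ topology without further input. One must combine a.e.\ convergence along subsequences with the uniform bound $\sup_{x\in B_{X_\omega}}\int f\log^+|x|\,d\mu<\infty$ (uniform integrability of the positive parts) and Fatou on the negative parts, and even then one only gets that the countable sup is not smaller than the true sup after replacing approximants $v_n$ by $\max(|v_n|,\delta|x_0|)$ for a fixed good element $x_0$; this step deserves a proof, not an assertion. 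Second, your outer-function construction degenerates wherever $\log|x^*_{e^{it}}(s)|\notin L^1(dt)$ --- in particular if $x^*_{e^{it}}(s)=0$ on a $t$-set of positive measure, the outer function $F_s$ is identically zero and the Fubini identity for the lower bound reads $-\infty\geq-\infty$, which proves nothing. The fix is again to lift the selection away from zero, e.g.\ replace $|x^*_\omega|$ by $\max(|x^*_\omega|,\delta/k)$, which stays in $(1+\delta)B_{X_\omega}$ by admissibility and has $\log$ locally integrable by the semi-ideal conditions; without some such modification the lower-bound half of your argument does not close. With these two repairs the proof is sound.
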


The core of Kalton's method is that centralizers on a separable K\"othe space $X$ actually live
on $L_1(\mu)$. More precisely, given a centralizer $\Omega$ on $X$, then $L_1 = X X^*$ by
Lozanovskii's factorization \cite{loza}.
Thus each $f\in L_1$ can be written as $f=x x^*$ with $\|x\|\|x^*\|\leq 2\|f\|$, and one can set
$$
\Omega^{[1]}(f)= \Omega(x)x^*.
$$
This is a centralizer on $L_1$ that, whenever $f= y y^*$ with $y\in X, y^*\in X^*$, it satisfies
$$
\|\Omega^{[1]}(f) - \Omega(y)y^*\|_{L_1} \leq C \|y\|\|y^*\|
$$
for some uniform constant $C>0$. See \cite[Theorem 5.1]{kaltmem} for details.
When $X=L_p$ ($1<p<\infty$), $\Omega^{[1]}(f) = u| f |^{1/q}\Omega(| f |^{1/p})$, where $u|f|$ is
the polar decomposition of $f$ and $p^{-1} + q^{-1}=1$.
\smallskip

Given a centralizer $\Omega$ on a K\"othe space $X$, Kalton considers the strict ideal
$\mathcal{I}_{\Omega} \subset L_1$ of those elements $f\in L_1$ for which $\Omega^{[1]}(f)\in L_1$,
and define on $\mathcal{I}_{\Omega}$ the functional
$$
\Phi^\Omega (f) = \int \Omega^{[1]}(f) d\mu.
$$
The crucial properties of this functional are established in the next result:

\begin{theorem}\label{thm-indicator-of-centralizer} \cite[Proposition 7.4]{kaltdiff}
\begin{enumerate}
\item Let $(X_0, X_1)$ be an interpolation couple of K\"othe spaces and let $\Omega_\theta$ be
the derivation map associated with $X_\theta$.
Then on a suitable semi-ideal one has $\Phi^{\Omega_\theta} = \Phi_{X_0} - \Phi_{X_1}$.
\item Let $\{X_\omega\}_{\omega \in \T}$ be a strongly admissible family.
If $\Omega$ is the centralizer associated to $X_z$ for $z=0$, then on a suitable strict semi-ideal
$\mathcal{I} \subset \mathcal{I}_{\Omega}$ one has that for every $f \in \mathcal{I}$
$$
\Phi^{\Omega}(f) = \frac{1}{2\pi} \int_{-\pi}^{\pi} e^{-it} \Phi_{X_{e^{it}}}(f) dt.
$$
\end{enumerate}
\end{theorem}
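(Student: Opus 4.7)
The plan is to combine Theorem \ref{thm-interpolated-indicator} with a variational analysis of the extremals attached to the sup defining $\Phi_X(f)$. Fix $f$ in a suitable strict semi-ideal $\mathcal I \subset \mathcal I_\Omega$. For part (2), let $x \in B_{X_0}$ nearly attain $\Phi_{X_0}(f) = \sup\{\int f\log|y|\, d\mu : y \in B_{X_0}\}$, and lift $x$ to a norm-one extremal $F \in \mathcal N^+(\mathbb D)$ with $F(0) = x$, so that $\Omega(x) = F'(0)$ by definition. Since $F(e^{it}) \in B_{X_{e^{it}}}$ for a.e.\ $t$, we have $\int f \log|F(e^{it})|\, d\mu \leq \Phi_{X_{e^{it}}}(f)$; combining this with Jensen's inequality applied to the subharmonic function $z \mapsto \log|F(z)(s)|$ and with Theorem \ref{thm-interpolated-indicator} at $z=0$, every inequality in the resulting chain must be (near-)saturated. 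This forces $F(\cdot)(s)$ to be essentially outer on $\mathbb D$ for $\mu$-a.e.\ $s$ in the support of $f$, and forces $\int f \log|F(e^{it})|\, d\mu = \Phi_{X_{e^{it}}}(f)$ for a.e.\ $t$.

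At such $s$ the holomorphic branch $G_s(z) = \log F(z)(s)$ has real boundary values $\log|F(e^{it})(s)|$, and its first Taylor coefficient is recovered by the Schwarz integral,
\[
\frac{F'(0)(s)}{F(0)(s)} \;=\; G_s'(0) \;=\; \frac{1}{\pi}\int_{-\pi}^{\pi} \log|F(e^{it})(s)|\, e^{-it}\, dt.
\]
Multiplying through by $f(s) = F(0)(s)\cdot x^*(s)$, where $f = x x^*$ is the Lozanovskii factorization underlying $\Omega^{[1]}$, integrating over $s$, and swapping integrals,
\[
\int \Omega(x)\, x^*\, d\mu \;=\; \frac{1}{\pi}\int_{-\pi}^{\pi} e^{-it}\Big(\int f(s)\log|F(e^{it})(s)|\, d\mu(s)\Big)\, dt.
\]
The inner integral equals $\Phi_{X_{e^{it}}}(f)$ for a.e.\ $t$ by the equality case established above, producing the identity in (2) up to the constant linking the Schwarz integral with the normalization of $\Omega$ and the Poisson kernel used in Theorem \ref{thm-interpolated-indicator}.

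Part (1) is handled by the same strategy transposed to the strip $\mathbb S$: Theorem \ref{thm-interpolated-indicator} reduces there to the affine identity $\Phi_{X_\theta}(f) = (1-\theta)\Phi_{X_0}(f) + \theta\Phi_{X_1}(f)$, whose derivative in $\theta$ equals $\Phi_{X_1}(f) - \Phi_{X_0}(f)$ identically. On the other hand, an envelope argument applied to the sup defining $\Phi_{X_\theta}(f)$ identifies this derivative with $\int \Omega_\theta(x_\theta)\, x_\theta^*\, d\mu = \Phi^{\Omega_\theta}(f)$ at the extremal $x_\theta$, yielding the stated equality up to sign. The principal technical obstacles are: (i) the sup defining $\Phi_X(f)$ need not be attained, so the envelope / equality-in-Jensen argument has to be run with $\varepsilon$-extremizers and passed to the limit, exploiting strong admissibility in the family case; (ii) the semi-ideal $\mathcal I$ must be chosen small enough that all the joint integrals converge and Fubini applies; (iii) the zeros of $F(z)(s)$, where the branch $G_s$ is singular, have to be factored out via a Blaschke product so that the outer-like computation carries through on the support of $f$.
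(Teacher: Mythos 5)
The paper never proves this theorem: it is quoted from Kalton \cite[Proposition 7.4]{kaltdiff}, so there is no internal proof against which your attempt can be checked. Your outline is nonetheless along what is very likely the right route --- outer-function/Schwarz representation of $F'(0)/F(0)$ for an extremal, saturation of Jensen's inequality to push $\int f\log|F(e^{it})|\,d\mu$ up to $\Phi_{X_{e^{it}}}(f)$, and Theorem \ref{thm-interpolated-indicator} to close the loop.

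The genuine gap is that your derivation produces formulas that are \emph{not} the stated ones --- $\frac{1}{\pi}$ in place of $\frac{1}{2\pi}$ in (2), and $\Phi_{X_1}-\Phi_{X_0}$ in place of $\Phi_{X_0}-\Phi_{X_1}$ in (1) --- and you pass over this as ``the constant linking the Schwarz integral with the normalization'' and ``up to sign''. A proof of an identity must yield the identity, constants and all. To see the issue concretely, take the one-atom family $X_\omega=L_2(w_\omega)$: then $\Phi_{X_\omega}(f)=-\tfrac{f}{2}\log w_\omega$, the extremal is $F(z)=x\,e^{W(z)}$ with $\textrm{Re}\,W(e^{it})=\tfrac{1}{2}\log(w_0/w_{e^{it}})$, so $\Phi^{\Omega}(f)=fW'(0)=-\tfrac{f}{2\pi}\int e^{-it}\log w_{e^{it}}\,dt$, which is exactly twice the right-hand side of the displayed formula; and with the paper's own expressions (\ref{kpgeneralized}) or Proposition \ref{twist-weights}, $\Phi^{\Omega_\theta}$ works out to $\Phi_{X_1}-\Phi_{X_0}$, the negative of (1). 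So either the statement as transcribed carries sign/constant typos --- a real possibility, given that $\beta_j=\tfrac{1}{2\pi}\int_{A_j}e^{-it}\,dt$ in Lemma \ref{lem-li} differs by a factor of $2$ from the $\psi_j'(0)$ implicit in Proposition \ref{thm-centralizer} and used in Lemma \ref{preparatory} --- or Kalton's conventions in \cite{kaltdiff} are not the ones in force here; you must pin this down, not shrug it off. Beyond that, the three obstacles you list yourself (passing from averaged near-equality in the Jensen chain to near-equality for a.e.\ $t$, removing the Blaschke factor before the Schwarz representation applies, and replacing the Lozanovskii factorization in $\Phi^{\Omega}(f)=\int\Omega^{[1]}(f)\,d\mu$ by your near-extremizer via Kalton's $L_1$-stability of $\Omega^{[1]}$) are precisely where the work lies in \cite{kaltdiff}; naming them without carrying them out leaves this a sketch rather than a proof.
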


\subsection{Derivations, centralizers and twisted sums}

Given a Kalton space $\mathscr{F}(U, \Sigma)$ and $z\in U$, the evaluation map
$\delta'_z:f\in\mathscr F\to f'(z)\in\Sigma $ of the derivative at $z$ is bounded for all $z\in U$
(see Lemma \ref{deltas-lemma} for a precise estimate of its norm).
We also need the following well-known fact, for which we present a proof for the sake of later use.

\begin{prop}\label{aux-prop}
For each $z\in U$, the map $\delta_z'$ is continuous and surjective from $\ker \delta_z$ to $X_z$.
\end{prop}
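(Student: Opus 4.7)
The plan is to treat the two assertions separately. Continuity of $\delta'_z$ can either be cited from the forthcoming Lemma \ref{deltas-lemma} or be proved directly: for any closed disc $\overline{D}(z,r) \subset U$, Cauchy's integral formula yields
$$
f'(z) = \frac{1}{2\pi i}\oint_{|w-z|=r} \frac{f(w)}{(w-z)^2}\, dw,
$$
so that $\|f'(z)\|_{\Sigma} \leq r^{-1} \sup_{|w-z|=r}\|\delta_w\|\,\|f\|_{\mathscr F}$. Since $w\mapsto f(w)$ is norm-continuous for each fixed $f$, the uniform boundedness principle makes $M_r := \sup_{|w-z|=r}\|\delta_w\|$ finite, and $\|\delta'_z\| \leq M_r/r$.

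For surjectivity onto $X_z$, the key is to exploit condition (b) in Definition \ref{def:admissible} with a conformal equivalence $\varphi:U\to\mathbb D$ chosen so that $\varphi(z)=0$. Given $x\in X_z$, pick $f\in\mathscr F$ with $f(z)=x$. By (b), $\varphi\cdot f\in\mathscr F$ with $\|\varphi f\|_{\mathscr F}=\|f\|_{\mathscr F}$. Since $\varphi$ is a conformal equivalence, $\varphi'(z)\neq 0$, so we may define
$$
g \;=\; \frac{1}{\varphi'(z)}\,\varphi\cdot f.
$$
Then $g(z) = \varphi(z)f(z)/\varphi'(z) = 0$, so $g\in\ker\delta_z$; and
$$
g'(z) \;=\; \frac{\varphi'(z)f(z) + \varphi(z)f'(z)}{\varphi'(z)} \;=\; f(z) \;=\; x,
$$
using $\varphi(z)=0$ in the second term. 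This produces the required preimage.

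As a byproduct, if $f$ is chosen $c$-extremal for $x$ at $z$, then $\|g\|_{\mathscr F} \leq c\,\|x\|_z/|\varphi'(z)|$, so $\delta'_z:\ker\delta_z\to X_z$ admits a section with an explicit norm bound depending only on the conformal derivative $\varphi'(z)$ and the extremality constant. This quantitative version will presumably be needed later to define the derivation $\Omega_z$ as $x\mapsto g'(z)$ for an appropriately chosen $g$ that realizes $\delta_z(g)=x$ (rather than $0$), and it is the feature that makes the construction of the derived space $dX_z$ work.

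I do not expect any serious obstacle: the whole argument is driven by the invariance property (b). The one point to verify carefully is that multiplication by the scalar function $\varphi$ is exactly what (b) licenses, and that $\varphi'(z)\neq 0$ is automatic for a conformal equivalence; both are immediate.
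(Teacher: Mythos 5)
Your surjectivity construction is exactly the paper's and is correct. However, there is a genuine gap in the continuity half, and a related omission that makes the surjectivity claim incomplete as stated.

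The proposition asserts that $\delta'_z$ is continuous and surjective \emph{as a map into $X_z$ with its quotient norm}, and implicit in this is the containment $\delta'_z(\ker\delta_z)\subset X_z$. You prove $\delta'_z(\ker\delta_z)\supset X_z$, but never the reverse inclusion. This is the nontrivial step: for $g\in\ker\delta_z$, Riemann's removable singularity theorem lets one write $g=\varphi\cdot f$ with $f:=g/\varphi$ analytic on $U$, and condition (b) of Definition~\ref{def:admissible} then puts $f$ in $\mathscr F$; hence $g'(z)=\varphi'(z)f(z)+\varphi(z)f'(z)=\varphi'(z)f(z)\in X_z$. Without this, "surjective onto $X_z$" is not established (only that the image contains $X_z$), and worse, the map need not even be claimed to take values in $X_z$.

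Your continuity argument via Cauchy's formula and the uniform boundedness principle only bounds $\|\delta'_z:\mathscr F\to\Sigma\|$. But $X_z$ carries the quotient norm from $\mathscr F$, which in general is strictly finer than the $\Sigma$-norm restricted to $X_z$; so a bound into $\Sigma$ does not give continuity into $X_z$. The paper's route is: once $\delta'_z(\ker\delta_z)\subset X_z$ is known, continuity $\ker\delta_z\to X_z$ follows from the closed graph theorem (using the $\Sigma$-continuity you did prove and the continuity of the embedding $X_z\hookrightarrow\Sigma$). Also note that citing Lemma~\ref{deltas-lemma}(2) for continuity would be circular, since its proof uses precisely the factorization argument above, which is the content of the present proposition.

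The quantitative byproduct you note (a bounded section for $\delta'_z$) is correct and matches the later Lemma~\ref{deltas-lemma}(2), but it does not repair the gap: a bounded right inverse bounds $\|g\|_{\mathscr F}$ by $\|x\|_z$, whereas continuity requires bounding $\|g'(z)\|_z$ by $\|g\|_{\mathscr F}$.
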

\begin{proof}
Let $\varphi:U\to \mathbb{D}$ be a conformal equivalence such that $\varphi(z)=0$.
Each $g\in \ker \delta_z$ can be written as $g=\varphi\cdot f$ for some $f\in \mathscr{F}$,
and $g'(z)=\varphi'(z) f(z)\in X_z$, thus $\delta_z'(\ker \delta_z)\subset X_z$ and the continuity
into $X_z$ follows from the closed graph theorem.
Moreover, given $x\in X_z$ and $f\in \mathscr{F}$ with $f(z)=x$,
$g=\varphi(z)^{-1}\varphi\cdot f\in \ker \delta_z$ and $\varphi'(z)=x$, hence
$\delta_z'(\ker \delta_z)= X_z$.
\end{proof}

For each $z\in U$ we consider the space $dX_z = \{(f'(z), f(z)): f\in \mathscr F\}.$
The map $\Delta_z: \mathscr F \to \mathscr{F}$ given by  $\Delta_z(f)= (f'(z), f(z))$ is bounded and
thus $dX_z$ can be endowed with the quotient norm
$\|(a,b)\| = \inf \{\|f\|_{\mathscr F}: f\in \mathscr F, \; f'(z)=a,\; f(z)=b\}$.
The space $dX_z$ admits an exact sequence $0 \to  X_z \to dX_z \to X_z \to 0$ with inclusion
$j_z(x)=(x,0)$ (thanks to Proposition \ref{aux-prop}) and quotient map $q_z(y,x)=x$.
All this yields a commutative diagram:
\begin{equation}\label{full diagram}
\begin{CD}
0 @>>> \ker \delta_z @>>> \mathscr F @>{\delta_z}>> X_z @>>>0\\
&&@V{\delta_z'}VV @VV{\Delta_z}V @|\\
0 @>>> X_z @>>j_z> dX_z @>>q_z> X_z @>>>0
\end{CD}
\end{equation}

Thus we have a method to obtain twisted sums of spaces $X_z$ obtained from a Kalton space $\mathscr{F}$.
The twisted sum space can be described using the so-called \emph{derivation map} given by
$\Omega_z = \delta_z' B_z$, where $B_z: X_z\to \mathscr{F}$ is a homogeneous bounded selection for the
evaluation map $\delta_z:\mathscr{F}\to \Sigma$.
We consider the space
$$
d_{\Omega_z}X_z = \{(y,x)\in \Sigma\times X_z: y- \Omega_zx \in X_z\}
$$
endowed with the quasi-norm $\|(y,x)\| = \|y - \Omega_z x\|_z + \|x\|_z$ so that one has an exact
sequence $0 \to  X_z \to d_{\Omega_z}X_z \to X_z \to 0$ with inclusion $x \to (x,0)$ and quotient
map $(y,x)\to x$.
It is not hard to check \cite{ccs} that this exact sequence is equivalent to the lower row of
(\ref{full diagram}).
Note that different choices of selection $B_z$ lead to different derivations $\Omega_z$, but the
difference between two of these derivations is always a bounded map, so both choices produce isomorphic
derived spaces and equivalent twisted sums.

The derivation map $\Omega_z$ is said to be \emph{trivial} if the associated exact sequence splits.
With the proof of Proposition \ref{trivial-centr} we obtain the following result:

\begin{prop}\label{trivial-deriv}
The derivation map $\Omega_z$ is trivial if and only if there is a linear map $L: X_z\to \Sigma$
such that $\Omega_z - L$  is a bounded map from $X_z$ to $X_z$.
\end{prop}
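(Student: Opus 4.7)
The statement is formally identical to Proposition \ref{trivial-centr}, with the centralizer $\Omega$ replaced by the derivation $\Omega_z$, the ambient space $L_0(\mu)$ replaced by the containing space $\Sigma$, and the twisted sum $X\oplus_\Omega X$ replaced by $d_{\Omega_z}X_z$. So the plan is to transpose the earlier proof line by line, checking that each step remains valid in this more abstract setting. The only things that really need to be verified are (i) that the bookkeeping relating boundedness of $\Omega_z-L$ (into $X_z$) to boundedness of the associated projection/section does not use anything specific to the K\"othe situation, and (ii) that $L$ is legitimately allowed to take values in $\Sigma$ rather than $X_z$.

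For the ``if'' direction, assuming a linear map $L:X_z\to\Sigma$ with $\Omega_z-L$ bounded as a map $X_z\to X_z$, I would define
\[
P:d_{\Omega_z}X_z\longrightarrow d_{\Omega_z}X_z,\qquad P(y,x)=(y-Lx,0).
\]
The decomposition $y-Lx=(y-\Omega_z x)+(\Omega_z x-Lx)$ shows that $y-Lx\in X_z$, since the first summand is in $X_z$ by membership of $(y,x)$ in $d_{\Omega_z}X_z$ and the second is in $X_z$ by hypothesis. Hence the image of $P$ lies in $j_z(X_z)$. Linearity is immediate from linearity of $L$; boundedness follows from
\[
\|P(y,x)\|=\|y-Lx\|_z\leq\|y-\Omega_z x\|_z+\|(\Omega_z-L)x\|_z\leq(1+C)\|(y,x)\|,
\]
where $C$ is the norm of $\Omega_z-L:X_z\to X_z$; and $P\circ j_z=j_z$ is clear. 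So $j_z(X_z)$ is complemented in $d_{\Omega_z}X_z$, i.e.\ the sequence splits.

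For the ``only if'' direction, if $\Omega_z$ is trivial then the lower row of diagram (\ref{full diagram}) splits, so by \cite[Lemma 1.1.a]{castgonz} there is a bounded linear section $S:X_z\to d_{\Omega_z}X_z$ with $q_z S=\mathrm{id}_{X_z}$. Since $q_z$ is projection onto the second coordinate, $S$ must have the form $Sx=(Lx,x)$ for some map $L:X_z\to\Sigma$, and linearity/boundedness of $S$ force $L$ to be linear. The quasi-norm estimate
\[
\|Lx-\Omega_z x\|_z+\|x\|_z=\|(Lx,x)\|=\|Sx\|\leq\|S\|\,\|x\|_z
\]
simultaneously shows that $Lx-\Omega_z x\in X_z$ and that $\Omega_z-L:X_z\to X_z$ is bounded, as required.

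I do not anticipate any serious obstacle: the argument is purely formal, driven by the definition of $d_{\Omega_z}X_z$ and its quasi-norm, and never uses a K\"othe structure, so allowing $L$ to take values in the larger space $\Sigma$ (rather than in $L_0(\mu)$ as in Proposition \ref{trivial-centr}) is harmless. The mild subtlety worth flagging is that in the ``if'' direction one must argue that $y-Lx$ actually lands in $X_z$ before the projection formula makes sense; this is handled by the decomposition above and is the one place where the hypothesis ``$\Omega_z-L$ maps into $X_z$'' is genuinely used.
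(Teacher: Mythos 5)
Your proof is correct and is exactly what the paper intends: it explicitly says the result is obtained ``with the proof of Proposition \ref{trivial-centr}'', and your line-by-line transposition (projection $(y,x)\mapsto(y-Lx,0)$ in one direction, section $Sx=(Lx,x)$ in the other) is that same argument. The points you flag --- that $y-Lx\in X_z$ must be checked via the decomposition $(y-\Omega_z x)+(\Omega_z x-Lx)$, and that nothing K\"othe-specific is used --- are precisely the right ones.
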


%

\subsection{Distances and isomorphisms}

It is not difficult to translate results (at levels $0$ and $1$) from the open unit disk $\mathbb D$
to the open unit strip $\mathbb{S}$, and conversely.
Indeed, if $\varphi :\mathbb{S} \rightarrow \mathbb{D}$ is a conformal map and
$(X_\omega)_{\omega\in\mathbb{D}}$ is an interpolation family on $\mathbb D$, then
$Y_{z} = X_{\varphi(z)}$ provides an interpolation family $(Y_z)_{z \in \mathbb{S}}$ on $\mathbb S$.
The corresponding derivation maps are related as follows:
$$
\Omega^{\mathbb{S}}_{z} = \varphi'(z)\Omega^{\mathbb D}_{\varphi(z)}.
$$

Given $s\in U$, we denote by $\varphi_s:U\to \mathbb D$ a conformal equivalence taking $s$ to $0$.
In the case $U= \mathbb{S}$ an example is given by
\begin{equation}
\label{conformal}
\varphi_s(z) = \frac{\sin\left(\pi(z-s)/2\right)}{\sin\left(\pi(z+s)/2\right)}\quad
\textrm{($z\in \mathbb{S}$)}
\end{equation}
for which $\varphi_s'(s)=\pi/(2\sin\pi s)$.
The conformal equivalence $\varphi_s$ is unique up to a multiplicative constant: any other conformal
equivalence $\psi_s$ taking $s$ to $0$ can be written as $\psi_s=f\circ \varphi_s$, where
$f(z)=e^{i \theta}z$ \cite[13.14 Lemma]{bak-newman}.

Given $\mathscr{F}(U, \Sigma)$ and $z\in U$, we denote by $\delta_z^{n}:\mathscr{F}\to \Sigma$
the evaluation of the $n$-{th} derivative at $z$.
We will need the following estimates:

\begin{lemma}\label{deltas-lemma}
Let $\mathscr{F}(U,\Sigma)$ be a Kalton space, $s\in U$ and $n \in \N$. Then
\begin{enumerate}
\item $\|\delta^{n}_s: \mathscr{F} \to \Sigma\| \leq  n! /\dist(s,\partial U)^n.$
\smallskip

\item $\|\delta'_s: \ker \delta_s \to X_s \| =
\inf \{\|\delta'_s x\| : x\in \ker \delta_s, \dist(x, \ker \delta'_s)=1\}= |\varphi_s'(s)|$.
\end{enumerate}
\end{lemma}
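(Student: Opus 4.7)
The plan is to dispatch the two claims using the factorization structure provided by condition (b) of Definition \ref{def:admissible}; both reduce to direct manipulations.

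For (1), I apply the Cauchy integral formula on a circle $|w-s|=r$ with $r<\dist(s,\partial U)$:
\[
f^{(n)}(s)=\frac{n!}{2\pi i}\int_{|w-s|=r}\frac{f(w)}{(w-s)^{n+1}}\,dw,
\]
which gives $\|f^{(n)}(s)\|_\Sigma\leq n!\,r^{-n}\sup_{|w-s|=r}\|f(w)\|_\Sigma$. Since each $X_w$ embeds contractively into $\Sigma$ and $\|f(w)\|_{X_w}\leq\|f\|_{\mathscr F}$ by the very definition of the quotient norm, one has $\|f(w)\|_\Sigma\leq\|f\|_{\mathscr F}$; letting $r\nearrow\dist(s,\partial U)$ finishes (1). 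This is essentially the classical Cauchy estimate and presents no real difficulty.

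For (2), the engine is that by (b) the multiplication map $f\mapsto\varphi_s f$ is an isometric isomorphism $\mathscr F\to\ker\delta_s$, and iterating it yields an isometric isomorphism $k\mapsto\varphi_s^2 k$ from $\mathscr F$ onto $\ker\delta_s\cap\ker\delta'_s$. Given $x\in\ker\delta_s$, I write $x=\varphi_s h$ with $\|x\|_{\mathscr F}=\|h\|_{\mathscr F}$; since $\varphi_s(s)=0$, one has $\delta'_s x=\varphi'_s(s)h(s)$, hence
\[
\|\delta'_s x\|_{X_s}=|\varphi'_s(s)|\,\|h(s)\|_{X_s}.
\]
On the quotient side,
\[
\|[x]\|_{\ker\delta_s/(\ker\delta_s\cap\ker\delta'_s)}=\inf_{k\in\mathscr F}\|\varphi_s h-\varphi_s^2 k\|_{\mathscr F}=\inf_{k\in\mathscr F}\|h-\varphi_s k\|_{\mathscr F}=\dist(h,\ker\delta_s)=\|h(s)\|_{X_s}.
\]

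Comparing these identities shows that the map induced by $\delta'_s$ on $\ker\delta_s/(\ker\delta_s\cap\ker\delta'_s)$ is pure multiplication by the scalar $\varphi'_s(s)$, hence isometric up to the factor $|\varphi'_s(s)|$. This immediately yields both $\|\delta'_s:\ker\delta_s\to X_s\|=|\varphi'_s(s)|$ and the minimum-modulus identity $\inf\{\|\delta'_s x\|_{X_s}:x\in\ker\delta_s,\ \dist(x,\ker\delta'_s)=1\}=|\varphi'_s(s)|$, provided that $\ker\delta'_s$ in the latter expression is read as the kernel of the restricted operator $\delta'_s|_{\ker\delta_s}$, namely $\ker\delta_s\cap\ker\delta'_s$. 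The main (and really only) delicacy is precisely this reading, which is forced by the fact that every quantity in (2) is being computed inside the restriction.
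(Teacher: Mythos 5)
Your proof is correct and takes essentially the same approach as the paper's. Part (1) is the identical Cauchy-estimate argument. For part (2), the paper establishes the upper bound and the minimum-modulus lower bound by two explicit extremal constructions ($f=\varphi_s'(s)\varphi_s^{-1}g$ in one direction, $g=\varphi_s'(s)^{-1}\varphi_s f$ in the other), while you package the same underlying computation more tidily by identifying the quotient norm on $\ker\delta_s/(\ker\delta_s\cap\ker\delta_s')$ with $\|h(s)\|_s$ and concluding that the induced map is exactly $\varphi_s'(s)$ times an isometry; this is precisely the observation the authors record as a remark immediately after the lemma. Your reading of $\dist(x,\ker\delta_s')$ as $\dist(x,\ker\delta_s\cap\ker\delta_s')$ is the intended one (it is what the paper's own proof actually establishes, via $\delta_s'(B_{\ker\delta_s})\supset|\varphi_s'(s)|(1+\e)^{-1}B_{X_s}$), so flagging it was appropriate rather than a real concern.
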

\begin{proof}
Given a positively oriented closed rectifiable curve $\Gamma$ in $U$ for which $z$ belongs
to the inside of $\Gamma$, the Cauchy integral formula \cite[Appendix A3]{LaursenNeumann:00}
establishes that, for each $n\in\N_0$,
$$
f^{(n)}(z) = \frac{n!}{2\pi i}\int_\Gamma \frac{f(\omega)}{(w-z)^{n+1}}dw.
$$
We take a number $r$ with $0<r<\dist(s,\partial U)$ and denote by $\Gamma$ the boundary of
the open disc $\mathbb{D}(s,r)$.
By the Cauchy integral formula
$$
\|f^{(n)}(s)\| \leq \frac{n!}{2\pi}\int_\Gamma \frac{\|f(\omega)\|}{r^{n+1}}d|w| \leq \frac{n!}{r^n}
\|f\|_\mathscr{F},
$$
and since we can take $r$ arbitrarily close to $\mathrm{dist}(s,\partial U)$, we get estimate (1).
\smallskip

(2) Clearly $\|\delta'_s: \ker \delta_s \to X_s \| \geq
\inf \{\|\delta'_s x\| : x\in \ker \delta_s, \dist(x, \ker\delta'_s)=1\}$, and given $g\in \ker \delta_s$
the function $f(z) =  \varphi_s'(s)\cdot\varphi_s(z)^{-1}g(z)$ is in
$\mathscr{F}$ and satisfies $f(s)= g'(s)$ and $\|f\|=|\varphi_s'(s)|\|g\|$.
Therefore
$$
\|\delta'_s g\|_s = \|f(s)\|_s \leq |\varphi_s'(s)|\|g\|,
$$
and we get  $\|\delta'_s: \ker \delta_s \to X_s \|\leq |\varphi_s'(s)|$.

Also, given $x\in B_{X_s}$ and $\varepsilon>0$, we can take
$f\in \mathscr{F}$ with $\|f\|<(1+\varepsilon)$ and $f(s)=x$.
Then $g(z) =  \varphi_s'(s)^{-1}\varphi_s(z)\cdot f(z)$ defines  $g\in \ker \delta_s$ with
$\|g\|<(1+\varepsilon)/|\varphi_s'(s)|$ and $g'(s)=x$.
Hence
$$
\delta'_s (B_{\ker \delta_s})\supset |\varphi_s'(s)|(1+\varepsilon)^{-1}B_{X_s},
$$
and we get $\inf \{\|\delta'_s x\| : x\in \ker\delta_s, \dist(x,\ker\delta'_s)=1\}\geq |\varphi_s'(s)|$
finishing the proof.
\end{proof}


Part (2) of Lemma \ref{deltas-lemma} says that $\delta_s': \ker \delta_s \to X_s$ is not only surjective, but
a multiple of a quotient map: the induced injective map $\ker \delta_s/(\ker \delta_s'\cap\ker \delta_s) \to X_s$
is $|\varphi_s'(s)|$ times an isometry.

\begin{lemma}\label{stable-lemma}
For each $f\in \mathscr F$ and $s\in U$, we have $\Omega_s(f(s)) - f'(s)\in X_s$ with
$$
\|\Omega_s(f(s)) - f'(s)\|_s \leq  2\| \delta_s': \ker \delta_s \to X_s\|\|f\|
\leq  2\|f\|/\mathrm{dist} (s, \partial U).
$$
\end{lemma}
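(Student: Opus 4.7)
The strategy is a one-line algebraic manipulation followed by two applications of the preceding lemmas. I choose the selection $B_s$ so that, for each $x \in X_s$, the function $B_s(x) \in \mathscr{F}$ is a $1$-extremal (or, in full generality, an $(1+\varepsilon)$-extremal) representative of $x$ at $s$, meaning $\delta_s B_s(x) = x$ and $\|B_s(x)\|_{\mathscr F} = \|x\|_s$. Given $f \in \mathscr F$, write $g = B_s(f(s))$. Then $g - f \in \ker \delta_s$, because $\delta_s g = f(s) = \delta_s f$, so that
\[
\Omega_s(f(s)) - f'(s) = \delta'_s(g) - \delta'_s(f) = \delta'_s(g - f).
\]
By Proposition~\ref{aux-prop} applied to $g-f \in \ker\delta_s$, this element lies in $X_s$.

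The first inequality then follows by estimating separately the norm of $\delta'_s$ on $\ker\delta_s$ and the $\mathscr F$-norm of $g-f$. The former is supplied by Lemma~\ref{deltas-lemma}(2), while the latter reduces to the triangle inequality combined with extremality:
\[
\|g-f\|_{\mathscr F} \leq \|g\|_{\mathscr F} + \|f\|_{\mathscr F} = \|f(s)\|_s + \|f\|_{\mathscr F} \leq 2\|f\|_{\mathscr F},
\]
where I used that $\|f(s)\|_s \leq \|f\|_{\mathscr F}$ from the very definition of the quotient norm on $X_s$. Putting these together yields
\[
\|\Omega_s(f(s)) - f'(s)\|_s \leq \|\delta'_s : \ker\delta_s \to X_s\|\cdot \|g-f\|_{\mathscr F} \leq 2\,\|\delta'_s : \ker\delta_s \to X_s\|\cdot \|f\|_{\mathscr F}.
\]

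The second inequality requires bounding $|\varphi'_s(s)|$ (which equals $\|\delta'_s:\ker\delta_s\to X_s\|$ by Lemma~\ref{deltas-lemma}(2)) in terms of $\mathrm{dist}(s,\partial U)$. This is a direct application of Schwarz's lemma: with $r=\mathrm{dist}(s,\partial U)$, the disc $\mathbb{D}(s,r)$ lies inside $U$, hence the composition $z\mapsto \varphi_s(s+rz)$ maps $\mathbb D$ into $\mathbb D$ and sends $0$ to $0$, so its derivative at $0$ has modulus at most one. That derivative equals $r\,\varphi'_s(s)$, and rearranging gives $|\varphi'_s(s)| \leq 1/\mathrm{dist}(s,\partial U)$, from which the stated bound follows.

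\paragraph{Main obstacle.} The subtle point is justifying the constant $2$ rather than a larger constant $1+K$ coming from a $K$-bounded selection. This forces the use of a (near-)$1$-extremal selection; in general Kalton spaces extremals of norm exactly $\|x\|_s$ need not exist, so strictly speaking one proves the estimate with $(2+\varepsilon)$ for every $\varepsilon>0$, and absorbs the $\varepsilon$ either by letting $\varepsilon \to 0$ or by noting that $\Omega_s$ is only defined modulo bounded perturbations anyway. Everything else is a direct assembly of Proposition~\ref{aux-prop}, Lemma~\ref{deltas-lemma}(2), and Schwarz's lemma.
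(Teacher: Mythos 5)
Your proof is correct and follows essentially the same route as the paper: express $\Omega_s(f(s)) - f'(s)$ as $\delta'_s$ applied to $B_s(f(s)) - f \in \ker\delta_s$, invoke Proposition~\ref{aux-prop} and Lemma~\ref{deltas-lemma}(2), and bound the extremal by $\|f(s)\|_s + \|f\| \leq 2\|f\|$. You additionally spell out the Schwarz-lemma step $|\varphi'_s(s)| \leq 1/\mathrm{dist}(s,\partial U)$ (and the near-$1$-extremality of $B_s$ needed for the constant $2$), both of which the paper's terse proof leaves implicit; these are the right details to supply.
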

\begin{proof}
From $\Omega_s(f(s)) - f'(s)= \delta_s' \left( B_s(f(s)) - f \right)$ with
$B_s(f(s)) - f\in \ker \delta_s$, we get the first part.
For the rest, note that the operator $\delta_s' : \ker \delta_s \to X_s$ is bounded by
Lemma \ref{deltas-lemma}.
\end{proof}

\begin{prop}\label{isomorphic kernels}
Let $s,t\in U$.
\begin{enumerate}
\item The spaces $\ker \delta_s  $ and $\mathscr F$ are isometric.
Consequently, $\ker \delta_s $ and $\ker \delta_t$ are isometric.
\item For every $n\in \N$, $\cap_{0\leq k\leq n} \ker \delta_s^k$ and $\mathscr F$ are isometric.
%
\end{enumerate}
\end{prop}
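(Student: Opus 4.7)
My plan is to exploit property (b) in Definition \ref{def:admissible} directly: multiplication by any conformal equivalence $\varphi:U\to\mathbb D$ is an isometric operation on $\mathscr F$, and I will use this with the conformal map $\varphi_s$ that sends $s$ to $0$.

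For part (1), I define $M:\mathscr F\to\ker\delta_s$ by $M(f)=\varphi_s\cdot f$. Since $\varphi_s(s)=0$, the function $M(f)$ lies in $\ker\delta_s$, and property (b) gives $\|M(f)\|_{\mathscr F}=\|f\|_{\mathscr F}$, so $M$ is an isometric embedding. To see that $M$ is onto, take $g\in\ker\delta_s$; because $\varphi_s$ has a simple zero at $s$ and $g(s)=0$, the quotient $h(z):=g(z)/\varphi_s(z)$ extends to an analytic function $h:U\to\Sigma$ by removable singularity. Then $\varphi_s\cdot h=g\in\mathscr F$, so property (b) forces $h\in\mathscr F$ with $\|h\|_{\mathscr F}=\|g\|_{\mathscr F}$, and $M(h)=g$. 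The second claim follows by composing the isometries $\ker\delta_s\cong\mathscr F\cong\ker\delta_t$.

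For part (2), I iterate the same idea. Observe first that if $g=\varphi_s^{n+1}\cdot f$ with $f\in\mathscr F$, then by the Leibniz rule and the fact that $\varphi_s^{n+1}$ has a zero of order $n+1$ at $s$, all derivatives $g^{(k)}(s)$ for $0\le k\le n$ vanish; thus $g\in\bigcap_{0\le k\le n}\ker\delta_s^k$. Conversely, any $g$ in that intersection vanishes to order at least $n+1$ at $s$, so dividing by $\varphi_s^{n+1}$ (which vanishes to order exactly $n+1$) yields an analytic $h:U\to\Sigma$ with $g=\varphi_s^{n+1}\cdot h$. Applying property (b) a total of $n+1$ times one gets
\[
\|g\|_{\mathscr F}=\|\varphi_s^{n+1}\cdot h\|_{\mathscr F}=\|\varphi_s^{n}\cdot h\|_{\mathscr F}=\cdots=\|h\|_{\mathscr F},
\]
so the map $M_n:\mathscr F\to\bigcap_{0\le k\le n}\ker\delta_s^k$ defined by $M_n(f)=\varphi_s^{n+1}\cdot f$ is an onto linear isometry.

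No step here looks genuinely hard; the only point that needs a little care is the removable singularity argument, which is clean because $\varphi_s$ is a conformal equivalence and therefore has a simple (nonvanishing derivative) zero at $s$, guaranteeing that the order of vanishing of $g$ matches the order of the factor $\varphi_s^{n+1}$ being pulled out. The rest is just iterating the built-in invariance (b) of the Kalton space under multiplication by a conformal map.
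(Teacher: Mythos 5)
Your proof is correct and is essentially the paper's own argument: the paper defines $d_s(f)=\varphi_s\cdot f$, notes it is a well-defined isometry onto $\ker\delta_s$ by property (b) of Definition \ref{def:admissible}, and handles (2) by observing that $(d_s)^{n+1}$ is an isometry onto $\bigcap_{0\le k\le n}\ker\delta_s^k$. You merely spell out the removable-singularity justification of surjectivity that the paper leaves implicit.
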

\begin{proof}
The operator $d_s: \mathscr F\to \ker \delta_s$ given by $d_s(f)(z) = f(z)\varphi_s(z)$
is clearly well-defined and injective, and it is surjective because each $g\in \ker \delta_s$ can
be written as $g = \varphi_s\cdot f$ with $f\in \mathscr F$.

To prove (2), just note that
$(d_s)^{n+1}: \mathscr F \to \bigcap_{0\leq k\leq n} \ker \delta_s^k$
is also an isometry.
\end{proof}

Let $s,t\in U$.
The map $\varphi_s \cdot f \in\ker \delta_s\to \varphi_t \cdot f\in\ker \delta_t$ is a bijective
isometry, but we need a more precise description.
Note that the map $\varphi_{s,t}:U\to \mathbb{D}$ defined by
$$
\varphi_{s,t}(z) =\frac{\varphi_s(z) - \varphi_s(t)}{1 -\overline{\varphi_s(t)}\varphi_s(z)}\quad
\textrm{($z\in U$)}
$$
is a conformal equivalence  satisfying $\varphi_{s,t}(t)=0$.
Moreover, denoting $\alpha=\varphi_s(t)\in \mathbb{D}$, one has
\begin{eqnarray*}
\|\varphi_s-\varphi_{s,t}\|_\infty &=&\sup_{z\in U} \left| \varphi_s(z)-\varphi_{s,t}(z)\right|=
 \sup_{\lambda \in\mathbb{D}} \left|\lambda- \frac{\lambda - \alpha}{1 -\overline{\alpha}\lambda}\right|\\
 &=& \sup_{\omega\in\mathbb{T}} \left|\omega- \frac{\omega - \alpha}{1 -\overline{\alpha}\omega}\right|=
  \sup_{\omega\in\mathbb{T}} \left|\frac{\alpha -\overline{\alpha}\omega^2}{1 -\overline{\alpha}\omega}\right|\\
  &=&  \sup_{\omega\in\mathbb{T}} \left|\frac{\alpha\overline{\omega} -\overline{\alpha}\omega}{\overline{\omega}
   -\overline{\alpha}}\right|\leq 2|\alpha|,
\end{eqnarray*}
since
$|\alpha \overline{\omega} -\overline{\alpha}\omega|\leq |\alpha \overline{\omega}-\alpha \overline{\alpha}|
+ |\alpha \overline{\alpha} - \overline{\alpha}\omega| = 2|\alpha||\overline{\omega} -\overline{\alpha}|$.


\begin{defin}\label{def:hyp-dist} \emph{(\cite{kaltostr})}
The \emph{pseudo-hyperbolic distance} ${\sf h}(\cdot, \cdot)$ on $U$ is defined by
${\sf h}(s,t) = |\varphi_s(t)|$.
\end{defin}

This yields:

\begin{prop}\label{deltas}
For each $n\in\N\cup\{0\}$,
$g\left(\bigcap_{0\leq k\leq n} \ker\delta_s^k, \bigcap_{0\leq k\leq n} \ker\delta_t^k) \right)\leq
2(n+1){\sf h}(s,t)$.
\end{prop}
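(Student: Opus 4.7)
My approach is to extend to arbitrary $n$ the natural argument underlying the Kalton--Ostrovskii case $n=0$: use Proposition \ref{isomorphic kernels} to write an element of $\bigcap_{0\leq k\leq n}\ker\delta_s^k$ as $\varphi_s^{n+1}\cdot f$, transport it by swapping $\varphi_s$ for $\varphi_{s,t}$ (whose unique zero in $U$ is at $t$), and bound the difference via the estimate $\|\varphi_s-\varphi_{s,t}\|_\infty\leq 2\,{\sf h}(s,t)$ already derived in the computation preceding Definition \ref{def:hyp-dist}.

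Since the gap is symmetric in $s$ and $t$, it suffices to show that each $g$ in the unit ball of $M_n:=\bigcap_{0\leq k\leq n}\ker\delta_s^k$ lies within distance $2(n+1)\,{\sf h}(s,t)$ of the unit ball of $N_n:=\bigcap_{0\leq k\leq n}\ker\delta_t^k$. Following the proof of Proposition \ref{isomorphic kernels}, I write $g=\varphi_s^{n+1}\cdot f$ with $\|f\|_\mathscr{F}=\|g\|\leq 1$ and set $h:=\varphi_{s,t}^{n+1}\cdot f$. Because $\varphi_{s,t}:U\to \mathbb{D}$ is a conformal equivalence, iterated application of property (b) of Definition \ref{def:admissible} gives $\|h\|_\mathscr{F}=\|f\|_\mathscr{F}\leq 1$; and since $\varphi_{s,t}^{n+1}$ vanishes to order $n+1$ at $t$, we have $h\in B_{N_n}$. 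Combining the telescoping identity
$$
\varphi_s^{n+1}-\varphi_{s,t}^{n+1}=(\varphi_s-\varphi_{s,t})\sum_{k=0}^{n}\varphi_s^{k}\varphi_{s,t}^{n-k}
$$
with $|\varphi_s|,|\varphi_{s,t}|\leq 1$ on $U$ and the boundary estimate $\|\varphi_s-\varphi_{s,t}\|_\infty\leq 2\,{\sf h}(s,t)$ yields $\|\varphi_s^{n+1}-\varphi_{s,t}^{n+1}\|_\infty\leq 2(n+1)\,{\sf h}(s,t)$; an $H^\infty(U)$-multiplier bound on the Kalton space then gives $\|g-h\|_\mathscr{F}\leq 2(n+1)\,{\sf h}(s,t)$, as required.

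The main technical obstacle is the last step, since the $H^\infty$-multiplier bound is not explicitly part of Definition \ref{def:admissible} --- property (b) only records the isometric action of conformal equivalences. In all concrete Kalton spaces appearing in the paper (Calder\'on's $\mathscr{C}$, Coifman--Cwikel--Rochberg--Sagher--Weiss' $\mathcal{G}$, Kalton's $\mathcal{N}^+$) the bound is however elementary: on the relevant distinguished boundary one has $|\psi|\leq\|\psi\|_\infty$ pointwise, and this transfers directly to the ambient norm of $\mathscr{F}$. An alternative that avoids this invocation is to expand $(1-\overline{\varphi_s(t)}\,\varphi_s)^{-1}$ as a geometric series in $\varphi_s$ and apply property (b) termwise, but this yields only the weaker constant $2(n+1)\,{\sf h}(s,t)/(1-{\sf h}(s,t))$.
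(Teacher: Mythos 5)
Your proof is essentially the paper's proof with the induction unrolled: the paper establishes the $n=0$ case via $\|\varphi_s - \varphi_{s,t}\|_\infty \leq 2\,{\sf h}(s,t)$ and then steps up to $n$ through the one-term identity $a^{n+1}-b^{n+1} = a^n(a-b)+(a^n-b^n)b$, which is exactly the telescoping sum you write out directly. The one genuine observation you add is correct and worth noting: both your argument and the paper's (which even asserts $\|\varphi_s f - \varphi_{s,t} f\|_\mathscr{F} = \|\varphi_s-\varphi_{s,t}\|_\infty$ for norm-one $f$, presumably a typo for $\leq$) rely on an $H^\infty(U)$-multiplier bound $\|\psi\cdot f\|_\mathscr{F}\leq\|\psi\|_\infty\|f\|_\mathscr{F}$ that does not follow formally from conditions (a)--(b) of Definition \ref{def:admissible} but does hold trivially in every concrete Kalton space the paper considers, since the norm is computed by evaluating on the boundary. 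Your proposed geometric-series workaround is a valid alternative that stays inside the axioms, at the cost of the weaker constant you state.
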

\begin{proof}
We proceed inductively on $n$. For $n=0$, we take a norm-one $\varphi_s \cdot f\in\ker \delta_s$.
Since $\varphi_{s,t} \cdot f\in\ker \delta_t$ is norm-one and
$\|\varphi_s \cdot f-\varphi_{s,t} \cdot f \|_\mathscr{F} =
\|\varphi_s-\varphi_{s,t}\|_\infty\leq 2{\sf h}(s,t)$,
and we can proceed similarly for each norm-one $\varphi_t \cdot f\in\ker \delta_t$, we get
$g\left( \ker\delta_s, \ker\delta_t \right)\leq 2{\sf h}(s,t)$.

Moreover if the estimate holds for $n-1$ then it also holds for $n$ because
\smallskip

\quad $a^{n+1} - b^{n+1} = a^{n+1} - a^n b + a^n b - b^{n+1} = a^n(a-b)  + (a^n -b^n) b.$
\end{proof}

Since $\mathscr F/\bigcap_{0\leq k\leq n} \ker \delta_s^k = d^nX_s$, Propositions \ref{prop:gap-quot}
and \ref{deltas} provide the following result:

\begin{theorem}\label{kadetsderived}
Given $s,t\in U$ and $n\in\N\cup \{0\}$, $d_K(d^nX_s, d^nX_t)\leq 4(n+1){\sf h}(s,t)$.
\end{theorem}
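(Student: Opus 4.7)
The plan is to read the theorem as a direct consequence of the machinery already in place: the identification of $d^nX_s$ as a quotient of $\mathscr F$, Proposition \ref{prop:gap-quot} relating the Kadets distance of two quotients to the gap between the subspaces, and Proposition \ref{deltas} providing the gap estimate. So this will not require new ideas, just assembling the pieces cleanly.

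First I would make the quotient picture explicit. Recall that $d^nX_s = \{(\tfrac{1}{n!}f^{(n)}(s), \ldots, f'(s), f(s)) : f\in \mathscr F\}$ endowed with its natural quotient norm. The map sending $f\in \mathscr F$ to the tuple $(\tfrac{1}{n!}f^{(n)}(s), \ldots, f(s))$ is exactly the quotient map whose kernel is $\bigcap_{0\le k\le n}\ker\delta_s^k$, so up to the canonical identification
\[
d^nX_s \;\equiv\; \mathscr F \Big/ \bigcap_{0\le k\le n}\ker\delta_s^k,
\]
isometrically. The same is true with $t$ in place of $s$.

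Second, I would apply Proposition \ref{prop:gap-quot} with $Z=\mathscr F$, $E=\bigcap_{0\le k\le n}\ker\delta_s^k$ and $F=\bigcap_{0\le k\le n}\ker\delta_t^k$. This yields
\[
d_K(d^nX_s, d^nX_t)\;\leq\; 2\, g\!\left(\bigcap_{0\le k\le n}\ker\delta_s^k,\;\bigcap_{0\le k\le n}\ker\delta_t^k\right).
\]
Finally, invoking Proposition \ref{deltas} bounds the right-hand gap by $2(n+1){\sf h}(s,t)$, and multiplying by $2$ gives the desired bound $d_K(d^nX_s, d^nX_t)\leq 4(n+1){\sf h}(s,t)$.

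There is no genuine obstacle here; the only point to be careful about is that the identification of $d^nX_s$ as the quotient of $\mathscr F$ by the intersection of kernels uses the same $\mathscr F$ for both parameters $s$ and $t$, which is essential so that Proposition \ref{prop:gap-quot} can be applied within a single ambient Banach space. This is automatic from the definition of a Kalton space, so the proof reduces to the one-line combination described above.
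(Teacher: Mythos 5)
Your proof is correct and is exactly the argument the paper uses: identify $d^nX_s$ with $\mathscr F/\bigcap_{0\leq k\leq n}\ker\delta_s^k$, then combine Proposition \ref{prop:gap-quot} with the gap estimate of Proposition \ref{deltas}. No further comment is needed.
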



\begin{corollary}
Let $\mathcal P$ be an open (resp. stable) property.
Assume that there is $s\in U$ so that $d^n X_s$ has $\mathcal P$.
Then $d^n X_t$ has $\mathcal P$ for all $t\in U$ (resp. for all $t$ in an open disc centered in $s$).
\end{corollary}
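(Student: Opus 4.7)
The plan is to apply Theorem \ref{kadetsderived}, which gives $d_K(d^n X_s, d^n X_t) \leq 4(n+1)\,{\sf h}(s,t)$, and to combine it with the definitions of open and stable properties. Throughout I use the fact that, since $U$ is conformally equivalent to $\mathbb D$, the pseudo-hyperbolic metric ${\sf h}$ on $U$ induces the Euclidean topology; in particular, every ${\sf h}$-ball contains a concentric open Euclidean disc, and every Euclidean-compact subset of $U$ sits inside a finite union of small ${\sf h}$-balls.

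For the stable case, the argument is a one-step estimate. Let $C>0$ be the uniform stability constant of $\mathcal P$. Whenever ${\sf h}(s,t) < C/(4(n+1))$ the bound of Theorem \ref{kadetsderived} yields $d_K(d^n X_s, d^n X_t) < C$, and stability then forces $d^n X_t$ to share $\mathcal P$ with $d^n X_s$. The ${\sf h}$-ball of radius $C/(4(n+1))$ around $s$ therefore consists of points $t$ at which $\mathcal P$ holds for $d^n X_t$, and it contains the required open Euclidean disc centered at $s$.

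For the open case, put $V = \{t \in U : d^n X_t \in \mathcal P\}$, so $s\in V$ by hypothesis. At each $r \in V$ the openness of $\mathcal P$ supplies a constant $C_r>0$ such that every Banach space within Kadets distance $C_r$ of $d^n X_r$ belongs to $\mathcal P$; Theorem \ref{kadetsderived} converts this into a ${\sf h}$-neighborhood of $r$ (hence a Euclidean one) contained in $V$, so $V$ is open in $U$. To conclude $V = U$, I would join $s$ to any target $t \in U$ by a continuous arc $\gamma:[0,1]\to U$ and examine $A = \{\rho \in [0,1] : \gamma(\rho) \in V\}$. Openness of $A$ in $[0,1]$ is immediate from the openness of $V$ together with continuity of $\gamma$; closedness follows from the Kadets-continuity of $r \mapsto d^n X_r$ (again via Theorem \ref{kadetsderived}) coupled with the fact that the natural classes of spaces witnessing $\mathcal P$ are preserved under Kadets-limits. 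Connectedness of $[0,1]$ then gives $A=[0,1]$ and thus $t\in V$.

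The main obstacle lies precisely in this closedness step in the open case: it is not a formal consequence of the bare definition of an open property, and has to be justified for the intended applications. For the properties really used in the paper's setting (superreflexivity, $\theta$-Hilbertianness, type/cotype, and related isomorphic invariants) the corresponding classes are indeed Kadets-closed, which, together with the openness already established, delivers the global propagation. The stable case, by contrast, requires nothing beyond the uniform constant and the bound of Theorem \ref{kadetsderived}.
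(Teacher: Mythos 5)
Your one-step estimate via Theorem \ref{kadetsderived} is correct in itself, but you have been misled by a transposition in the statement: the two ``resp.''\ clauses of the corollary are swapped. The introduction announces the result as ``local/global stability for open/stable properties of $d^nX_z$'', and that is the only pairing the definitions support: an open property comes with a constant $C_X$ depending on the space, so from the single hypothesis at $s$ it can only be propagated one step, to the ${\sf h}$-ball of radius $C_{d^nX_s}/(4(n+1))$ around $s$ (hence to an open Euclidean disc centered at $s$); a stable property comes with a uniform constant $C$ and propagates to all of $U$. Read with the intended pairing, your ``stable case'' paragraph is verbatim the proof of the \emph{open} case (with $C_{d^nX_s}$ in place of $C$), and your connectedness scheme is the right skeleton for the \emph{stable} case, not the open one.

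The obstacle you flag --- closedness of $A$ --- is genuine for open properties and is exactly why they yield only local stability; your proposed repair (assuming the class is closed under Kadets limits) imports a hypothesis that is not in the statement and is not needed for what the corollary actually asserts. For a stable property the uniform constant closes precisely that hole: let $V=\{r\in U: d^nX_r \textrm{ has } \mathcal P\}$, which contains $s$ and is open by the one-step estimate. If $t$ lies in the closure of $V$ in $U$, pick $r\in V$ with $4(n+1)\,{\sf h}(r,t)<C$; then $d_K(d^nX_r,d^nX_t)<C$ and stability, applied to $X=d^nX_r$, gives $t\in V$. So $V$ is open and closed in the connected set $U$, whence $V=U$. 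In short: the two halves of your argument should trade places, and once they do, no appeal to Kadets-closed classes is required.
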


\subsection{Bounded stability}

Let $\mathscr{F}(U,\Sigma)$ be a Kalton space and let $z\in U$.
Then the exact sequence $0\to X_z\to dX_z\to X_z\to 0$ associated to $\Omega_z:X_z\to \Sigma$
splits if and only if there exists a linear map $L:X_z\to \Sigma$ such that $\Omega-L$ takes $X_z$
to $X_z$ and it is bounded (Proposition \ref{trivial-deriv}).
Kalton's work justifies the importance of the case $\Omega_z$ bounded in interpolation affairs.
Let us accordingly introduce a few related notions.

\adef\label{def:bounded}
The derivation $\Omega_z$ is {\em bounded} when it takes values in $X_z$ and it is bounded as
a map from $X_z$ to $X_z$.
In this case we will say that the induced exact sequence \emph{boundedly splits.}
\zdef

Bounded splitting admits the following characterizations.

\begin{theorem}\label{main}
Let $\mathscr{F}(U,\Sigma)$ be a Kalton space and let $s \in U$.
The following assertions are equivalent:
\begin{enumerate}
\item $\delta_s : \ker \delta_s'\to X_s$ is surjective.
\item $\mathscr{F} = \ker \delta_s + \ker \delta_s'$.
\item There exists $M>0$ such that each $f \in \mathscr{F}$ can be written as $f=g+h$ with
$g \in \ker \delta_s$, $h \in \ker \delta_s'$ and
$\max\{\|g\|_\mathscr{F},\|h\|_\mathscr{F}\} \leq M\|f\|_\mathscr{F}$.
\item $\delta_s'(\mathscr{F})\subset X_s$.
\item $\delta_s': \mathscr{F} \to X_s$ is bounded.
\item $\Omega_s(X_s) \subset X_s$.
\item $\Omega_s: X_s \to X_s$ is bounded.
\end{enumerate}
\end{theorem}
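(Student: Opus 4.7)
The plan is to prove the equivalence by means of a single cycle of implications
$(1)\Rightarrow(2)\Rightarrow(3)\Rightarrow(5)\Rightarrow(7)\Rightarrow(6)\Rightarrow(4)\Rightarrow(1)$, using the auxiliary results already established in this section.

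First I would dispatch the ``kernel decomposition'' cluster $(1)\Leftrightarrow(2)\Leftrightarrow(3)$. Surjectivity of $\delta_s:\ker\delta_s'\to X_s$ means that for each $f\in\mathscr{F}$ one can find $h\in\ker\delta_s'$ with $\delta_s(h)=\delta_s(f)$, so that $f=(f-h)+h\in\ker\delta_s+\ker\delta_s'$; this gives $(1)\Rightarrow(2)$. Conversely, if $\mathscr{F}=\ker\delta_s+\ker\delta_s'$, the addition map $(g,h)\mapsto g+h$ from the Banach direct sum $\ker\delta_s\oplus\ker\delta_s'$ onto $\mathscr{F}$ is bounded and surjective, so the open mapping theorem provides the constant $M$ in (3), and of course $(3)\Rightarrow(2)$ trivially. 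Going back from (2) to (1) is immediate: given $x\in X_s$ and any $f$ with $f(s)=x$, a decomposition $f=g+h$ yields $h\in\ker\delta_s'$ with $\delta_s(h)=x$.

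For $(3)\Rightarrow(5)$, write $f=g+h$ with $g\in\ker\delta_s$, $h\in\ker\delta_s'$, and $\|g\|_{\mathscr{F}},\|h\|_{\mathscr{F}}\le M\|f\|_{\mathscr{F}}$; then $\delta_s'(f)=\delta_s'(g)$, which lies in $X_s$ and has norm controlled by $\|\delta_s':\ker\delta_s\to X_s\|\cdot M\|f\|_{\mathscr{F}}$ thanks to Lemma~\ref{deltas-lemma}(2). For $(5)\Rightarrow(7)$, use the defining formula $\Omega_s=\delta_s'\circ B_s$: since $B_s$ is a bounded homogeneous selection of $\delta_s$ (say obtained by picking $2$-extremals) and $\delta_s':\mathscr{F}\to X_s$ is now bounded, the composition is bounded from $X_s$ to $X_s$. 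The implication $(7)\Rightarrow(6)$ is trivial.

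The remaining arrows $(6)\Rightarrow(4)\Rightarrow(1)$ are the heart of the matter, and this is where Lemma~\ref{stable-lemma} and Proposition~\ref{aux-prop} do the real work. For $(6)\Rightarrow(4)$: given $f\in\mathscr{F}$, Lemma~\ref{stable-lemma} guarantees that $\Omega_s(f(s))-f'(s)\in X_s$, and by hypothesis $\Omega_s(f(s))\in X_s$, so $f'(s)\in X_s$. For $(4)\Rightarrow(1)$: given $x\in X_s$, pick $f\in\mathscr{F}$ with $f(s)=x$; then $f'(s)\in X_s$ by (4), and Proposition~\ref{aux-prop} produces $g\in\ker\delta_s$ with $g'(s)=f'(s)$; the function $h:=f-g$ satisfies $h(s)=x$ and $h'(s)=0$, i.e.\ $h\in\ker\delta_s'$ with $\delta_s(h)=x$. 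I expect no serious obstacle beyond keeping the roles of $\Sigma$ and $X_s$ straight (since $X_s$ carries the quotient norm and embeds continuously into $\Sigma$), and making sure to reference Lemma~\ref{stable-lemma} at exactly the point where it is needed, namely the bridge from the $\Omega_s$-side (conditions (6),(7)) to the $\delta_s'$-side (conditions (4),(5)).
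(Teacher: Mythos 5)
Your proof is correct and relies on the same key ingredients as the paper's proof (Lemma~\ref{deltas-lemma}, Lemma~\ref{stable-lemma}, Proposition~\ref{aux-prop}, and the open mapping theorem), arranged into a single cycle rather than the paper's chain of smaller implications. The only genuine variation is cosmetic: for $(5)\Rightarrow(7)$ you invoke the definition $\Omega_s=\delta_s'\circ B_s$ directly (which is perfectly fine and slightly more immediate), whereas the paper routes both $(4)\Leftrightarrow(6)$ and $(5)\Leftrightarrow(7)$ through Lemma~\ref{stable-lemma}; and for $(4)\Rightarrow(1)$ you use the surjectivity of $\delta_s':\ker\delta_s\to X_s$ pointwise via Proposition~\ref{aux-prop}, while the paper obtains $(4)\Rightarrow(2)$ via an algebraic linear selection for the same surjection, which is the same idea in different clothing.
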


\begin{proof}
Clearly $(1) \Leftarrow (2) \Leftarrow (3)$, $(4) \Leftarrow (5)$ and $(6) \Leftarrow (7)$.
Moreover $(4) \Leftrightarrow (6)$ and $(5) \Leftrightarrow (7)$ follow from Lemma \ref{stable-lemma}.
We will prove $(1) \Rightarrow (3)\Rightarrow (5)$ and $(4) \Rightarrow (2)$.
\smallskip

$(1) \Rightarrow (3)$: Let $f \in \mathscr F$ with $\|f\|=1$.
Since $\delta_s: \ker \delta_s' \to X_s$ is surjective, it is open.
So there exists $r>0$ such that we can find $h\in \ker \delta_s'$ with $\|h\|\leq r\|f(s)\|_s$ and $h(s)=f(s)$.
Since $\|f(s)\|_s\leq\|f\|$, taking $g=f-h\in\ker \delta_s$ we obtain (3) with $M=r+1$.
\smallskip

$(3) \Rightarrow (5)$: Let  $f \in \mathscr F$.
We can be write $f=g+h$ with $g \in \ker \delta_s$, $h \in \ker \delta_s'$ and
$\|g\|_\mathscr{F} \leq M\|f\|_\mathscr{F}$.
Then
$$
\|\delta'_s(f)\|_s = \|\delta'_s(g)\|_s \leq  \| \delta_s': \ker \delta_s \to X_s\|\cdot\|g\|_\mathscr{F}
\leq  M\| \delta_s': \ker \delta_s \to X_s\|\cdot\|f\|_\mathscr{F}.
$$

$(4) \Rightarrow (2)$: We know that the operator $\delta_s': \ker \delta_s \to X_s$ is surjective.
So taking a linear selection $\ell: X_s \to \ker \delta_s$ for $\delta_s'$,
for each $f\in \mathscr F$, $\ell(f'(s))\in \ker \delta_s$ and $f -\ell(f'(s))\in \ker \delta_s'$.
\end{proof}

Condition (6) shows that the requirements in Definition \ref{def:bounded} are redundant.
Condition (2) in Theorem \ref{main} provides a neat description of how the twisted sum space
$d_{\Omega_s} X_s$ splits when $\Omega_s$ is bounded.
Indeed, since $d_{\Omega_s} X_s = \mathscr F/(\ker \delta_s \cap \ker \delta'_s)$ and the subspace $X_s$
embeds in $d_{\Omega_s} X_s$ as $\ker \delta_s/(\ker \delta_s \cap \ker \delta'_s)$, condition (2) gives
$$
\frac{\mathscr F}{\ker \delta_s \cap \ker \delta'_s}=
\frac{\ker \delta_s + \ker \delta_s'}{\ker \delta_s \cap \ker \delta'_s}=
\frac{\ker \delta_s}{\ker \delta_s\cap \ker \delta'_s}\oplus\frac{\ker \delta_s'}{\ker\delta_s \cap \ker\delta'_s}.
$$

Next we define the notions of stability we will study:

\adef
Given a Kalton space $\mathscr F(U, \Sigma)$, the map $z\to \Omega_z$  will be called
the \emph{differential process} associated to the analytic family $(X_z)_{z\in U}$.

Moreover, we will say that this differential process:
\begin{enumerate}
\item has \emph{local stability} if whenever $\Omega_{z_0}$ is trivial then there is $\e>0$
such that $\Omega_z $ is trivial for $|z-z_0|<\e$.
\item has \emph{local bounded stability} if whenever $\Omega_{z_0}$ is bounded then there is $\e>0$
such that $\Omega_z$ is bounded for for $|z-z_0|<\e$.
\item has \emph{global stability} if whenever $\Omega_{z_0}$ is trivial then $\Omega_z$ is trivial
for all $z\in U$.
\item has \emph{global bounded stability} if whenever $\Omega_{z_0}$ is bounded then $\Omega_z$
is bounded for all $z\in U$.
\end{enumerate}
\zdef

\section{Stability of splitting for K\"othe function spaces}\label{Kothe}

Theorem \ref{kalthm} shows that when an analytic family is generated by an interpolation
pair $(X_0,X_1)$ of K\"othe spaces then the differential process is ``rigid", in the sense that
whenever $\Omega_{z_0}$ is bounded at some point $z_0$ then $X_0=Y_0$, up to some equivalent
renorming.
Here we will prove that:
 \begin{itemize}
 \item The differential process associated to families of up to three K\"othe spaces distributed in arcs enjoys global
 (bounded) stability; in fact, it is ``rigid" in the case of bounded stability and ``rigid" up to
 weighted versions in  the case of stability. This can be found in Section \ref{three}.
\item The differential process associated to families of four spaces can fail local bounded stability
(Proposition \ref{cbounded}) or local stability (Proposition \ref{singfam}).
\end{itemize}

\subsection{Stability for pairs of K\"othe spaces}
After Kalton's bounded stability theorem (Theorem \ref{kalthm}), it is a reasonable guess that ``nontrivial
scales" of K\"othe spaces correspond to ``nontrivial centralizers".
The difficulty is that the non-triviality notion involves uncontrolled linear maps, as we can see in
Proposition \ref{trivial-deriv}.
Thus, while Kalton shows \cite{kaltdiff} that the centralizer $\Omega_\theta$ associated to the scale
$(X_0, X_1)_\theta$ of K\"othe function spaces is bounded if and only if $X_0 = X_1$ up to equivalence
of norms, the following question remained open:
\emph{Does the triviality of $\Omega_\theta$ imply that $X_0$ and $X_1$ are equal, or at least isomorphic?}
\smallskip

We shall now prove global stability for pairs of K\"othe spaces.
The following sentence in \cite[p. 364]{casskalt} clearly suggests that it was known to Kalton, at least
in the domain of K\"othe sequence spaces:
\emph{If $(Z_0, Z_1)$ are two super-reflexive sequence spaces and $Z_\theta = [Z_0, Z_1]_\theta$ for
$0<\theta<1$ is the usual interpolation space by the Calderon method, one can define a derivative
$dZ_\theta$ which is a twisted sum $Z_\theta \oplus_{\Omega} Z_\theta$ which splits if and only if
$Z_1 = wZ_0$ for some weight sequence $w = (w(n))$ where $w(n) > 0$ for all n.
These remarks follow easily from the methods of} \cite{kaltdiff}.
\smallskip

Next we recall Kalton's formula \cite[(3.2)]{kaltdiff} for the centralizer $\Omega_\theta$
corresponding to a couple of K\"othe function spaces $(X_0,X_1)$ and $0<\theta<1$.
%
It is well known \cite{calderon} that $X_\theta$ coincides with the space $X_0^{1-\theta} X_1^\theta$,
with
$$
\|x\|_\theta=\inf \{\|y\|_0^{1-\theta}\|z\|_1^{\theta}: y\in X_0, z\in X_1,
|x|=|y|^{1-\theta}|z|^{\theta}\}.
$$
We fix $c>1$. For each $x\in X$ we write $|x|=|a_0(x)|^{1-\theta}|a_1(x)|^\theta$ with
$\|a_0(x)\|_0, \|a_1(x)\|_1 \leq c\|x\|_\theta$, where $a_0$ and $a_1$ are chosen homogeneously.

Then $B_\theta(x)(z) = (\textrm{sgn} x)|a_0(x)|^{1-z} |a_1(x)|^z$ gives an extremal for $x$
at $\theta$, and we obtain
\begin{equation}\label{kpgeneralized}
\Omega_\theta(x) = \delta_\theta'B_\theta(x)  = x\, \log \frac{|a_1(x)|}{|a_0(x)|}.
\end{equation}

Given a K\"othe function space $X$ of $\mu$-measurable functions, a \emph{weight} $w$ is a positive
function in $L_0(\mu)$.
We denote by $X(w)$ the space of all measurable scalar functions $f$ such that $wf\in X$, endowed
with the norm $\|x\|_{w} = \|wx\|_X$.

From the approach in \cite{cfg} we get the following general version of a well-known result for scales of
$L_p$-spaces \cite[5.4.1. Theorem]{BL}:

\begin{prop}\label{twist-weights}
Let $X$ be a K\"othe function space with the Radon-Nikodym property, and let $w_0, w_1$ be two weights.
Then $(X(w_0), X(w_1))_\theta = X(w_0^{1-\theta} w_1^\theta)$ for $0<\theta<1$, with associated
linear centralizer $\Omega_\theta(x) = \log (w_0/w_1)\cdot x$ for $x\in \varphi(X)$.
\end{prop}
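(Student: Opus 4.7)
My plan is to reduce everything to the unweighted Calderón identification $X_\theta = X_0^{1-\theta} X_1^\theta$ cited from \cite{calderon}, which under the Radon-Nikodym hypothesis holds with equality of norms. For the pair $(X(w_0),X(w_1))$ this becomes
$$
\big(X(w_0),X(w_1)\big)_\theta = X(w_0)^{1-\theta}X(w_1)^\theta
$$
with
$$
\|x\|_\theta = \inf\{\|y\|_{X(w_0)}^{1-\theta}\|z\|_{X(w_1)}^{\theta}:\; |x|=|y|^{1-\theta}|z|^\theta\}.
$$
First I would show the pointwise identity $X(w_0)^{1-\theta}X(w_1)^\theta = X(w_0^{1-\theta}w_1^\theta)$, with equality of norms: given a factorization $|x|=|y|^{1-\theta}|z|^\theta$, the substitution $y'=w_0 y$, $z'=w_1 z$ produces elements of $X$ satisfying $w_0^{1-\theta}w_1^\theta |x|=|y'|^{1-\theta}|z'|^\theta$, so $\|x\|_{X(w_0^{1-\theta}w_1^\theta)}\leq \|y'\|_X^{1-\theta}\|z'\|_X^{\theta}=\|y\|_{X(w_0)}^{1-\theta}\|z\|_{X(w_1)}^\theta$, and this substitution is a bijection between admissible decompositions. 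Applying Calderón's formula to $X$ itself gives the first assertion.

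Next, to compute $\Omega_\theta$, I would exhibit an explicit $1$-extremal decomposition of the form required by formula (\ref{kpgeneralized}). The natural candidate is
$$
a_0(x) = (w_1/w_0)^{\theta}\, x, \qquad a_1(x) = (w_0/w_1)^{1-\theta}\, x,
$$
which is linear in $x$ and satisfies $|x|=|a_0(x)|^{1-\theta}|a_1(x)|^\theta$ together with
$$
\|a_0(x)\|_{X(w_0)} = \|w_0^{1-\theta}w_1^{\theta}x\|_X = \|x\|_{X(w_0^{1-\theta}w_1^\theta)} = \|a_1(x)\|_{X(w_1)}.
$$
Thus this decomposition is optimal (it may be taken with $c=1$). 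The associated extremal function of (\ref{kpgeneralized}) simplifies to
$$
B_\theta(x)(z) = (\operatorname{sgn} x)\,|a_0(x)|^{1-z}|a_1(x)|^{z} = x\,(w_0/w_1)^{z-\theta},
$$
which is clearly analytic with $B_\theta(x)(\theta)=x$, and differentiating at $z=\theta$ gives $\Omega_\theta(x) = B_\theta(x)'(\theta) = \log(w_0/w_1)\cdot x$, a linear map as claimed.

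The only delicate point is the Calderón identification being \emph{isometric}, which is where the Radon-Nikodym hypothesis enters; aside from that, the argument is just a verification of pointwise identities together with the application of Kalton's formula (\ref{kpgeneralized}). The clean feature of the weighted case is that the geometric means of the weights produce extremals $a_0(x)$ and $a_1(x)$ that depend linearly on $x$, which is what makes $\Omega_\theta$ linear and what gives the explicit factor $\log(w_0/w_1)$. The restriction to $x\in\varphi(X)$ in the statement is just a harmless way to ensure that the pointwise expression $\log(w_0/w_1)\cdot x$ makes sense as an element of the containing space $\Sigma$.
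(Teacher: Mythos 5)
Your argument is correct and follows essentially the same route as the paper: both reduce to the Calder\'on--Lozanovskii identification, establish the weighted identity $(X(w_0),X(w_1))_\theta=X(w_0^{1-\theta}w_1^\theta)$ via the lattice inequality in one direction and the explicit factorization in the other, and compute $\Omega_\theta$ from the extremal $B_\theta(x)(z)=x\,(w_0/w_1)^{z-\theta}$. The only cosmetic difference is that you verify $B_\theta$ is a $1$-extremal through the optimal decomposition $(a_0,a_1)$ fed into formula (\ref{kpgeneralized}), whereas the paper checks the boundary norms $\|B_\theta(x)(j+it)\|_{w_j}=\|x\|_{X_\theta}$ directly.
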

\begin{proof}
By \cite[Theorem 4.6]{kalt-mon}, the space $(X(w_0), X(w_1))_\theta$ is isometric to the space
$X(w_0)^{1-\theta} X(w_1)^\theta$ endowed with the norm
\begin{align*}
\|x\|_{\theta}
&= \inf \{ \|a\|_{w_0}^{1-\theta}\|b\|_{w_1}^\theta : a\in X(w_0), b\in X(w_1), \left|x\right|=\left|a\right|^{1-\theta}\left|b\right|^\theta\}\\
&= \inf \{ \|w_0a\|_X^{1-\theta}\|w_1b\|_X^\theta :  a\in X(w_0), b\in X(w_1),  \left|x\right|=\left|a\right|^{1-\theta}\left|b\right|^\theta\}.
\end{align*}

Standard lattice estimates such as \cite[Proposition 1.d.2]{lindtzaf-2} imply that
$$
\|x\|_\theta \geq \inf \{ \|w_0^{1-\theta}a^{1-\theta}w_1^\theta b^\theta\|_X :
\quad \left|x\right|=\left|a\right|^{1-\theta}\left|b\right|^\theta\} = \|x\|_{X(w_0^{1-\theta}w_1^\theta)},
$$
and the reverse inequality can be obtained by using $w_0a = w_1 b = w_0^{1-\theta}w_1^\theta x$.
\smallskip

To obtain $\Omega_\theta$ on $X_\theta$, we observe that a bounded homogeneous
selector for the evaluation map $\delta_\theta$ is defined by
$B_\theta(x) = (w_1/w_0)^{\theta -z} x$:
indeed, $B_\theta(x)(\theta) = x$ while $\|B_\theta x\| = \|x\|_{X_\theta}$ as it follows from
\smallskip

\quad\quad\quad $\|B_\theta(x)(0+it)\|_{w_0} = \|B_\theta(x)(1+it)\|_{w_1} =
\|w_0^{1-\theta}w_1^\theta x\|_X =\|x\|_{X_\theta}$.
\end{proof}

Complex interpolation between two Hilbert spaces always yields Hilbert spaces \cite{kalt-mon}.
Let us show that the induced derivation is trivial.

\begin{corollary}\label{trivialhilbert}
Let $(H_0, H_1)$ be an interpolation pair of Hilbert spaces.
Then for every $0<\theta<1$ the derivation $\Omega_\theta$ is trivial.
\end{corollary}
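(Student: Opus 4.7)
The plan is to reduce the statement to Proposition \ref{twist-weights} by representing the couple $(H_0,H_1)$ as a couple of weighted $L_2$-spaces, where the associated derivation can be computed explicitly and seen to be linear (hence trivial).

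First I would pass to the regular setting (the general case reduces to it since the derivation at $\theta$ only depends on $H_\theta$ and the ambient couple up to equivalence of norms). Assuming $\Delta = H_0 \cap H_1$ is dense in both $H_0$ and $H_1$, I would use a standard simultaneous-diagonalization argument: on $\Delta$ the two inner products are comparable after restriction to the common Banach ambient, so by the spectral theorem there exists a positive (possibly unbounded) self-adjoint operator $T$ on $H_0$, with dense domain containing $\Delta$, such that $\langle x,y\rangle_{H_1}=\langle Tx,y\rangle_{H_0}$. Applying the multiplication form of the spectral theorem, one finds a $\sigma$-finite measure space $(S,\mu)$, a positive measurable weight $w$, and a unitary $U:H_0\to L_2(\mu)$ taking $T$ to the operator of multiplication by $w$. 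Under $U$, the couple $(H_0,H_1)$ is isometrically identified with $(L_2(\mu),L_2(w\,d\mu))$, which in the K\"othe-space notation of Proposition \ref{twist-weights} is the couple $(L_2(\mu)(1),L_2(\mu)(w^{1/2}))$.

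Next, since $L_2(\mu)$ has the Radon--Nikodym property, Proposition \ref{twist-weights} applies with $X=L_2(\mu)$, $w_0=1$ and $w_1=w^{1/2}$. It gives
\[
(H_0,H_1)_\theta \;=\; L_2(\mu)\bigl(w^{\theta/2}\bigr) \;=\; L_2\bigl(w^{\theta}\,d\mu\bigr),
\]
together with an explicit associated derivation $L(x)=\log(w_0/w_1)\cdot x=-\tfrac12\log w\cdot x$, which is a linear (unbounded) map on $H_\theta$. Since any two derivations differ by a bounded map, we may take $\Omega_\theta=L$ modulo a bounded perturbation; equivalently, $\Omega_\theta - L$ is bounded on $H_\theta$. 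Proposition \ref{trivial-deriv} then yields that $\Omega_\theta$ is trivial.

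The main obstacle is the spectral reduction in the first step: making the passage from an abstract compatible pair of Hilbert spaces to a concrete couple of weighted $L_2$-spaces rigorous requires a careful choice of the operator $T$ implementing the change of inner products on the common dense subspace and a justification that the resulting identification preserves the interpolation structure (so that the derivations correspond). Once this identification is in place, the rest is a direct invocation of Propositions \ref{twist-weights} and \ref{trivial-deriv}. A complementary, essentially equivalent, route would be to note that $(H_0,H_1)_\theta$ is Hilbertian and to compute the derivation directly from formula \eqref{kpgeneralized} after diagonalization, where $|a_0(x)|$ and $|a_1(x)|$ can be chosen as scalar multiples of $|x|$ times powers of $w$, again yielding a linear expression.
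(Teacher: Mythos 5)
Your proposal is correct and takes essentially the same approach as the paper: reduce $(H_0,H_1)$ to a couple of weighted $L_2$-spaces and then apply Proposition \ref{twist-weights} to get a linear derivation, hence triviality. The only difference is cosmetic: you sketch the structural reduction yourself via the spectral theorem, whereas the paper simply cites \cite[Lemma 2.2]{domast}, which records that $(H_0,H_1)$ is equivalent to a pair $(\ell_2(I),\ell_2(I,w))$.
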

\begin{proof}
It follows from Proposition \ref{twist-weights}, since \cite[Lemma 2.2]{domast} shows that
$(H_0, H_1)$ is equivalent to an interpolation pair $(\ell_2(I), \ell_2(I, w))$, where $I$ is
a set and $w:I \to \R$ is a positive weight.
\end{proof}

Next we solve the stability problem for the splitting in the case of a pair of K\"othe spaces,
completing Theorem \ref{kalthm}.

\begin{theorem}\label{stsplit}
Let $(X_0, X_1)$ be an interpolation pair of superreflexive K\"othe function spaces and let $0<\theta<1$.
Then $\Omega_\theta$ is trivial if and only if there is a weight function $w$ so that $X_1 = X_0(w)$
up to equivalence of norms.
\end{theorem}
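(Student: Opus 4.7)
The easy direction follows from Proposition \ref{twist-weights}: if $X_1=X_0(w)$ up to equivalent renorming, then the pair $(X_0,X_1)$ is equivalent as an interpolation couple to $(X_0(1),X_0(w))$, whose derivation at $\theta$ is the \emph{linear} map $x\mapsto -\log(w)\cdot x$. Passing to equivalent norms on the endpoints only alters the derivation by a bounded centralizer, so $\Omega_\theta$ equals a linear map modulo a bounded term, hence is trivial by Proposition \ref{trivial-centr}.

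For the converse, I would assume $\Omega_\theta$ trivial and aim to produce the weight $w$. By Proposition \ref{trivial-centr} there is a linear $L:X_\theta\to L_0(\mu)$ with $\Omega_\theta-L$ bounded from $X_\theta$ to $X_\theta$. The first technical step is to reduce to the case in which $L$ is multiplication by a fixed function $g\in L_0(\mu)$; this uses the fact (implicit in \cite{kaltdiff}) that a linear real centralizer on a separable K\"othe function space is, modulo a bounded perturbation, multiplication by some $g\in L_0$. The argument then passes to Kalton's indicator functional. For $L(x)=gx$ one computes $L^{[1]}(f)=gf$, so $\Phi^L(f)=\int gf\,d\mu$; since $\Omega_\theta-L$ is bounded, $|\Phi^{\Omega_\theta}(f)-\Phi^L(f)|=O(\|f\|_{L_1})$ on a suitable strict semi-ideal. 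Combined with Theorem \ref{thm-indicator-of-centralizer}(1), which yields $\Phi^{\Omega_\theta}=\Phi_{X_0}-\Phi_{X_1}$, this gives
\[
\Phi_{X_0}(f)=\Phi_{X_1}(f)+\int gf\,d\mu+O(\|f\|_{L_1}).
\]
Setting $w=e^{-g}$ and using the elementary identity $\Phi_{X(w)}(f)=\Phi_X(f)-\int f\log w\,d\mu$ (immediate from the definition of the indicator), this rewrites as $\Phi_{X_0}(f)=\Phi_{X_1(w)}(f)+O(\|f\|_{L_1})$.

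The final step, and the one I expect to be the main obstacle, is to deduce $X_0=X_1(w)$ with equivalent norms from this indicator identity. This requires invoking Kalton's uniqueness principle: two separable superreflexive K\"othe function spaces whose indicators agree up to an $O(\|f\|_{L_1})$ term on a sufficiently large semi-ideal must coincide with equivalent norms. Pinning this down (and in particular controlling the common domain on which both indicators are finite and comparable) appears to require the full Lozanovskii-duality machinery developed in \cite{kaltdiff}, together with the superreflexivity assumption. The remark of Kalton quoted from \cite{casskalt} strongly suggests that he had precisely this indicator route in mind, at least in the sequence-space setting, and the superreflexivity hypothesis is exactly what makes the reduction of a trivial centralizer to a multiplication operator, and the recovery of the space from its indicator, behave well.
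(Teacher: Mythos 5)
Your proposal is correct in outline and shares the paper's skeleton --- reduce the linear map $L$ to a multiplication operator $x\mapsto gx$, then identify the endpoint spaces --- but it diverges from the paper at the final identification step. The paper, having produced a real $f$ with $\Psi_\theta - f\cdot$ bounded, constructs an \emph{explicit auxiliary couple} $(Y_\theta(e^{\theta f}), Y_\theta(e^{(\theta-1)f}))$ whose interpolation space at $\theta$ is again $Y_\theta$ and whose derivation is exactly multiplication by $f$ (Proposition \ref{twist-weights}), and then invokes Kalton's uniqueness theorem for couples (same interpolation space, boundedly equivalent derivations $\Rightarrow$ same endpoints) to conclude that $Y_0,Y_1$ are the two weighted versions of $Y_\theta$. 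You instead run the indicator calculus directly: $\Phi^{\Omega_\theta}=\Phi_{X_0}-\Phi_{X_1}$, $\Phi^{L}(f)=\int gf\,d\mu$, and $\Phi_{X(w)}=\Phi_X-\int f\log w\,d\mu$, concluding via Kalton's recovery of a K\"othe space from its indicator (\cite[Proposition 4.5]{kaltdiff}). These are close relatives --- the couple uniqueness theorem is itself proved through indicators, and your route is precisely the one the paper uses later for the three-arc case (Theorem \ref{thm-unicity}) --- but yours trades the clean reduction to a previously stated black box for a direct computation in which you must yourself control the common strict semi-ideal on which all the functionals are finite; the paper's detour through the auxiliary couple outsources exactly that bookkeeping.

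Two points where your write-up under-delivers. First, the reduction of $L$ to a multiplication operator is not "implicit in Kalton" in a way you can simply cite: it is where the paper does its real work (Step 1), requiring a preliminary passage to weighted versions $X_j(1/k)$ so that characteristic functions lie in the middle space, an averaging over unimodular functions to turn $L$ into an $L_\infty$-module map $\Lambda$, and $\sigma$-order continuity (from superreflexivity) to pass from bounded elements to all of $Y_\theta$ via duality. Second, for $w=e^{-g}$ to be a weight you need $g$ real; since $\Omega_\theta$ is a real centralizer (Theorem \ref{kalthm}(1)), one checks that replacing $g$ by its real part preserves boundedness of $\Omega_\theta-g\cdot$, as the paper does explicitly. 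Neither point invalidates your approach, but both must be supplied. Your treatment of the easy direction via Proposition \ref{twist-weights} is fine (and is in fact more explicit than the paper, which omits it).
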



\begin{proof}
Recall that $X_0, X_1$ are spaces of $\mu$-measurable functions. The proof goes in two steps:

\noindent \textsc{Step 1.}
\emph{If $\Omega_\theta $ is trivial then there are weighted versions $Y_i$ of $X_i$ so that if $\Psi_\theta$ is the associated derivation, then there is a real function $f\in L_0(\mu)$ so that
$\Psi_\theta(x)- fx\in X_\theta$ and $\Psi_\theta- f$ is a bounded map on a dense subspace of $X_\theta$.}
\smallskip

Since we are dealing with interpolation of K\"othe function spaces, there is a positive function $k > 0$ such that
$\|x\|_{X_j} \leq \|kx\|_{\infty}$ for $j = 0, 1$. Consider the couple $(Y_0, Y_1)$, where $Y_j = X_j(1/k)$, $j = 0, 1$.
We denote the derivation induced at $\theta$ by this couple by $\Psi_{\theta}$. Then $Y_{\theta} = X_{\theta}(1/k)$
and $\Psi_{\theta}$ is trivial. Our advantage in working with $Y_{\theta}$ is that characteristic functions of measurable sets
are in this space.

Since $\Psi_\theta$ is a centralizer, there is a constant $c > 0$ such that for every
$a\in L_{\infty}(\mu)$ and every $x\in X$ we have $\|\Psi_\theta(ax) -a\Psi_\theta(x)\|_{X_{\theta}}\leq c\|a\|_{\infty}\|x\|_{Y_{\theta}}$,
and since it is trivial, there is a linear map $L$ so that $\Psi_\theta - L$ takes values in $Y_\theta$
and is bounded there.
The techniques in \cite{cfg} (Lemmas 3.10 and 3.13) show that after some averaging it is possible to get a
linear map $\Lambda$ such that $\Psi_\theta - \Lambda$ takes values in $Y_\theta$, is bounded there and
$\Lambda(ux)=u\Lambda x$ for every unit $u$ (every function with $|u|=1$).
Since characteristic functions can be written as the mean of two units one gets that if
$s = \sum_i \lambda_i 1_{A_i}$ is a simple function then $\Lambda(s x)=s\Lambda(x)$.
Now, simple functions are dense in $L_\infty$, so given $a\in L_\infty$ pick a simple $s$ so that
$\|a - s\|\leq \varepsilon$.
Since $\Lambda(ax) = \Lambda((a-s)x) +\Lambda (sx)$ and $a\Lambda(x) = (a-s)\Lambda(x)+ s\Lambda (x)$,
it follows that for some constant $K$
$$
\|\Lambda(ax)- a\Lambda(x)\|= \|\Lambda((a-s)x) -(a-s)\Lambda(x)\|\leq K\|a-s\|\|x\|\leq K\varepsilon\|x\|
$$
which shows that $\Lambda$ actually verifies $\Lambda(ax) = a\Lambda(x)$ for every $a\in L_\infty$.
It is then a standard fact that $\Lambda$ must have the form $\Lambda(x) = gx$ on the subspace $Y_{\theta}^b$ of bounded elements of $Y_{\theta}$.
\smallskip

Since $Y_{\theta}$ is superreflexive, it is $\sigma$-order continuous (Theorem 1.a.5 and Proposition 1.a.7 of \cite{lindtzaf-2}. So $Y_{\theta}^b$
is dense in $Y_{\theta}$.

Now, there is also $h > 0$ such that $\|hx\|_{L_1} \leq \|x\|_{Y_j}$, $j = 0, 1$. The centralizer $\Psi_{\theta}$ is bounded as a map from $Y_{\theta}$
into $Y_0 + Y_1$, so it is bounded from $Y_{\theta}$ into $L_1(h d\mu)$. The same is true of $\Psi_{\theta} - \Lambda$, so $\Lambda$ is bounded from
$Y_{\theta}$ into $L_1(h d\mu)$.

Since $Y_{\theta}^b$ is dense in $Y_{\theta}$, this means that $gh$ defines an element $x^*$ of $Y_{\theta}^*$. Since $Y_{\theta}$ is $\sigma$-order continuous, we have $Y_{\theta}^* = Y_{\theta}'$, and therefore there if $l$ such that $x^* = l$. It follows that
\[
\int_E gh d\mu = \int_E l d\mu
\]
for every measurable set $E$. This means that $l = gh$.

Let $x \in Y_{\theta}$, and take a sequence $(x_n) \subset Y_{\theta}^b$ such that $x_n \rightarrow x$. Then, by the considerations above,
taking limits in $L_1(hd\mu)$,
\[
\Lambda(x) = \lim_n \Lambda(x_n) = \lim_n gx_n = gx
\]

So $\Lambda(x) = gx$ for every $x \in Y_{\theta}$.

Write $g = g_1 + ig_2$, with $g_1, g_2$ real functions. Now, formula (\ref{kpgeneralized}) shows that the centralizer $\Psi_\theta$ is real. So, for
every $x \in Y_{\theta}$ real we have
\[
\|\Psi_{\theta}(x) - g_1 x\|_{Y_{\theta}} \leq \|\Psi_{\theta}(x) - gx\|_{Y_{\theta}} \leq C \|x\|_{Y_{\theta}}
\]
for some constant independent of $x$.

For $x \in Y_{\theta}$ write $x = x_1 + i x_2$, with $x_1, x_2$ real. Then, for some constant $C'$ independent of $x$
\begin{eqnarray*}
\|\Psi_{\theta}(x) - g_1 x\|_{Y_{\theta}} & \leq & \|\Psi_{\theta}(x) - \Psi_{\theta}(x_1) - \Psi_{\theta}(ix_2)\|_{Y_{\theta}} \\
			&& + \|\Psi_{\theta}(x_1) - g_1 x_1\|_{Y_{\theta}} + \|\Psi_{\theta}(x_2) - g_1 x_1\|_{Y_{\theta}} \\
			& \leq & C'(\|x_1\|_{Y_{\theta}} + \|x_2\|_{Y_{\theta}}) \\
			& \leq & 2C' \|x\|_{Y_{\theta}}
\end{eqnarray*}
where we have used the quasilinearity of $\Psi$ and the lattice properties of $Y_{\theta}$.

We take $f = g_1$.
\smallskip

\noindent \textsc{Step 2.} \emph{The spaces $Y_0, Y_1$ are weighted versions of each other.}
\smallskip

Pick  $w_0 = e^{\theta f}$ and $w_1= e^{(\theta - 1)f}$. By the previous proposition,
$$
(Y_\theta(w_0), Y_\theta(w_1))_\theta = Y_\theta(w_0^{1-\theta}w_1^\theta) = Y_\theta
$$
with associated centralizer $\Omega (x) = \log (w_0/w_1) x = f x = \Upsilon(x)$.
Thus $\Psi_\theta - \Omega$ is bounded and, by part (3) of Theorem \ref{kalthm}, we get
$Y_0 = X_\theta(w_0)$ and $Y_1= X_\theta(w_1)$, up to a renorming.
\end{proof}

Theorem \ref{stsplit} implies that the map $\Omega_\theta$, when trivial, is a bounded perturbation
of a multiplication map. This is a consequence of the symmetry properties of the K\"othe space.
Now we can complete Corollary \ref{trivialhilbert} with the following result stating that twisted
Hilbert spaces induced by interpolation of K\"othe spaces are trivial only in the obvious cases.

\begin{prop}\label{twistedhilbert}
A twisted Hilbert space induced by interpolation at $\theta=1/2$ between a superreflexive K\"othe space
$X$ and its dual is trivial if and only if for some weight function $w$ we have $X=L_2(w)$ with equivalence
of norms.
\end{prop}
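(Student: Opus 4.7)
The plan is to prove the two directions separately, deducing the necessity from Theorem~\ref{stsplit} applied to the couple $(X,X^*)$ combined with the Calder\'on--Lozanovskii identification of $(X,X^*)_{1/2}$ with $L_2$, and deducing the sufficiency directly from Proposition~\ref{twist-weights}.

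For the \emph{sufficiency} direction, suppose $X=L_2(w)$ up to equivalence of norms. Then $X^*=L_2(1/w)$ up to equivalence, so the couple $(X,X^*)$ is, modulo renorming, of the form $(L_2(w_0),L_2(w_1))$ with $w_0=w$, $w_1=1/w$. Proposition~\ref{twist-weights} then identifies the interpolation space at $\theta=1/2$ with $L_2(w_0^{1/2}w_1^{1/2})=L_2$ and provides the explicit derivation $\Omega(x)=\log(w_0/w_1)\,x=2(\log w)\,x$. This $\Omega$ is linear, so taking $L=\Omega$ in Proposition~\ref{trivial-deriv} shows that $\Omega$ is trivial. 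Passing between equivalent interpolation couples only perturbs the derivation by a bounded map, so triviality persists for the original $\Omega_{1/2}$.

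For the \emph{necessity}, assume $\Omega_{1/2}$ is trivial. Since $X$ and $X^*$ are both superreflexive K\"othe spaces, Theorem~\ref{stsplit} applied to the pair $(X,X^*)$ produces a weight $u$ such that $X^*=X(u)$ up to equivalent renorming. I then compute $(X,X^*)_{1/2}$ in two different ways. On the one hand, the Calder\'on--Lozanovskii product formula $(X_0,X_1)_{1/2}=X_0^{1/2}X_1^{1/2}$, applied together with the Lozanovskii factorization $X\cdot X^*=L_1$ (with equivalent norms), gives $(X,X^*)_{1/2}=L_2$ up to equivalence: $x$ lies in $X^{1/2}(X^*)^{1/2}$ precisely when $|x|^2$ factors in $X\cdot X^*=L_1$, i.e.\ when $x\in L_2$. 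On the other hand, because $X^*=X(u)$ up to equivalence, Proposition~\ref{twist-weights} applied to $(X,X(u))$ at $\theta=1/2$ identifies the same interpolation space with $X(u^{1/2})$ up to equivalence. Matching the two descriptions gives $X(u^{1/2})=L_2$ up to equivalence, whence $X=L_2(u^{-1/2})$, and setting $w=u^{-1/2}$ closes the argument.

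The main obstacle is the identification $(X,X^*)_{1/2}=L_2$: this depends on the Calder\'on--Lozanovskii product formula and on Lozanovskii's factorization $XX^*=L_1$, neither of which is stated in the excerpt, so I will need to invoke them cleanly with explicit norm equivalence constants. A secondary technical issue is that the identification $X^*=X(u)$ coming from Theorem~\ref{stsplit} holds only up to equivalence of norms, not isometrically; I must ensure that substituting $X(u)$ for $X^*$ preserves triviality of the derivation, which follows from the standard fact that equivalent renormings of the endpoints produce derivations differing by bounded maps. Once these points are in place, the proof reduces to matching the two computations of $(X,X^*)_{1/2}$ and reading off the form of $X$.
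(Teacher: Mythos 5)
Your proof is correct and follows essentially the same route as the paper's: the paper's (very terse) argument also rests on the identification $X_{1/2}=L_2$ together with the conclusion of Theorem~\ref{stsplit} that the endpoints are weighted versions of $X_{1/2}$, which gives $X=L_2(w)$ and $X^*=L_2(w^{-1})$ up to equivalence. The only small difference is presentational: you use the \emph{statement} of Theorem~\ref{stsplit} (namely $X^*=X(u)$) and then recompute the midpoint via Proposition~\ref{twist-weights} and match with $L_2$, whereas the paper directly quotes what the \emph{proof} of Theorem~\ref{stsplit} establishes (all scale members are weighted versions of $X_\theta$); you also spell out the sufficiency direction and the Lozanovskii ingredients that the paper leaves to a citation, which is a reasonable expansion rather than a deviation.
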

\begin{proof}
If the twisted space is trivial then since $X_{1/2}=L_2$ (see, e.g., \cite{cfg}), and since spaces on the
whole scale are weighted versions of each other, $X$ and $X^*$ are equal to $L_2(w)$ and $L_2(w^{-1})$ with
equivalence of norms, respectively, for some weight.
\end{proof}

\subsection{Stability for families of three K\"othe spaces}\label{three}
%
Here we prove the global stability of both splitting and bounded splitting for interpolation families
consisting of three spaces distributed on arcs of $\T$.
The starting point is the generalization of the formula $X_\theta = X_0^{1-\theta}X_1^{\theta}$ for
families presented in \cite[Theorem 3.3]{kaltdiff} that Kalton credits to Hernandez \cite{Hernandez}.

In this section $\{A_1, ..., A_n\}$ will be a partition of $\T$ into arcs so that $A_i\cap A_j=\emptyset$
for $i\neq j$ and $\T= \cup_{j=1}^n A_j$.
Recall also that a K\"othe function space $X$ of $\mu$-measurable functions is said to be \emph{admissible}
\cite{kaltdiff} if $B_X$ is closed in $L_0(\mu)$ and there exist strictly positive $h,k\in L_0(\mu)$ such
that $\|xh\|_1\leq \|x\|_X\leq \|xk\|_\infty$ for each $x\in L_0(\mu)$.

\begin{defin}
Given K\"othe spaces $X_1, ..., X_n$ and positive numbers $a_1, ..., a_n$ we define
$$
\prod\limits_{j=1}^n X_j^{a_j} = \{f \in L_0 : \left |f\right| \leq
\prod\limits_{j=1}^n \left|f_j\right|^{a_j}, f_j \in X_j\}
$$
endowed with the norm $\|f\|_{\prod} = \inf\{\prod_{j=1}^n \|f_j\|_{X(j)}^{a_j}\}$, where the infimum
is taken over all choices of $f_j\in X_j$ so that $|f| \leq \prod_{j=1}^n \left|f_j\right|^{a_j}$.
\end{defin}

The following result provides the associated derivation map; we have included for the sake of clarity a
streamlined proof of the factorization theorem.

\begin{prop}\label{thm-centralizer}
Let $\{X_\omega\}_{\omega \in \T}$ be a strongly admissible family for which $X_\omega = X_j$ for
$\omega \in A_j$, $j = 1, ..., n$.
If $\mu_{z_0}$ denotes the harmonic measure on $\T$ with respect to $z_0$ one has
$$
X_{z_0} = \prod\limits_{j=1}^n X_j^{\mu_{z_0}(A_j)}.
$$
In particular, if $X$ is an admissible K\"othe function space, $w_j$ are weight functions
and $X_j = X(w_j)$, then the family $\{X_\omega\}_{\omega \in \T}$ as above is strongly admissible and
$X_{z_0} = X(\prod w_j^{\mu_{z_0}(A_j)})$ for $z_0\in \mathbb D$, with associated derivation
$\Omega_{z_0}(x) = - \Big( \sum\limits_j \psi_j'(z_0) \log w_j \Big) x$, where $\psi_j$ is an analytic
function on $\mathbb{D}$ such that $Re(\psi_j) = \chi_{A_j}$ on $\T$ and $\psi_j(z_0) = 0$.
\end{prop}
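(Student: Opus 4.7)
The plan is to prove the proposition in two stages: first establish the factorization $X_{z_0} = \prod_j X_j^{\mu_{z_0}(A_j)}$ via Kalton's calculus of indicators, and then specialize to the weighted case where one can write down an explicit extremal and differentiate it. Write $a_j = \mu_{z_0}(A_j)$, so $\sum_j a_j = 1$.

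\textbf{Factorization.} I would apply Theorem \ref{thm-interpolated-indicator} to the family $\{X_\omega\}$. Since $X_\omega = X_j$ on $A_j$, the boundary map $t \mapsto \Phi_{X_{e^{it}}}(f)$ is the simple function $\sum_j \Phi_{X_j}(f)\chi_{A_j}$, and hence on a suitable strict semi-ideal $\mathcal{I}$,
$$
\Phi_{X_{z_0}}(f) = \frac{1}{2\pi}\int_0^{2\pi}\Phi_{X_{e^{it}}}(f)P_{z_0}(e^{it})\,dt = \sum_{j=1}^n a_j\Phi_{X_j}(f).
$$
Next I would compute directly from the definition that the indicator of $Y:=\prod_j X_j^{a_j}$ satisfies $\Phi_Y(f) = \sum_j a_j \Phi_{X_j}(f)$: the inequality $\log\bigl(\prod|f_j|^{a_j}\bigr)=\sum_j a_j\log|f_j|$ together with the AM--GM type arguments used in Proposition \ref{twist-weights} gives both directions. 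Kalton's identification of admissible K\"othe spaces through their indicators on $\mathcal{I}$ then forces $X_{z_0}=Y$.

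\textbf{Weighted specialization and the derivation.} For $X_j=X(w_j)$ the identity $\prod_j X(w_j)^{a_j} = X\bigl(\prod_j w_j^{a_j}\bigr)$ is an immediate consequence of the defining infimum and the lattice estimate $\prod_j|w_jf_j|^{a_j}\geq\bigl(\prod_j w_j^{a_j}\bigr)\bigl|\prod_j f_j^{a_j}\bigr|$ already invoked in Proposition \ref{twist-weights}. This proves the ``in particular'' part. To compute $\Omega_{z_0}$ I would build an explicit $1$-extremal. Let $\psi_j$ be an analytic function on $\mathbb{D}$ whose real part is the harmonic extension of $\chi_{A_j}$, so that $\mathrm{Re}\,\psi_j(\omega)=\delta_{jk}$ for $\omega\in A_k\subset\T$ and $\mathrm{Re}\,\psi_j(z_0)=a_j$. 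Define
$$
F(z)=x\prod_{j=1}^n w_j^{\,a_j-\psi_j(z)}.
$$
Then $F(z_0)=x$, and for $\omega\in A_k$ one has $|F(\omega)|=|x|\bigl(\prod_j w_j^{a_j}\bigr)w_k^{-1}$, hence
$$
\|F(\omega)\|_{X_k} = \|w_kF(\omega)\|_X = \Bigl\|x\prod_j w_j^{a_j}\Bigr\|_X = \|x\|_{X_{z_0}},
$$
so $F$ is a $1$-extremal for $x$ at $z_0$. Differentiating the logarithm,
$$
\frac{F'(z)}{F(z)} = -\sum_j \psi_j'(z)\log w_j,
$$
and evaluating at $z=z_0$ gives $\Omega_{z_0}(x)=F'(z_0)=-x\sum_j \psi_j'(z_0)\log w_j$. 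Since $\psi_j'(z_0)$ is unchanged upon replacing $\psi_j$ by $\psi_j-\psi_j(z_0)$, we recover the normalization $\psi_j(z_0)=0$ stated in the proposition.

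\textbf{Main obstacles.} The genuine difficulty is the implication ``$\Phi_Y=\Phi_{X_{z_0}}$ on $\mathcal{I}$ $\Rightarrow$ $Y=X_{z_0}$''; this is the core of Kalton's indicator calculus in \cite{kaltdiff} and needs the semi-ideal $\mathcal{I}$ to be rich enough to separate unit balls of admissible K\"othe spaces. A secondary technicality is checking that the candidate extremal $F$ actually belongs to $\mathcal{N}^+(\mathbb{D})$: one must verify that, $\mu$-a.e., the slice $z\mapsto \prod_j w_j(s)^{a_j-\psi_j(z)}$ lies in $N^+$ and has the required boundary norm; this is where the admissibility of the weights and the $L^1(\log^+)$-integrability condition on the family enter.
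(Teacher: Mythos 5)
Your proof of the second half (the weighted specialization and the computation of $\Omega_{z_0}$) is essentially the same as the paper's: you build the identical explicit $1$-extremal $F(z)=x\prod_j w_j^{a_j-\psi_j(z)}$, verify the boundary norms, and take a logarithmic derivative. (One small point: for $F(z_0)=x$ you need $\psi_j(z_0)=a_j$ exactly, i.e.\ $\operatorname{Im}\psi_j(z_0)=0$ as well as $\operatorname{Re}\psi_j(z_0)=a_j$; your final remark that the constant is immaterial for $\psi_j'(z_0)$ takes care of the fact that the paper's stated normalization $\psi_j(z_0)=0$ is not literally compatible with $\operatorname{Re}\psi_j=\chi_{A_j}$ on $\T$, so the formula for $\Omega_{z_0}$ is the same under any constant shift.)

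For the factorization $X_{z_0}=\prod_j X_j^{a_j}$, however, you take a genuinely different route from the paper. The paper's proof is direct: it takes a $\phi\in\mathcal E$ realizing the norm of $f\in X_{z_0}$, applies Jensen's inequality across the arcs $A_j$ to manufacture the factors $f_j$ and get $\|f\|_\prod\le\|f\|_{X_{z_0}}$, and for the converse plugs $\phi(s,\omega)=\prod_j|f_j(s)|^{\varphi_j(\omega)}$ back into Kalton's characterization of $\|\cdot\|_{X_{z_0}}$. Your proof instead computes on both sides the Kalton indicator: $\Phi_{X_{z_0}}=\sum_j a_j\Phi_{X_j}$ from Theorem~\ref{thm-interpolated-indicator}, $\Phi_{\prod X_j^{a_j}}=\sum_j a_j\Phi_{X_j}$ by direct estimate, and then invokes the injectivity of $X\mapsto\Phi_X$ on admissible K\"othe spaces (essentially \cite[Theorem 4.3]{kaltdiff}) to conclude. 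This is shorter and conceptually clean, and it does deliver isometric equality (Kalton's Theorem 4.3 gives norm comparison both ways from indicator inequalities on a strict semi-ideal), but it carries more technical debt than the paper's argument: you should actually verify the identity $\Phi_{\prod X_j^{a_j}}=\sum_j a_j\Phi_{X_j}$ rather than appeal to ``AM--GM type arguments'' (both inequalities do hold, using $\Phi_X(cf)=c\Phi_X(f)$ to normalize $\int f\,d\mu$ and the convexity of $B_{\prod X_j^{a_j}}$, but it is a separate lemma, not a reuse of Proposition~\ref{twist-weights}); you should check that the strict semi-ideal $\mathcal I$ produced by Theorem~\ref{thm-interpolated-indicator} sits inside $\mathcal I_{\prod X_j^{a_j}}$ so that the uniqueness theorem actually applies; and in the ``in particular'' clause you must still verify that the weighted family $\{X(w_j)\}$ is strongly admissible before the first part can be applied, which the paper does explicitly and you pass over. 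None of these is a fatal flaw, but they are the places where a careless reading of the indicator calculus could go wrong, and the paper's elementary Jensen argument avoids all of them.
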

%
\begin{proof}
Pick $f \in X_{z_0}$. We are going to use \cite[Lemma 3.2 and Theorem 3.3]{kaltdiff}.
To this end recall that if $\mathcal E$ denotes the K\"othe function space on $\mathbb D\times \T$
with norm $\|\phi\|_{\mathcal E} = \esssup \|\phi(\cdot, e^{i\theta})\|_{X_{e^{i\theta}}}$
then there is $\phi\in \mathcal E$ so that $\|\phi\|_{\mathcal E} = \|f\|_{X_{z_0}}$ and
$$
\left|f(s)\right| = \exp \left(\int_{\T} P_{z_0}(\omega) \log \phi(s, \omega) d\omega\right).
$$

By Jensen's inequality
$$
\left|f(s)\right| \leq \prod_{j=1}^n \Bigg(\frac{1}{\mu_{z_0}(A_j)} \int_{A_j}
\phi(s, \omega) P_{z_0}(\omega) d\omega\Bigg)^{\mu_{z_0}(A_j)}.
$$

Set $f_j(s) = \frac{1}{\mu_{z_0}(A_j)} \int_{A_j} \phi(s, \omega) P_{z_0}(\omega) d\omega$ so that
$$
\|f_j\|_{X(j)} = \left\|\frac{1}{\mu_{z_0}(A_j)} \int_{A_j} \phi(\cdot,\omega) P_{z_0}(z) d\omega\right\|_{X(j)}
\leq  \frac{1}{\mu_{z_0}(A_j)} \int_{A_j} \|\phi(\cdot, \omega)\|_{X(j)} P_{z_0}(\omega) d\omega
\leq \|\phi\|_{\mathcal{E}}.
$$
Then $f_j \in X(j)$ and $\left|f\right| \leq \prod \left|f_j\right|^{\mu_{z_0}(A_j)}$, and thus
$\|f\|_{\prod} \leq\prod\|f_j\|_{X(j)}^{\mu(A_j)} \leq \|\phi\|$. So $\|f\|_{\prod} \leq \|f\|_{X_{z_0}}$.
Assume now that $\left|f\right| \leq \prod \left|f_j\right|^{\mu_{z_0}(A_j)}$, and let $\phi$ be given by
$\phi(s, \omega) = \prod \left|f_j(s)\right|^{\varphi_j(\omega)},$
where $\varphi_j$ is a harmonic function which coincides with $\chi_{A_j}$ on $\T$, $j = 1, ..., n$.
Then $\varphi_j(z_0) = \mu_{z_0}(A_j)$, and
\begin{eqnarray*}
\left|f(s)\right| & \leq & \prod \left|f_j(s)\right|^{\varphi_j(z_0)} =
\exp \left(\log \prod \left|f_j(s)\right|^{\varphi_j(z_0)}\right) \\
        & = & \exp\left( \sum \log \left|f_j(s)\right|^{\varphi_j(z_0)}\right)= 
        \exp \left(\sum \int_{\T} \log \left|f_j(s)\right|^{\varphi_j(\omega)} P_{z_0}(\omega) d\omega\right) \\
        & = & \exp \left(\sum \int_{A_j} \log \left|f_j(s)\right|^{\varphi_j(\omega)} P_{z_0}(\omega)d\omega\right) \\
        & = & \exp\left(\sum \int_{A_j} \log \prod_k \left|f_k(s)\right|^{\varphi_k(\omega)} P_{z_0}(\omega) d\omega\right) \\
        & = & \exp\left(\int_{\T} \log \prod \left|f_j(s)\right|^{\varphi_j(\omega)} P_{z_0}(\omega) d\omega\right)= 
        \exp\left(\int_{\T} \log \phi(s, \omega) P_{z_0}(\omega) d\omega\right).
\end{eqnarray*}

Therefore, $\|f\|_{X_{z_0}} \leq \|\phi\| = \max \|f_j\|_{X(j)}$.
If we multiply each $f_j$ by $\frac{\prod \|f_i\|_{X(i)}^{\mu_{z_0}(A_i)}}{\|f_i\|_{X(i)}}$ then
we still have that $\left|f(s)\right| \leq \prod \left|f_j(s)\right|^{\mu_{z_0}(A_j)}$ and
$\|f\|_{X_{z_0}} \leq \prod \|f_j\|_{X(j)}^{\mu_{z_0}(A_j)}$.
Since the functions $f_j$ are arbitrary, we get $\|f\|_{X_{z_0}} \leq \|f\|_{\prod}$.
\smallskip

For the second part, let $h_1$ and $k_1$ be functions that show that $X$ is admissible.
We set $h = h_1 \min w_j$ and $k = k_1 \max w_j$.
Then $h$ and $k$ are such that $\|xh\|_1\leq \|x\|_z\leq \|xk\|_{\infty}$ for every $x\in X$ and $z\in \T$.
Also $\|x w_j h_1\| \leq \|x\|_{X(w_j)} \leq \|x w_j k_1\|_{\infty}$.
Since it is clear that $B_{X(w_j)}$ is closed in $L_0$, each space $X(w_j)$ is admissible.

Selecting a countable dimensional dense subspace $Y$ of $X$, and taking as $V$ is the subspace generated
by $\{w_j^{-1}x : x \in Y, 1 \leq j \leq n\}$, the family $\{X_\omega\}_{\omega \in \T}$ is strongly
admissible with $V\cap B_{X_z}$ $L_0$-dense in $B_{X_z}$ for a.\ e.\ $z\in \T$.
Since $X_{z_0} = \prod X_j^{\mu_{z_0}(A_j)}$, for every $x \in X_{z_0}$ one has
\begin{eqnarray*}
\|x\|_{z_0} & = & \inf\left \{\prod \|x_j\|_{X_j}^{\mu_{z_0}(A_j)} :
     \left|x\right| \leq \prod \left|x_j\right|^{\mu_{z_0}(A_j)}\right\} \\
		& = & \inf\left\{\prod \|w_j x_j\|_X^{\mu_{z_0}(A_j)} : \left|x\right| \leq
   \prod \left|x_j\right|^{\mu_{z_0}(A_j)}\right\} \\
        & \geq & \inf\left\{\|\prod (w_j x_j)^{\mu_{z_0}(A_j)}\|_X : \left|x\right| \leq
   \prod \left|x_j\right|^{\mu_{z_0}(A_j)}\right\} \\
        & \geq & \inf \left\{\|\prod w_j^{\mu_{z_0}(A_j)} x\|_X : \left|x\right| \leq
   \prod \left|x_j\right|^{\mu_{z_0}(A_j)}\right\} \\
        & = &    \|x\|_{X(\prod w_j^{\mu_{z_0}(A_j)})},
\end{eqnarray*}
where the first inequality follows from $\|x^{\theta} y^{1 - \theta}\| \leq \|x\|^{\theta} \|y\|^{1-\theta}$
and an induction argument, and the second one from $\left|x\right| \leq \prod \left|x_j\right|^{\mu_{z_0}(A_j)}$.
Moreover, taking $f_j = \prod_k w_k^{\mu_{z_0}(A_k)} w_j^{-1} x$, we get the reverse inequality.
Now let
$$
F(z) = \prod_k w_k^{\mu_{z_0}(A_k)} \frac{x}{\prod_j w_j^{\psi_j(z)}}.
$$
Then $F \in \mathcal{N}^+(\mathcal{H})$, $F(z_0) = x$, and for $\omega \in A_j$ one has
$$
\|F(\omega)\|_{X_j} = \left\|w_j \prod w_k^{\mu_{z_0}(A_k)} \frac{x}{w_j}\right\|_X = \|x\|_{X_{z_0}}.
$$
Therefore $F$ is a $1$-extremal function for $x$. Moreover
$$
\Omega_{z_0}(x) =  F\;'(z_0)  =  - \prod w_j^{\mu_{z_0}(A_j)} x \prod\limits_k w_k^{-\psi_k(z_0)}
\sum\limits_j \psi_j'(z_0) \log w_j  =  - \Big( \sum\limits_j \psi_j'(z_0) \log w_j \Big) x,
$$
and the proof is done.
\end{proof}

We now pass to the study of the (bounded) stability.
We first observe that the extension to $\overline\D$ of a conformal transformion on $\D$ taking $z_0$
to $0$ takes an arc of $\T$ onto an arc of $\T$.
So we can assume without loss of generality that $z_0=0$, and so we will do throughout this section.
We will also consider three arcs $A_0 = [\theta_0, \theta_1)$, $A_1 = [\theta_1, \theta_2)$ and
$A_2 = [\theta_2, \theta_0)$ forming a partition of $\T$.

\begin{lemma}\label{lem-li}\label{lem-system}
For $j = 0,1,2$, set $\alpha_j = \frac{1}{2\pi}\int_{A_j} P_{0}(e^{it}) dt$ and
$\beta_j = \frac{1}{2\pi}\int_{A_j} e^{-it} dt$.
Then the vectors $a = (\alpha_0, \alpha_1, \alpha_2)$, $b = (Re(\beta_0), Re(\beta_1), Re(\beta_2))$
and $c = (Im(\beta_0), Im(\beta_1), Im(\beta_2))$
%
%
are linearly independent in $\R^3$. Consequently, we can find $a_j \in \mathbb{C}$ such that
$\sum a_j \alpha_j = 0$ and $\sum a_j \beta_j = -1$.
\end{lemma}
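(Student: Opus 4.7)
The plan is to establish the linear independence of $a,b,c$ in $\R^3$ by a sign-change argument for trigonometric polynomials of degree one, and then to derive the existence of the $a_j$'s by elementary linear algebra.

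Using $P_0(e^{it})=1$, $\mathrm{Re}(e^{-it})=\cos t$ and $\mathrm{Im}(e^{-it})=-\sin t$, one checks that for any $(\lambda,\mu,\nu)\in\R^3$ and each $j$,
\[
\lambda\alpha_j+\mu\,\mathrm{Re}(\beta_j)+\nu\,\mathrm{Im}(\beta_j) \;=\; \frac{1}{2\pi}\int_{A_j}f(t)\,dt, \qquad f(t):=\lambda+\mu\cos t-\nu\sin t.
\]
Thus a nontrivial relation $\lambda a+\mu b+\nu c=0$ is equivalent to $\int_{A_j}f\,dt=0$ for $j=0,1,2$ with some $(\lambda,\mu,\nu)\neq 0$. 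The key point is that $f$ is a trigonometric polynomial of degree at most one: writing $f(t)=\lambda+R\cos(t+\varphi)$ with $R=\sqrt{\mu^2+\nu^2}$, it has at most two zeros on $\T$ (and none when $R=0$ and $\lambda\neq 0$). Since $\{A_0,A_1,A_2\}$ partitions $\T$ into three arcs, pigeonhole produces an arc $A_j$ whose interior contains no zero of $f$; there $f$ is continuous and nonvanishing, hence of constant nonzero sign, so $\int_{A_j}f\,dt\neq 0$. This contradiction shows that $a,b,c$ are $\R$-linearly independent.

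For the consequence, write $a_j=x_j+iy_j$ and split the two complex equations $\sum_j a_j\alpha_j=0$ and $\sum_j a_j\beta_j=-1$ into real and imaginary parts. This yields a $4\times 6$ real linear system whose coefficient matrix has block form
\[
\begin{pmatrix} a & 0 \\ 0 & a \\ b & -c \\ c & b \end{pmatrix}.
\]
If some combination $p_1(a,0)+p_2(0,a)+p_3(b,-c)+p_4(c,b)$ of its rows vanishes, the first three coordinates give $p_1a+p_3b+p_4c=0$ and the last three give $p_2a+p_4b-p_3c=0$; the $\R$-linear independence of $a,b,c$ then forces $p_1=p_2=p_3=p_4=0$. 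The matrix has full row rank and the system is solvable for any right-hand side, yielding the required $a_j$'s.

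I expect the only technical care to be needed in the sign-change step, in the borderline case when a zero of $f$ lands at an endpoint shared by two arcs. This is harmless, however: either some $A_j$ has no zero of $f$ in its interior (and the argument above applies), or else all three arcs have a zero of $f$ in their interiors, which is impossible since $f$ has at most two zeros on $\T$.
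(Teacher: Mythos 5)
Your proof is correct but follows a genuinely different route from the paper's. The paper first observes that $\sum_j\alpha_j=1$ while $\sum_j\beta_j=0$, which rules out $a$ being a linear combination of $b$ and $c$, and then reduces the problem to showing that $b$ is not a scalar multiple of $c$. This is settled by an explicit trigonometric computation: a $2\times 2$ minor of the matrix with rows $b,c$ evaluates, up to a nonzero factor, to $\sin\frac{\theta_0-\theta_1}{2}\sin\frac{\theta_2-\theta_1}{2}\sin\frac{\theta_2-\theta_0}{2}$, which is nonzero because the arc endpoints $\theta_0,\theta_1,\theta_2$ are distinct. Your argument is more conceptual and bypasses that computation entirely: a nontrivial relation $\lambda a+\mu b+\nu c=0$ forces the nonzero degree-one trigonometric polynomial $f(t)=\lambda+\mu\cos t-\nu\sin t$ to integrate to zero over each of the three arcs, which is impossible since $f$ has at most two zeros on $\T$ and hence keeps a constant sign on the interior of at least one arc. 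This pigeonhole argument also makes transparent why the statement should fail for four or more arcs, which fits the theme of the section; the paper's computation is more self-contained but does not suggest this. One small simplification for your treatment of the consequence: since the $\alpha_j$ are real and the right-hand side of the $\alpha$-equation is $0$, you may take the $a_j$ real, so the relevant system is simply the $3\times 3$ real system with rows $a,b,c$, which is invertible by the independence just proved; your $4\times 6$ block argument is correct but not needed (and indeed the later Lemma \ref{preparatory} uses real $a_j$).
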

\begin{proof}
We begin by noticing that $\sum \alpha_j = 1$ and $\sum \beta_j = 0$.
So $a$ cannot be written as a linear combination of $b$ and $c$.
Also, the only way for $\{a, b, c\}$ to be linearly dependent is if $b$ is a multiple of $c$.
We have
$$
\frac{-i}{2}\beta_0 = - \sin \frac{\theta_1 - \theta_0}{2} \sin \frac{\theta_1 + \theta_0}{2} +
i \sin \frac{\theta_0 - \theta_1}{2} \cos \frac{\theta_0 + \theta_1}{2},\quad \textrm{and}
$$
$$
\frac{-i}{2}\beta_1 = - \sin \frac{\theta_2 - \theta_1}{2} \sin \frac{\theta_2 + \theta_1}{2}
+ i \sin \frac{\theta_1 - \theta_2}{2} \cos \frac{\theta_1 + \theta_2}{2}
$$
So, if we consider the matrix with lines $b$ and $c$, up to a factor of $\frac{-i}{2}$ the
determinant of the first two columns is
\begin{eqnarray*}
&-& \sin \frac{\theta_1 - \theta_0}{2} \sin \frac{\theta_1 + \theta_0}{2} \sin \frac{\theta_1 - \theta_2}{2}
   \cos \frac{\theta_1 + \theta_2}{2} \\
&+& \sin \frac{\theta_0 - \theta_1}{2} \cos \frac{\theta_0 + \theta_1}{2} \sin \frac{\theta_2 - \theta_1}{2}
   \sin \frac{\theta_2 + \theta_1}{2} \\
&=& \sin \frac{\theta_0 - \theta_1}{2} \sin \frac{\theta_2 - \theta_1}{2} \Bigg(\cos \frac{\theta_0 + \theta_1}{2}
   \sin \frac{\theta_2 + \theta_1}{2}- \sin \frac{\theta_1 + \theta_0}{2} \cos \frac{\theta_1 + \theta_2}{2} \Bigg) \\
&=& \sin \frac{\theta_0 - \theta_1}{2} \sin \frac{\theta_2 - \theta_1}{2} \sin \frac{\theta_2 - \theta_0}{2}
\end{eqnarray*}
which is zero if and only if two of the $\theta_j's$ are equal, which is not the case.
\end{proof}

The expressions for $\alpha_j$ and $\beta_j$ are provided by Theorems \ref{thm-interpolated-indicator}
and \ref{thm-indicator-of-centralizer}.

It follows from Kalton's Theorem \ref{kalthm} that given two interpolation couples $(X_0, X_1)$ and
$(Y_0, Y_1)$ such that $(X_0, X_1)_\theta = (Y_0, Y_1)_\theta$ (up to renorming) and
$\Omega_\theta = \Upsilon_\theta$ (up to a bounded map) for some $0<\theta<1$, then $X_0=Y_0$ and $X_1=Y_1$
(up to renorming).
Next we give the version for \emph{three spaces on arcs} of that result. The proof is essentially an adaptation of the proof of the uniqueness part of Theorem 7.6 of \cite{kaltdiff}. Compare with Theorem 7.9 of \cite{kaltdiff}, where the existence is provided.

\begin{theorem}\label{thm-unicity}
Let $\{X_\omega: \omega \in \mathbb T\} $ and $\{Y_\omega: \omega \in \mathbb T\}$ be two strongly
admissible families with $X_\omega = X^j$ and $Y_\omega = Y^j$ for $\omega \in \{e^{it} : t \in A_j\}$,
$j = 0, 1, 2$.
Let $\Omega_0$ and $\Psi_0$ be the corresponding derivations at $z_0=0$.
If $X_0 = Y_0$ (up to renorming) and $\Omega_0 - \Psi_0$ is bounded then $X^j = Y^j$  (up to renorming)
for $j=0,1,2$.
Moreover, $\Omega_z - \Psi_z$ is bounded for every $z\in \mathbb D$.
\end{theorem}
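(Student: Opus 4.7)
The plan is to reduce everything to Kalton's indicator functions and then exploit the linear independence from Lemma \ref{lem-system}. Since each family is piecewise constant on the three arcs $A_0, A_1, A_2$, Theorem \ref{thm-interpolated-indicator} and Theorem \ref{thm-indicator-of-centralizer}(2) applied at $z_0=0$ collapse, on a common strict semi-ideal, to the identities
\begin{align*}
\Phi_{X_0} &= \alpha_0\Phi_{X^0}+\alpha_1\Phi_{X^1}+\alpha_2\Phi_{X^2},\\
\Phi^{\Omega_0} &= \beta_0\Phi_{X^0}+\beta_1\Phi_{X^1}+\beta_2\Phi_{X^2},
\end{align*}
with $\alpha_j,\beta_j$ as in Lemma \ref{lem-li}, and analogous identities for the $Y$-family with $\Psi_0$ in place of $\Omega_0$.

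Next I would translate both hypotheses into boundedness of indicator differences. Equivalence of the norms on $X_0$ and $Y_0$ yields $|\Phi_{X_0}(f)-\Phi_{Y_0}(f)|\le C\int f\,d\mu$; the boundedness of $\Omega_0-\Psi_0:X_0\to X_0$ transfers, via the Lozanovskii factorization $L_1=X_0X_0^*$ and the definition $\Phi^\Omega(f)=\int\Omega^{[1]}(f)\,d\mu$ (which is linear in $\Omega$), into $|\Phi^{\Omega_0}(f)-\Phi^{\Psi_0}(f)|\le C'\int f\,d\mu$. Setting $\Delta_j=\Phi_{X^j}-\Phi_{Y^j}$, subtracting the $X$ and $Y$ identities, and splitting the complex second equation into its real and imaginary parts, I get
$$
\begin{pmatrix}
\alpha_0 & \alpha_1 & \alpha_2\\
\mathrm{Re}\,\beta_0 & \mathrm{Re}\,\beta_1 & \mathrm{Re}\,\beta_2\\
\mathrm{Im}\,\beta_0 & \mathrm{Im}\,\beta_1 & \mathrm{Im}\,\beta_2
\end{pmatrix}
\begin{pmatrix}\Delta_0\\ \Delta_1\\ \Delta_2\end{pmatrix}
= \text{(bounded data)}.
$$
By Lemma \ref{lem-system} the matrix is invertible, so each $\Delta_j$ is bounded on the common semi-ideal.

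The heart of the proof is then to reverse the indicator, following the uniqueness half of \cite[Theorem 7.6]{kaltdiff}: a bounded difference $\Phi_{X^j}-\Phi_{Y^j}$ on a sufficiently large strict semi-ideal forces $X^j=Y^j$ up to equivalent renorming, which gives the first conclusion for $j=0,1,2$. For the second, once each arc carries the same K\"othe space up to uniform renorming, the two families are uniformly equivalent on $\T$; a perturbation argument for the construction of $\mathcal{N}^+(\mathbb D)$ (extending the explicit computation of Proposition \ref{thm-centralizer}, where a rescaling of the weights $w_j$ by constants $c_j$ contributes the linear term $(\sum_j\psi_j'(z)\log c_j)\,x$ to the derivation) shows that $\Omega_z-\Psi_z$ differs at every $z\in\mathbb D$ by at most a bounded linear map, giving the second conclusion.

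The principal obstacle is the reverse step from indicators back to spaces: one must verify that the strict semi-ideals attached to the various $X^j$ and $Y^j$ are compatible so that all three bounded-indicator conditions can be imposed on a single common testing ideal, and only then apply Kalton's indicator-recognition machinery. The linear algebra is routine given Lemma \ref{lem-system}, but it is worth emphasizing that the non-degeneracy of the $3\times 3$ matrix is the entire reason the three-arc setting behaves well; the same argument collapses for four or more arcs, pinpointing exactly where global (bounded) stability breaks down.
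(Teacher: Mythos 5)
Your proposal follows essentially the same route as the paper's proof: reduce to Kalton's indicator functions via Theorems \ref{thm-interpolated-indicator} and \ref{thm-indicator-of-centralizer}, obtain the two $3\times 3$ linear systems relating $\Phi_{X^j}$ (resp.\ $\Phi_{Y^j}$) to $\Phi_{X_0}$, $\operatorname{Re}\Phi^{\Omega_0}$, $\operatorname{Im}\Phi^{\Omega_0}$ (resp.\ the $Y$-quantities), convert the two hypotheses into uniformly bounded differences of the right-hand sides, invert by Lemma \ref{lem-li}, and then recover the spaces from the resulting bounded differences $\Phi_{X^j}-\Phi_{Y^j}$. For that last recognition step the paper cites \cite[Proposition 4.5]{kaltdiff}; you reference the uniqueness part of \cite[Theorem 7.6]{kaltdiff}, and these are consistent — the paper itself introduces the proof as ``an adaptation of the proof of the uniqueness part of Theorem 7.6.'' Your translation of the hypotheses into estimates of the form $|\Phi_{X_0}(f)-\Phi_{Y_0}(f)|\le C\int f\,d\mu$ and $|\Phi^{\Omega_0}(f)-\Phi^{\Psi_0}(f)|\le C'\int f\,d\mu$ is also what the paper uses, just more explicitly stated on your end.

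For the final assertion ($\Omega_z-\Psi_z$ bounded for every $z$), the printed proof actually stops after $X^j=Y^j$ and leaves this implicit, so you are supplying more than the paper does. Your idea is correct, but the parenthetical about rescaling the weights $w_j$ by constants is a distraction here: the conclusion is $X^j=Y^j$ up to \emph{equivalence} of norms, not a weighted relationship, so the clean way to finish is exactly the argument the paper does write out in the proof of Theorem \ref{weighthree} — since the two families generate the same Kalton space $\mathcal{N}^+$ with equivalent norms, pick $(1+\epsilon)$-extremals $F_x$ and $G_x$ for $x$ in the two spaces, and bound $\|\Omega_z(x)-\Psi_z(x)\|_{X_z}=\|\delta'_z(F_x-G_x)\|_{X_z}$ by $\|\delta'_z:\ker\delta_z\to X_z\|\cdot(\|F_x\|+\|G_x\|)$. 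Citing that mechanism rather than a ``perturbation of $\mathcal{N}^+$'' would make the sketch precise. Your final remark on the compatibility of semi-ideals is a real point of care, and it is handled in Kalton's machinery exactly as you anticipate.
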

%
%
\begin{proof}
Since $\Omega_0$ and $\Psi_0$ are equivalent, so are $\Omega_0^{[1]}$ and $\Psi_0^{[1]}$ by definition,
and then
$$
d(\Phi^{\Omega_0}, \Phi^{\Psi_0}) =
\sup\limits_{\|f\| \leq 1, f \in \mathcal{I}} \left|\Phi^{\Omega_0}(f)-\Phi^{\Psi_0}(f) \right|< \infty.
$$

We can use now Theorems \ref{thm-interpolated-indicator}, \ref{thm-indicator-of-centralizer}
and Lemma \ref{lem-li} to get equations that determine $\Phi_{X^j}$ in terms of $\Phi_{X_0}$,
$Re(\Phi^{\Omega_0})$ and $Im(\Phi^{\Omega_0})$; and the same  for $\Phi_{Y^j}$ in terms of
$\Phi_{Y_0} = \Phi_{X_0}$, $Re(\Phi^{\Psi_0})$, and $Im(\Phi^{\Psi_0})$.
More specifically, on a suitable strict semi-ideal one has:
\begin{eqnarray*}
\alpha_0 \Phi_{X^0} + \alpha_1 \Phi_{X^1} + \alpha_2 \Phi_{X^2} & = & \Phi_{X_0} \\
Re(\beta_0) \Phi_{X^0} + Re(\beta_1) \Phi_{X^1} + Re(\beta_1) \Phi_{X^2} & = & Re(\Phi^{\Omega_0}) \\
Im(\beta_0) \Phi_{X^0} + Im(\beta_1) \Phi_{X^1} + Im(\beta_1) \Phi_{X^2} & = & Im(\Phi^{\Omega_0})
\end{eqnarray*}
\begin{eqnarray*}
\alpha_0 \Phi_{Y^0} + \alpha_1 \Phi_{Y^1} + \alpha_2 \Phi_{Y^2} & = & \Phi_{Y_0} \\
Re(\beta_0) \Phi_{Y^0} + Re(\beta_1) \Phi_{Y^1} + Re(\beta_1) \Phi_{Y^2} & = & Re(\Phi^{\Psi_0}) \\
Im(\beta_0) \Phi_{Y^0} + Im(\beta_1) \Phi_{Y^1} + Im(\beta_1) \Phi_{Y^2} & = & Im(\Phi^{\Psi_0})
\end{eqnarray*}

Lemma \ref{lem-li} establishes that there is a unique solution for the numerical system
\begin{eqnarray*}
\alpha_0 x + \alpha_1 y  + \alpha_2 z & = & a \\
Re(\beta_0) x + Re(\beta_1) y + Re(\beta_1) z & = & b \\
Im(\beta_0) x + Im(\beta_1) y + Im(\beta_1)z & = & c
\end{eqnarray*}
and two uniformly bounded sets of data $(a(x), b(x), c(x))$ and $(a'(x), b'(x), c'(x))$ with bounded
difference will produce two solutions $\Phi_{X^j}$ and $\Phi_{Y^j}$ with bounded difference.
So we can use \cite[Proposition 4.5]{kaltdiff}
%
to conclude that $X^j = Y^j$ (up to renorming) for $j=0,1,2$.
\end{proof}

Now, after a preparatory lemma, we consider the stability results for three K\"othe spaces.

\begin{lemma}\label{preparatory}
Let $X$ be a K\"othe function space and let $f\in L_0(\mu)$.
Then there are weight functions $\omega_j$ such that taking $Y_\omega = X(\omega_j)$ for
$\omega \in \{e^{it} : t \in A_j\}$ and $j = 0, 1, 2$, the strongly admissible family
$\{Y_\omega : \omega \in \mathbb{T}\}$ yields $Y_0 = X$ with (linear) derivation
map $\Omega_0 (x)= f x$.
\end{lemma}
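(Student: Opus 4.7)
The plan is to take $Y_j = X(w_j)$ with weights of the form $w_j = e^{\lambda_j f}$ for scalars $\lambda_0,\lambda_1,\lambda_2 \in \R$ to be chosen. When $f$ is real-valued (the relevant case for real centralizers), each $w_j$ is a genuine positive weight, so the second half of Proposition \ref{thm-centralizer} immediately gives that the family $\{Y_\omega\}_{\omega\in\T}$ is strongly admissible, and moreover provides the formulas
\[
Y_0 = X\Bigl(\prod_j w_j^{\alpha_j}\Bigr) = X\bigl(e^{(\sum_j \alpha_j\lambda_j) f}\bigr), \qquad
\Omega_0(x) = -\Bigl(\sum_j \psi_j'(0)\,\lambda_j\Bigr) f\cdot x.
\]
Thus the lemma is reduced to choosing $\lambda_j \in \R$ satisfying the two numerical conditions $\sum_j \alpha_j \lambda_j = 0$ (so that $Y_0 = X$) and $\sum_j \psi_j'(0)\,\lambda_j = -1$ (so that $\Omega_0(x) = fx$).

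The next step is a short computation of $\psi_j'(0)$. Using the Schwarz--Poisson representation
\[
\psi_j(z) = \frac{1}{2\pi}\int_0^{2\pi}\chi_{A_j}(e^{it})\,\frac{e^{it}+z}{e^{it}-z}\,dt,
\]
whose real part on $\T$ is $\chi_{A_j}$, a differentiation under the integral sign gives
\[
\psi_j'(0) = \frac{1}{\pi}\int_{A_j} e^{-it}\,dt = 2\beta_j.
\]
Hence the second condition becomes $\sum_j \beta_j \lambda_j = -1/2$, which is equivalent to the pair of real equations $\sum_j \textrm{Re}(\beta_j)\,\lambda_j = -1/2$ and $\sum_j \textrm{Im}(\beta_j)\,\lambda_j = 0$. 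Together with $\sum_j \alpha_j \lambda_j = 0$, this is a $3\times 3$ real linear system in $\lambda_0,\lambda_1,\lambda_2$ whose coefficient matrix has rows $a,b,c$ in the notation of Lemma \ref{lem-li}.

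Since Lemma \ref{lem-li} asserts that $a,b,c$ are linearly independent in $\R^3$, the system has a unique real solution. Substituting this $(\lambda_0,\lambda_1,\lambda_2)$ back into the formulas of the first paragraph yields $Y_0 = X(e^{0}) = X$ and $\Omega_0(x) = f x$, which is what we wanted.

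The only real obstacle I foresee is the requirement that the $w_j$ be positive weight functions: this forces $\lambda_j f$ to be real, which is why the argument is written assuming $f$ is real-valued. This restriction matches the framework of real centralizers used throughout the paper; if one needs complex $f$, one can run the construction separately on $\textrm{Re}(f)$ and $\textrm{Im}(f)$ and combine the resulting linear derivations.
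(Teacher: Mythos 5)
Your construction is essentially the paper's: the same exponential weights built from $f$, the same appeal to the second half of Proposition \ref{thm-centralizer} for strong admissibility and for the formulas for $Y_0$ and $\Omega_0$, and the same reduction to the $3\times 3$ linear system whose solvability is Lemma \ref{lem-li}. The one loose end is the complex case: the lemma is stated for arbitrary $f\in L_0(\mu)$, which in this paper means complex-valued, and your closing remark about ``running the construction separately on $\mathrm{Re}(f)$ and $\mathrm{Im}(f)$ and combining the derivations'' is too vague as it stands, since one cannot add derivations coming from two different interpolation families. The correct way to combine --- and this is exactly what the paper does --- is to multiply the weights: write $f=f_1+if_2$, solve the system once with real unknowns $(a_j)$ and complex target $\sum_j a_j\beta_j=-1$ (normalizing as the paper does) together with $\sum_j a_j\alpha_j=0$, and once with real unknowns $(b_j)$ and target $\sum_j b_j\beta_j=-i$; the full linear independence of $a,b,c$ in Lemma \ref{lem-li} is needed precisely to hit the complex target $-i$ with real coefficients. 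Setting $w_j=e^{a_jf_1+b_jf_2}$ keeps the weights positive, and since the derivation formula is linear in $\log w_j$ the two contributions add, giving $\Omega_0(x)=(f_1+if_2)x=fx$. Finally, a remark in your favor: your computation $\psi_j'(0)=\frac1\pi\int_{A_j}e^{-it}\,dt=2\beta_j$ from the Schwarz integral is correct, whereas the paper's proof uses the target $-1$ rather than $-1/2$, i.e.\ implicitly takes $\psi_j'(0)=\beta_j$ (the normalization consistent with Theorem \ref{thm-indicator-of-centralizer} rather than with Proposition \ref{thm-centralizer} as literally stated); this factor of $2$ only rescales the solution $(\lambda_j)$ and is immaterial for every use of the lemma, but you were right to track it.
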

\begin{proof}
Write $f = f_1 + i f_2$. From Lemma \ref{lem-system} we have that there are real numbers
$a_0, a_1, a_2$ such that $\sum a_j \alpha_j = 0$ and $\sum a_j \beta_j = -1$.
Also, there are real numbers $b_0, b_1, b_2$ such that $\sum b_j \alpha_j =0$ and $\sum b_j \beta_j =-i$.
Set $w_j = e^{a_j f_1 + b_j f_2}$. Then $Y_0 = X(w_0^{\alpha_0} w_1^{\alpha_1} w_2^{\alpha_2}) = X$
and $\Omega_{0} = \sum -\beta_j (a_1 f_1 + b_j f_2) = f$.
\end{proof}

\begin{theorem}\label{weighthree}
Let $\{X_\omega : \omega \in \mathbb{T}\}$ be a (strongly) admissible family with $X_\omega = X^j$ for
$\omega \in \{e^{it} : t \in A_j\}$ and $j=0,1,2$.
If the derivation map $\Omega_0$ is trivial then there are weight functions $w_j$ such that
$X^j = X_0(w_j)$ with equivalence of norms.
In particular, $\Omega_z$ is trivial for every $z \in \mathbb{D}$.
\end{theorem}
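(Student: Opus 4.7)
The plan is to mirror the blueprint of Theorem \ref{stsplit} in the three-arcs setting, in three stages: (i) use triviality of $\Omega_0$ to show that it is a bounded perturbation of a multiplication operator $x\mapsto gx$ for some $g\in L_0(\mu)$; (ii) feed $g$ into Lemma \ref{preparatory} to build a competing strongly admissible family of weighted copies of $X_0$ whose derivation at $0$ is exactly $x\mapsto gx$; (iii) invoke the uniqueness Theorem \ref{thm-unicity} to match the two families arc by arc, and then read off the conclusion for every $z\in\mathbb{D}$ from Proposition \ref{thm-centralizer}.

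For step (i), I would start from Proposition \ref{trivial-deriv}, which provides a linear $L:X_0\to L_0(\mu)$ with $\Omega_0-L$ bounded from $X_0$ into $X_0$. Replace the family by the admissible copy $\{X_\omega(1/k)\}$ (with $k$ from the admissibility bound $\|x\|_\omega\leq\|xk\|_\infty$) so that characteristic functions lie in the new $X_0$, exactly as in Step 1 of Theorem \ref{stsplit}. Applying the averaging device of \cite[Lemmas 3.10 and 3.13]{cfg} to $L$ would yield a linear $\Lambda$ with $\Omega_0-\Lambda$ still bounded and $\Lambda(u\,\cdot)=u\Lambda(\cdot)$ for every unimodular $u\in L_\infty$. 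A simple-functions approximation then upgrades this to $\Lambda(ax)=a\Lambda(x)$ for every $a\in L_\infty$, which forces $\Lambda$ to act as multiplication by a fixed $g\in L_0(\mu)$ on the (dense) subspace of bounded elements of $X_0$, and then on the whole space by continuity. Unlike the pair case, here the derivation need not be real, so I would keep $g$ complex rather than extract a real part.

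For steps (ii)--(iii), I would apply Lemma \ref{preparatory} to the function $g$ to obtain positive real weights $w_0,w_1,w_2$ such that the strongly admissible family defined by $Y_\omega=X_0(w_j)$ on the arc $\{e^{it}:t\in A_j\}$ satisfies $Y_0=X_0$ (with equality of norms) and whose derivation at $0$ is $\Psi_0(x)=gx$. Then $\Omega_0-\Psi_0$ is bounded from $X_0$ into $X_0$, so Theorem \ref{thm-unicity} applies to the two families $\{X_\omega\}$ and $\{Y_\omega\}$ and yields $X^j=Y^j=X_0(w_j)$ up to equivalent renorming for $j=0,1,2$. The ``in particular'' part is then immediate: with the identification $X^j=X_0(w_j)$, Proposition \ref{thm-centralizer} gives for each $z\in\mathbb{D}$ the explicit formula $\Omega_z(x)=-\bigl(\sum_j\psi_j'(z)\log w_j\bigr)x$, which is a linear map, hence trivial by Proposition \ref{trivial-deriv} with $L=\Omega_z$.

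The main obstacle I anticipate is step (i): the averaging-plus-representation reduction of a general trivializing $L$ to a multiplication operator was carried out in \cite{cfg} and in Theorem \ref{stsplit} for \emph{couples}, and one has to check that each ingredient transfers verbatim to the three-arcs setting, in particular the preliminary renorming that places characteristic functions in $X_0$, the density of bounded elements (which invites an extra hypothesis of $\sigma$-order continuity, typically supplied by superreflexivity as in Theorem \ref{stsplit}), and the identification of the abstract multiplication by $g$ via duality on $L_1(hd\mu)$. Once step (i) is in place, the rest is a nearly mechanical chain of invocations of Lemma \ref{preparatory}, Theorem \ref{thm-unicity}, and Proposition \ref{thm-centralizer}.
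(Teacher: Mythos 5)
Your plan matches the paper's proof in steps (i) and (ii): the reduction of the trivializing linear map to a multiplication operator $\Lambda(x)=gx$ via the averaging trick from \cite{cfg} (with the preliminary passage to $X_\omega(1/k)$ and, as the paper also does, allowing $g$ to be complex since the derivation for a three-arc family need not be real), then the application of Lemma \ref{preparatory} to manufacture a competitor family $\{Y_\omega\}$ with $\Psi_0(x)=gx$, and finally Theorem \ref{thm-unicity} to conclude $X^j = X_0(w_j)$ up to equivalence of norms. You are also right to flag the superreflexivity/$\sigma$-order continuity issue inherited from Theorem \ref{stsplit}; the paper's proof uses the same ``argue as in Theorem \ref{stsplit}'' shortcut and implicitly carries the same hypotheses.

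The gap is in your step (iii). You assert that, after the identification $X^j=X_0(w_j)$, Proposition \ref{thm-centralizer} gives $\Omega_z(x)=-\bigl(\sum_j\psi_j'(z)\log w_j\bigr)x$. But Theorem \ref{thm-unicity} only identifies $X^j$ with $X_0(w_j)$ \emph{up to equivalent renorming}. Proposition \ref{thm-centralizer} computes the derivation for the family $\{Y_\omega\}$ whose arc-spaces carry the exact weighted norms $\|x\|_{X_0(w_j)}=\|w_j x\|_{X_0}$; the original family $\{X_\omega\}$ carries the (merely equivalent) original norms, so its Kalton space is a different, only equivalently normed, object, and its derivation $\Omega_z$ is \emph{not} equal to the explicit linear formula. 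What is true, and what the paper actually proves at this point, is that $\Omega_z$ and $\Lambda_z$ (the trivial derivation of $\{Y_\omega\}$ at $z$) differ by a bounded map: one compares $(1+\varepsilon)$-extremals $F_x\in\mathcal N^+(\{X_\omega\})$ and $G_x\in\mathcal N^+(\{Y_\omega\})$ for $x$ in the dense subspace $V$, uses the equivalence of the two Kalton-space norms and Lemma \ref{deltas-lemma} to estimate $\|\delta_z'(F_x-G_x)\|_{X_z}\leq C'\|x\|_{X_z}$, and then invokes Proposition \ref{trivial-deriv}. Your conclusion (triviality of $\Omega_z$) is correct, but the step as written replaces a ``bounded equivalence'' argument with an unjustified ``equality,'' and that is exactly where the extra work lives.
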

\begin{proof}
If $\Omega_0$ is trivial then we argument as in Theorem \ref{stsplit} to get (up to passing to weighted versions of the spaces) some $f\in L_0(\mu)$
so that the linear map $\Lambda(x)= fx$ on $X_0$ satisfies that $\Omega_0 - \Lambda$ takes $X_0$
into $X_0$ and it is bounded there.

We take $X = X_0$ in Lemma \ref{preparatory} so that we obtain a a new family
$\{Y_\omega : \omega \in \mathbb{T}\}$ which has $\Lambda$ as induced centralizer at $0$.
By Theorem \ref{thm-unicity}, we obtain that $X^j = X_0(w_j)$ with equivalence of norms for some
suitable weights.

To see that $\Omega_z$ is trivial for every $z \in \mathbb{D}$, observe that the Kalton spaces
associated to the families $\{X_\omega\}$ and $\{Y_\omega\}$ coincide, with equivalence of norms.
We will denote these Kalton spaces $\mathcal N^+(\{X_\omega\})$ and $\mathcal N^+(\{Y_\omega\})$.
In particular, $X_z = Y_z$ for all $z\in \mathbb D$, with equivalence of norms.

Let $\Lambda_z$ be the trivial centralizer induced by $\{Y_\omega\}$ at $z$.
Given $x \in V$, we fix $(1+\epsilon)$-extremals $F_x \in \mathcal{N}^+(\{X_\omega\})$ and
$G_x \in \mathcal{N}^+(\{Y_\omega\})$  so that $\Omega_z(x)=\delta_{z}'F_x$ and
$\Lambda_z(x)= \delta_{z}'G_x$.
Thus there are some constants $C, C'$ such that, for all $x \in V$, one has
\begin{eqnarray*}
\|\Omega_z(x) - \Lambda_z(x)\|_{X_z} & = & \|\delta_{z}'(F_x - G_x)\|_{X_z} \\
	& \leq & \|\delta_{z}': \ker \delta_{z} \rightarrow X_z\| \|F_x - G_x\|_{\mathcal{N^+} }\\
    & \leq & C (\|F_x\|_{\mathcal{N^+}(\{X_\omega\})} + \|G_x\|_{\mathcal{N^+}(\{Y_\omega\})}) \\
    & \leq & C (1+\epsilon) (\|x\|_{X_z} + \|x\|_{Y_z}) \\
    & \leq & C' \|x\|_{X_z}.
\end{eqnarray*}

Since $\Lambda_z$ is trivial and $V$ is dense in $X_z$, the derivation $\Omega_z$ is trivial.
\end{proof}

We should note that our reasoning does not work if we take general families of three spaces, i.\ e.\, not necessarily distributed in arcs. This should be compared with the results of \cite{yanqui}, previously obtained in \cite{cj}.

\subsection{(Bounded) stability fails for families of four K\"othe spaces}\label{four}


Here we show that the statement of Theorem \ref{kalthm} is no longer true for arbitrary families of K\"othe spaces.

A sequence $(\varphi_n)$ of functions which are continuous on $\overline{\mathbb{D}}$ and
analytic on $\mathbb{D}$ induces a family of diagonal linear maps $D(z): c_{00} \rightarrow c_{00}$
($z\in \overline{\mathbb{D}}$) given by $D(z)(x_n)= \left(\varphi_n(z)x_n\right)$.
We define a family of Banach spaces $\{X_s: s\in \T\}$ by taking $X_s$ the
completion of $c_{00}$ with respect to the norm $ \|x\|_s = \|e^{-D(s)}x\|_2$.
Moreover, for $x \in c_{00}$, we define $\|x\|_{\Sigma} = \inf\{\|x_1\|_{z_1}+\cdots+\|x_n\|_{z_n}\}$,
where the infimum is taken over all $n \in \mathbb{N}$, $z_i \in \T$, and
$x_i \in c_{00}$ such that $x = x_1 +\cdots+ x_n$.
\smallskip

\emph{We claim that $\|.\|_{\Sigma}$ is a norm on $c_{00}$.}
Indeed, the only difficulty is to show that $\|x\|_{\Sigma} = 0$ implies $x = 0$.
Let $x = (a^j) \in c_{00}$ with $a^k \neq 0$.
Note that $e^{-D(z)}$ is the multiplication operator associated to the sequence $(e^{-\varphi_n(z)})$.
If $\left|\varphi_k(z)\right| \leq M$,
then $\left|e^{-\varphi_k(z)}\right| = e^{-Re(\varphi_k(z))} \geq e^{-M}$.
Therefore
$$
\sum \|x_j\|_{z_j} = \sum \|e^{-D(z_j)}x_j\|_2 \geq
e^{-M}\left|a^k\right|,
$$
and we conclude that $\|x\|_{\Sigma} > 0$.
\smallskip

Let $\Sigma$ be the completion of $c_{00}$ with respect to $\|.\|_{\Sigma}$.
Then for each $\omega \in \T$ we have $X_\omega\subset \Sigma$ with inclusion having norm at most $1$.
Note also that the projection $P_n$ onto the first $n$ coordinates is a norm-one operator on
$X_\omega$ for each $\omega \in \T$, and also on $\Sigma$.

\begin{prop}\label{omegaoper}
The above defined family $(X_\omega)_{\omega\in \T}$ is an interpolation family with containing
space $\Sigma$ and intersection space $\Delta=c_{00}$.
Moreover, for every $z_0$ in $\mathbb{D}$ one has:
\begin{enumerate}
\item $X_{\{z_0\}} = X_{[z_0]}$. Thus we can denote $X_{z_0} = X_{\{z_0\}} = X_{[z_0]}$.
\item The space $X_{z_0}$ is the completion of $c_{00}$ with respect to the norm
$\|x\|_{z_0} = \|e^{-D(z_0)}x\|_2$.
\item $\Omega_{z_0}x = D'(z_0) x$ for $x\in c_{00}$.
\end{enumerate}
\end{prop}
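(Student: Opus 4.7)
The plan is to identify an explicit $1$-extremal for each $x\in c_{00}$ at each $z_0\in\mathbb{D}$, and then use the Hilbertian nature of the norms $\|\cdot\|_s = \|e^{-D(s)}\cdot\|_2$ to obtain a matching lower bound via a scalar auxiliary analytic function. For $x\in c_{00}$ set
\[
F_x(z) = e^{D(z)-D(z_0)}x,
\]
so that the $n$-th coordinate of $F_x(z)$ equals $e^{\varphi_n(z)-\varphi_n(z_0)}x_n$. Only finitely many coordinates are nonzero, and each factor $e^{\varphi_n(\cdot)-\varphi_n(z_0)}$ lies in the disk algebra, hence in $N^+$; thus $F_x\in \mathcal{G}_0$, $F_x(z_0)=x$, and
\[
\|F_x(\omega)\|_\omega = \|e^{-D(\omega)}e^{D(\omega)-D(z_0)}x\|_2 = \|e^{-D(z_0)}x\|_2
\]
is constant on $\T$, so $\|F_x\|_{\mathcal{G}} = \|e^{-D(z_0)}x\|_2$. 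This yields both $\|x\|_{\{z_0\}}\leq \|e^{-D(z_0)}x\|_2$ and, by differentiation, $\Omega_{z_0}x = F_x'(z_0) = D'(z_0)x$, which disposes of (3) once the norm identification is complete.

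I would next verify the interpolation-family axioms. Each inclusion $X_\omega\hookrightarrow \Sigma$ has norm at most one by the very definition of $\|\cdot\|_\Sigma$, and $c_{00}$ lies in every $X_\omega$. For $x\in c_{00}$ the map $\omega\mapsto \|x\|_\omega$ is continuous on $\T$ (only finitely many $\varphi_n$ matter and they are continuous on $\overline{\mathbb{D}}$), hence measurable and bounded, and $k\equiv 1$ handles the required domination of $\|\cdot\|_\Sigma$. For the reverse inequality $\|e^{-D(z_0)}x\|_2\leq \|x\|_{\{z_0\}}$, fix $g=\sum_{j=1}^m \psi_j x_j\in \mathcal{G}_0$ with $g(z_0)=x$ and let $N$ dominate the common support of the $x_j$. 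Then $g_n = \sum_j \psi_j (x_j)_n \in N^+$ for every $n\leq N$, and the pointwise estimate $|g_n(\omega)|\leq |e^{\varphi_n(\omega)}|\|g(\omega)\|_\omega$ (from positivity of each summand in $\|g(\omega)\|_\omega^2$) forces $g_n$ to be bounded on $\T$. Consider
\[
G(z) = \sum_{n=1}^N e^{-\varphi_n(z)}g_n(z)\cdot \overline{e^{-\varphi_n(z_0)}x_n},
\]
a finite linear combination of products of disk-algebra functions with constants, hence a bounded analytic function on $\mathbb{D}$. By Cauchy--Schwarz in $\ell_2$,
\[
|G(\omega)|\leq \|g(\omega)\|_\omega\cdot\|x\|_{z_0}\leq \|g\|\cdot\|x\|_{z_0}\qquad (\omega\in\T),
\]
while $G(z_0) = \|e^{-D(z_0)}x\|_2^2 = \|x\|_{z_0}^2$. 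The Poisson representation of $G$ gives $\|x\|_{z_0}^2\leq \|g\|\,\|x\|_{z_0}$, i.e. $\|x\|_{z_0}\leq \|g\|$, and taking the infimum over $g$ completes the proof of (2).

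Claim (1) follows by verifying that $\mathcal{G}$ is a Kalton space in the sense of Definition \ref{def:admissible} and invoking \cite[Proposition 1.5]{correa}: boundedness of evaluation at interior points reduces, via the Poisson integral applied coordinate-wise to elements of $\mathcal{G}_0$, to the scalar bounds already used above, and conformal invariance is automatic since $|\varphi|=1$ on $\T$ for any conformal self-map of $\mathbb{D}$. The main obstacle throughout is the lower bound in (2): once one spots that the $\ell_2$-pairing of $e^{-D(z)}g(z)$ against the fixed vector $e^{-D(z_0)}x$ defines a scalar analytic function whose value at $z_0$ is exactly $\|x\|_{z_0}^2$ and whose boundary values are majorized by $\|g\|\,\|x\|_{z_0}$, everything else is a routine verification.
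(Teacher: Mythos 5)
Your proof is correct and follows essentially the same approach as the paper: the same extremal $F_x(z)=e^{D(z)-D(z_0)}x$ yields the upper bound and the formula $\Omega_{z_0}x=D'(z_0)x$, and the lower bound is obtained by pairing $e^{-D(z)}g(z)$ against a fixed vector to produce a bounded analytic scalar function and applying the maximum modulus principle. Your only deviations are cosmetic: you test directly against $y=e^{-D(z_0)}x$ (so the bound drops out of $G(z_0)=\|e^{-D(z_0)}x\|_2^2$ in one step) rather than pairing against an arbitrary $y\in c_{00}$ and then supremizing, and for part (1) you invoke \cite[Proposition 1.5]{correa} where the paper gives a self-contained argument by truncating a competitor $f\in\mathcal G$ to $P_nf\in\mathcal G_0$, which avoids having to verify the Kalton-space axioms for $\mathcal G$ separately.
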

\begin{proof}
(1)  Let $x \in c_{00}$. Clearly $\|x\|_{[z_0]} \leq \|x\|_{\{z_0\}}$.
Let $f \in \mathcal{F}$ (see Section \ref{sect:admissible}) be such that $f(z_0) = x$.
Take $n$ such that $P_n(x)=x$ and define $g(z) = P_n(f(z))$.
Then $g(z) = \sum_{j=1}^n \psi_j(z) e_j$, where $(e_j)$ is the canonical basis of $\ell_2$.
Since $\psi_j(z) e_j = (P_j - P_{j-1}) f(z)$ and $f$ is analytic when viewed as a $\Sigma$-valued
function, we get that $\psi_j$ is analytic.
If $z \in \mathbb{D}$, then
$$
\left|\psi_j(z)\right| \|e_j\|_{\Sigma} = \|(P_j - P_{j-1})f(z)\|_{\Sigma} \leq 2\|f(z)\|_{\Sigma}
\leq 2\|f\|_\mathcal{F}.
$$
Hence $\psi_j \in H^{\infty}$, the space of bounded analytic functions on $\mathbb{D}$, which is
contained in the Smirnov class $N^+$.
Also, for almost every $z \in \T$ we have
$\|g(z)\|_{X_z} = \|P_n(f(z))\|_{X_z} \leq \|f(z)\|_{X_z} \leq \|f\|_\mathcal{F}$.
Thus $g \in \mathcal{G}$ and $\|g\|_\mathcal{G} \leq \|f\|_\mathcal{G}$.
Since $g(z_0) = P_n(f(z_0)) = x$, we get $\|x\|_{\{z_0\}} = \|x\|_{[z_0]}$.

To prove (2), let $x \in c_{00}$ and let $g(z) = e^{D(z) - D(z_0)}x \in \mathcal{G}$.
Then $g(z_0) = x$, and for $z \in \mathbb{T}$ we have $\|g(z)\|_{X_z} = \|e^{-D(z_0)}x\|_2$.
Thus $\|x\|_{z_0} \leq \|e^{-D(z_0)}x\|_2$.
Take $f \in \mathcal{G}$ such that $f(z_0) = x$.
Given a non-zero $y \in c_{00}$, define
$$
h(z) = \langle e^{-D(z)}f(z), y\rangle.
$$
It follows that $h \in H^{\infty}$.
Indeed, $f$ may be written as a finite sum $\sum f_i x_i$, with $f_i \in N^+$ and $x_i \in c_{00}$.
Since $e^{-D(z)}$ is bounded, we have that $e^{-D(z)} f_i(z) \in N^+$.
This implies that $h \in N^+$, and since it is bounded on $\mathbb{T}$, $h \in H^{\infty}$
\cite[Theorem 2.11]{duren}. Moreover, $\|h\|_{H^{\infty}} \leq \|f\|_\mathcal{G}\|y\|_2$.
Hence $\left|h(z_0)\right| = \left|\langle e^{-D(z_0)}x, y\rangle \right| \leq \|f\|_\mathcal{G}\|y\|_2$.
Since $\|f\|_\mathcal{G}$ can be taken arbitrarily close to $\|x\|_{z_0}$ and $y$ is arbitrary,
$\|e^{-D(z_0)}x\|_2 \leq \|x\|_{z_0}$.

(3) We have shown that the following function $g$ is an extremal function for $x=(a^j)$ at $z_0$:
$$
g(z) = e^{D(z)-D(z_0)}x = (e^{\varphi_1(z) -\varphi_1(z_0)}a^1, e^{\varphi_2(z) -\varphi_2(z_0)}a^2,\ldots).
$$
Then $g'(z) = \left(\varphi'_n(z) e^{\varphi_n(z) - \varphi_n(z_0)}a^n\right)$, hence
$\Omega_{z_0}x = g'(z_0) = \left(\varphi'_n(z_0)a^n\right) = D'(z_0)x.$
\end{proof}

By Proposition \ref{omegaoper}, there is no local bounded stability for arbitrary
families of K\"othe spaces:

\begin{prop}\label{cbounded}
Let  $D(x_n) = (w_n x_n)$ be an unbounded diagonal operator on $c_{00}$.
\begin{enumerate}
\item The choice $D(z) = zD$ yields an analytic family for which $\Omega_z = D$ for every $z\in\mathbb{D}$.
\item The choice $D(z) =z^2D$ yields an analytic family such that $\Omega_z = 2zD$ for every $z\in\mathbb{D}$.
Therefore $\Omega_0 = 0$ while $\Omega_z$ is unbounded for $z \neq 0$.
\end{enumerate}
\end{prop}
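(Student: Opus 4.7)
The plan is to apply Proposition~\ref{omegaoper} directly: once we verify that the choices $D(z) = zD$ and $D(z) = z^2D$ fit the framework preceding that proposition, statement (3) of it gives $\Omega_z x = D'(z) x$ for $x \in c_{00}$, and the rest is a two-line computation.

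First, I would note that both choices correspond to sequences $(\varphi_n)$ of coordinate functions consisting of polynomials: for (1) we have $\varphi_n(z) = z w_n$, and for (2) we have $\varphi_n(z) = z^2 w_n$. Both sequences are continuous on $\overline{\mathbb{D}}$ and analytic on $\mathbb{D}$, so the construction preceding Proposition~\ref{omegaoper} applies. This yields, in each case, an interpolation family $(X_\omega)_{\omega \in \T}$ with containing space $\Sigma$ and intersection space $c_{00}$, and the associated analytic family $(X_z)_{z \in \mathbb D}$ whose norm on $c_{00}$ is $\|x\|_z = \|e^{-D(z)}x\|_2$, with derivation formula $\Omega_z x = D'(z) x$ for $x \in c_{00}$.

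Now for (1), $D(z) = zD$ gives $D'(z) = D$, so $\Omega_z = D$ on $c_{00}$ for every $z \in \mathbb{D}$; this is a linear map independent of $z$, as claimed. For (2), $D(z) = z^2 D$ gives $D'(z) = 2zD$, so $\Omega_z x = 2zD x$. In particular, $\Omega_0 = 0$, which is trivially bounded. To see that $\Omega_z = 2zD$ is unbounded on $X_z$ for every $z \neq 0$, I would test against the standard basis vectors $e_n$: one has $\|e_n\|_z = e^{-\operatorname{Re}(z^2 w_n)}$ and $\|D e_n\|_z = |w_n| e^{-\operatorname{Re}(z^2 w_n)}$, so $\|2zD e_n\|_z / \|e_n\|_z = 2|z|\,|w_n|$, which is unbounded in $n$ since $(w_n)$ is unbounded by hypothesis.

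There is no genuine obstacle here; the proposition is essentially a corollary of Proposition~\ref{omegaoper}, with all the analytical content (extremals, the identification of $X_{\{z\}}$ with $X_{[z]}$, and the formula for $\Omega_{z_0}$ on $c_{00}$) already packaged there. The only small point worth spelling out is the final unboundedness argument, which reduces to the observation that for any fixed $z \neq 0$ the multiplier $2zw_n$ is unbounded in $n$, independently of how the exponential weight $e^{-\operatorname{Re}(z^2 w_n)}$ behaves, since it cancels between numerator and denominator.
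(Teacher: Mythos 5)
Your proposal is correct and follows exactly the route the paper intends: Proposition~\ref{cbounded} is stated as an immediate consequence of Proposition~\ref{omegaoper}(3), and your computation $D'(z)=D$ resp.\ $D'(z)=2zD$ together with the test on the basis vectors $e_n$ (where the exponential weight cancels in the ratio $\|2zDe_n\|_z/\|e_n\|_z=2|z|\,|w_n|$) is precisely the two-line verification the paper leaves implicit. Nothing is missing.
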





We pass now to show that there is no local stability for families of K\"othe spaces.

\begin{prop}
Let $p: \T \to [1,\infty)$ be a measurable function, let $\alpha: \overline{\mathbb D} \to \C$ be a
function analytic on $\mathbb{D}$ and satisfying $Re \left(\alpha(z)^{-1}\right) = p(z)^{-1}$ on $\T$.
We consider the interpolation family $(\ell_{p(\omega)})_{\omega \in \T}$.
Given $z_0\in\mathbb D$ with $\alpha(z_0) \in \mathbb{R}$, the interpolation space at $z_0$ is
$\ell_{p(z_0)}$ with derivation
$$
\Omega_{z_0}\left((x_n)\right) = -\frac{\alpha'(z_0)}{\alpha(z_0)}
\left(x_n \log \frac{\left|x_n\right|}{\|x\|_{\ell_{p(z_0)}}}\right).
$$
\end{prop}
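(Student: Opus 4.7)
\bigskip

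\textbf{Proof plan.}

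The strategy is to identify the interpolated space at $z_0$ via reiteration, and then to produce an \emph{exact} extremal function at $z_0$ whose derivative at $z_0$ gives the claimed formula for $\Omega_{z_0}$.

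\emph{Step 1 (identification of $X_{z_0}$).} Since $\text{Re}(\alpha(\omega)^{-1})=p(\omega)^{-1}\in(0,1]$ on $\T$ and $1/\alpha$ is analytic on $\mathbb D$, the minimum principle for harmonic functions gives $\text{Re}(1/\alpha)>0$ on $\overline{\mathbb D}$, so $\alpha$ is nonvanishing on $\overline{\mathbb D}$. The Poisson formula yields
\[
\text{Re}\bigl(\alpha(z_0)^{-1}\bigr)=\int_\T P_{z_0}(\omega)\,p(\omega)^{-1}\,d\omega.
\]
Applying Theorem \ref{interpcoupfam} to the pair $(\ell_\infty,\ell_1)$ with $\beta(\omega)=1/p(\omega)$ and using the classical identification $(\ell_\infty,\ell_1)_\theta=\ell_{1/\theta}$, we obtain $X_{[z]}=\ell_{p(z)}$ isometrically, where $1/p(z):=\int_\T P_z(\omega)\,p(\omega)^{-1}\,d\omega$. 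Since $\alpha(z_0)\in\R$ by hypothesis, the Poisson identity above forces $\alpha(z_0)=p(z_0)$.

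\emph{Step 2 (construction of the extremal).} For $x=(x_n)\in c_{00}$, put $\|x\|:=\|x\|_{\ell_{p(z_0)}}$ and define
\[
F(z)_n=\operatorname{sgn}(x_n)\,\|x\|\left(\frac{|x_n|}{\|x\|}\right)^{\alpha(z_0)/\alpha(z)},\qquad z\in\overline{\mathbb D},
\]
interpreted entry-wise via $t^w=\exp(w\log t)$ for $t>0$. Only finitely many coordinates are nonzero, and on each such coordinate $z\mapsto\alpha(z_0)/\alpha(z)$ is analytic on $\mathbb D$ and bounded on $\overline{\mathbb D}$; hence each $F(z)_n$ lies in $H^\infty\subset N^+$, and $F\in\mathcal G$. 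Clearly $F(z_0)=x$. For a.e.\ $\omega\in\T$,
\[
\|F(\omega)\|_{\ell_{p(\omega)}}^{p(\omega)}
=\sum_n \|x\|^{p(\omega)}\left(\frac{|x_n|}{\|x\|}\right)^{p(\omega)\,\text{Re}(\alpha(z_0)/\alpha(\omega))}.
\]
Using $\alpha(z_0)\in\R$ and $\text{Re}(1/\alpha(\omega))=1/p(\omega)$, one has $p(\omega)\,\text{Re}(\alpha(z_0)/\alpha(\omega))=\alpha(z_0)=p(z_0)$, so the sum equals $\|x\|^{p(\omega)}$. Therefore $F$ is an exact extremal for $x$ at $z_0$ with $\|F\|_{\mathcal G}=\|x\|$.

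\emph{Step 3 (computation of $\Omega_{z_0}$).} Logarithmic differentiation of $F(z)_n=\operatorname{sgn}(x_n)\|x\|\exp\bigl((\alpha(z_0)/\alpha(z))\log(|x_n|/\|x\|)\bigr)$ gives
\[
F'(z_0)_n=x_n\cdot\frac{d}{dz}\Bigl(\frac{\alpha(z_0)}{\alpha(z)}\Bigr)\bigg|_{z=z_0}\log\frac{|x_n|}{\|x\|}
=-\frac{\alpha'(z_0)}{\alpha(z_0)}\,x_n\log\frac{|x_n|}{\|x\|_{p(z_0)}}.
\]
Since $F$ is an extremal, $\Omega_{z_0}(x)=F'(z_0)$ gives the stated formula on $c_{00}$. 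Homogeneity and quasi-linearity (bounded up to bounded perturbations) then extend the formula to all of $\ell_{p(z_0)}$, with the understanding that $x_n\log(|x_n|/\|x\|)$ is set to $0$ when $x_n=0$.

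\emph{Main obstacle.} The only delicate point is verifying that $F$ really is an element of the Kalton space $\mathcal G$ and is an exact extremal; this hinges precisely on the identity $p(\omega)\,\text{Re}(\alpha(z_0)/\alpha(\omega))=p(z_0)$, which in turn relies on the reality of $\alpha(z_0)$ and the harmonic boundary condition $\text{Re}(1/\alpha)=1/p$ on $\T$. Everything else is a calculus exercise, and the final extension from $c_{00}$ to $\ell_{p(z_0)}$ is standard in the Rochberg--Kalton framework.
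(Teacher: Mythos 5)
Your proof is correct and follows essentially the same approach as the paper: you use the same exact extremal $F(z)_n=\operatorname{sgn}(x_n)\,\|x\|\,(|x_n|/\|x\|)^{\alpha(z_0)/\alpha(z)}$, verify via the boundary condition $\operatorname{Re}(1/\alpha)=1/p$ and the reality of $\alpha(z_0)$ that $\|F(\omega)\|_{\ell_{p(\omega)}}=\|x\|_{p(z_0)}$ a.e.\ on $\T$, and differentiate to obtain the stated formula. The only cosmetic difference is that you identify $X_{z_0}=\ell_{p(z_0)}$ by invoking the reiteration theorem for families applied to $(\ell_\infty,\ell_1)$ with $\beta=1/p$, whereas the paper obtains the inequality $\|x\|_{z_0}\leq\|x\|_{p(z_0)}$ directly from the extremal and cites a standard argument for the reverse; and you treat general $x$ directly rather than first normalizing $\|x\|_{p(z_0)}=1$ and invoking homogeneity as the paper does. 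Both variants are equivalent.
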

\begin{proof}
The containing space for the family is $\ell_{\infty}$, and the intersection space may be taken as $c_{00}$.
Notice that even if we take the biggest intersection space possible, we still would have that $c_{00}$ is
dense in the interpolation space. We first check that
$$
f(z) = \left(\left|x_n\right|^{\frac{\alpha(z_0)}{\alpha(z)}} \frac{x_n}{\left|x_n\right|}\right)
$$
is a $1$-extremal for $x=(x_n) \in c_{00}$ with $\|x\|_{p(z_0)}=1$.
The function $f$ is analytic, $f(z_0) = x$, and $f \in \mathcal{G}$ because for every $z \in \mathbb{D}$
$$
\|f(z)\|_{p(z)}^{p(z)}  =  \sum \left|x_n\right|^{Re(\frac{\alpha(z_0)}{\alpha(z)}) p(z)}
 = \sum \left|x_n\right|^{\alpha(z_0)} = \sum \left|x_n\right|^{p(z_0)} = 1.
$$
Therefore, $\|x\|_{z_0} \leq \|x\|_{p(z_0)}$.
The reverse inclusion is proved by a standard argument (see \cite[5.5.1. Theorem]{BL}).
Moreover for non-zero $x=(x_n)\in \ell_{p(z_0)}$ one has
\begin{eqnarray*}
\Omega_{z_0}(x) & = & \|x\|_{\ell_{p(z_0)}}\Omega_{z_0}\left(\frac{x}{\|x\|_{\ell_{p(z_0)}}}\right) \\
& = & \|x\|_{\ell_{p(z_0)}} \left(- \frac{x_n}{\left|x_n\right|}
\left(\frac{\left|x_n\right|}{\|x\|_{\ell_{p(z_0)}}}\right)^{\frac{\alpha(z_0)}{\alpha(z_0)}}
\frac{\alpha(z_0)}{\alpha(z_0)^2} \alpha'(z_0) \log\frac{\left|x_n\right|}{\|x\|_{\ell_{p(z_0)}}}\right)\\
& = & - \frac{\alpha'(z_0)}{\alpha(z_0)} \left(x_n \log \frac{\left|x_n\right|}{\|x\|_{\ell_{p(z_0)}}}\right),
\end{eqnarray*}
and the proof is complete.
\end{proof}

An exact sequence is \emph{singular} when the quotient map $q$ is strictly singular; i.\ e.\, no restriction
of $q$ to an infinite dimensional subspace is an isomorphism.
A derivation is said to be \emph{singular} if the induced exact sequence is singular \cite{ccs}.
Obviously, singular derivations are not trivial.

\begin{prop}\label{singfam}
The family $(\ell_{p(z)})_{z \in \mathbb{T}}$  with $p(z)^{-1} = Re\left((z^2 + 2)^{-1}\right)$
yields $\Omega_0 = 0$ and $\Omega_z$ singular for $0\neq z \in \mathbb{D}$.
\end{prop}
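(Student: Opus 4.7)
The plan is to invoke the preceding proposition with the analytic function $\alpha(z)=z^2+2$, which is analytic on $\overline{\D}$ and satisfies $\mathrm{Re}(\alpha(z)^{-1})=p(z)^{-1}$ on $\T$ by the very definition of $p$. At $z_0=0$ one has $\alpha(0)=2\in\R$ and $\alpha'(0)=0$, so the coefficient $-\alpha'(z_0)/\alpha(z_0)$ in the formula for $\Omega_{z_0}$ vanishes and we obtain $\Omega_0=0$ directly.

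For $z_0\neq 0$ with $\alpha(z_0)\in\R$ (e.g.\ $z_0\in\R\setminus\{0\}$, for which the hypothesis of the previous proposition is met), the same formula yields
\[
\Omega_{z_0}(x)=-\frac{\alpha'(z_0)}{\alpha(z_0)}\left(x_n\log\frac{|x_n|}{\|x\|_{\ell_{p(z_0)}}}\right),
\]
which is a nonzero scalar multiple of the classical Kalton--Peck map $K_{p(z_0)}$ on $\ell_{p(z_0)}$. The Kalton--Peck theorem asserts that $K_p$ induces a singular exact sequence on $\ell_p$ for every $1<p<\infty$; since $1<p(z_0)<\infty$ in the range considered, the derivation $\Omega_{z_0}$ is singular.

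For the remaining $0\neq z_0\in\D$ with $\alpha(z_0)\notin\R$, I would use the trial function $f(\zeta)=(|x_n|^{\alpha(z_0)/\alpha(\zeta)}\,x_n/|x_n|)$, which still satisfies $f(z_0)=x$ and whose boundary norms $\|f(\zeta)\|_{\ell_{p(\zeta)}}$ remain within a uniform multiplicative constant of $\|x\|_{\ell_{p^{*}(z_0)}}$, where $1/p^{*}(z_0)=\mathrm{Re}(\alpha(z_0)^{-1})$; this is because on $\T$ the quantity $\mathrm{Re}(\alpha(z_0)/\alpha(\zeta))\,p(\zeta)-\alpha(z_0)$ is uniformly bounded. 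Differentiating at $\zeta=z_0$ and absorbing the resulting bounded discrepancies one obtains $\Omega_{z_0}=-(\alpha'(z_0)/\alpha(z_0))\,K_{p^{*}(z_0)}+B$ with $B$ bounded and coefficient nonzero, so singularity of $\Omega_{z_0}$ transfers from that of $K_{p^{*}(z_0)}$ (a property stable under bounded perturbations).

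The main obstacle is this final extension: verifying cleanly that the discrepancy arising from the complex power $|x_n|^{\alpha(z_0)/\alpha(\zeta)}$, as opposed to the real power treated in the preceding proposition, contributes only a bounded centralizer. Once this is in place, the entire result reduces to the Kalton--Peck singularity theorem applied to $K_p$ on $\ell_p$.
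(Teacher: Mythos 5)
Your treatment of $z_0=0$ and of $z_0$ on the real or imaginary axes is correct and is exactly the paper's first step. The gap is where you flag it, but it is not a loose end that "once in place" closes the argument: the claim that $\|f(\zeta)\|_{\ell_{p(\zeta)}}$ stays within a uniform multiplicative constant of $\|x\|_{\ell_{p(z_0)}}$ is simply false when $\alpha(z_0)\notin\R$, so the derivative of your trial function does not compute $\Omega_{z_0}$ even up to a bounded perturbation.

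Concretely, write $\alpha(z_0)=a+ib$ with $b\neq0$. On $\T$,
$$
\mathrm{Re}\!\left(\frac{\alpha(z_0)}{\alpha(\omega)}\right)p(\omega)
= a - b\,p(\omega)\,\mathrm{Im}\!\left(\frac{1}{\alpha(\omega)}\right),
$$
and this exponent is not identically $p(z_0)=|\alpha(z_0)|^2/a$ (note $p(z_0)\neq a$ when $b\neq0$). A bounded \emph{additive} perturbation of the $\ell_p$-exponent is not a \emph{multiplicative} perturbation of the norm: taking $x=N^{-1/p(z_0)}\sum_{n\le N}e_n$, so $\|x\|_{p(z_0)}=1$, one has $\sum_n|x_n|^{r}=N^{1-r/p(z_0)}$, which is unbounded in $N$ whenever $r<p(z_0)$. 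Hence $\esssup_\omega\|f(\omega)\|_{p(\omega)}$ is infinite relative to $\|x\|_{p(z_0)}$, your $f$ is not a $C$-extremal for any $C$, and the "bounded discrepancy $B$" in your last display does not exist.

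The fix is to modify the analytic exponent, not to try to absorb the error afterwards. Set $c=\mathrm{Im}\!\left(p(z_0)/\alpha(z_0)\right)$ and $\beta(\zeta)=p(z_0)/\alpha(\zeta)-ic$. Then $\beta(z_0)=\mathrm{Re}\!\left(p(z_0)/\alpha(z_0)\right)=p(z_0)\,\mathrm{Re}(1/\alpha(z_0))=1$, while on $\T$ one has $\mathrm{Re}(\beta(\omega))\,p(\omega)=p(z_0)\,\mathrm{Re}(1/\alpha(\omega))\,p(\omega)=p(z_0)$. Therefore $\tilde f(\zeta)=\big(|x_n|^{\beta(\zeta)}\,x_n/|x_n|\big)$ satisfies $\tilde f(z_0)=x$ and $\|\tilde f(\omega)\|_{p(\omega)}=\|x\|_{p(z_0)}$ a.e.\ on $\T$, so it is a genuine $1$-extremal, and differentiating gives
$$
\Omega_{z_0}(x)=\tilde f\,'(z_0)=-\frac{p(z_0)\,\alpha'(z_0)}{\alpha(z_0)^2}\Big(x_n\log\frac{|x_n|}{\|x\|_{p(z_0)}}\Big).
$$
(This reduces to the formula in the preceding proposition when $\alpha(z_0)\in\R$, since then $p(z_0)=\alpha(z_0)$; note that the coefficient you wrote, $-\alpha'(z_0)/\alpha(z_0)$, is only correct in that special case.) Since $\alpha'(z)=2z$ and $\alpha$ has no zeros in $\overline{\D}$, the coefficient is nonzero for $z_0\neq0$, so $\Omega_{z_0}$ is a nonzero multiple of the Kalton--Peck map on $\ell_{p(z_0)}$ and hence singular. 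Equivalently, one may bypass the trial-function computation entirely by invoking the reiteration Theorem \ref{interpcoupfam2} applied to the couple $(\ell_1,\ell_\infty)$ with $w(z)=1-\alpha(z)^{-1}$: this gives $\Phi_{z_0}=w'(z_0)\,\Omega_{\mathrm{Re}(w(z_0))}$ with $w'(z_0)=\alpha'(z_0)/\alpha(z_0)^2\neq0$, which is the mechanism the paper intends when it replaces $\alpha$ by $\alpha_{z_0}(z)=z^2+2-i\,\mathrm{Im}(\alpha(z_0))$.
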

\begin{proof}
Since $p(z)^{-1} \in [1/3, 1]$
it turns out that $p(z) \in [1, 3]$. We thus set $\alpha(z) = z^2 + 2$ on $\mathbb{D}$.
In that case we get $\alpha(z) \in \mathbb{R}$ if and only if $z = t$ or $z= it$, $t \in \mathbb{R}$.
By the previous lemma,  $\Omega_0 = 0$, and for $z = t$ and $z = it$, $t \neq 0$, $\Omega_{z}$ is a
nonzero multiple of the Kalton-Peck map on $\ell_{p(z_0)}$, and therefore it is singular.
Moreover, the choice $\alpha_{z_0}(z) = z^2 + 2 - i \mathrm{Im}(\alpha(z_0))$ yields that $\Omega_{z_0}$
is a nonzero multiple of the Kalton-Peck map for any $z_0 \in \mathbb{D}$, $z_0 \neq 0$.
\end{proof}

\noindent\textbf{The moral of all this.}
We can present two explanations for the fact that families of two or three K\"othe spaces have global
(bounded) stability and are even rigid in different senses while families of four spaces do not.
The first one emerges from the proof of Theorem \ref{thm-unicity}: any point in the interior of the
convex hull of two or three points admits a unique representation as a convex combination of them,
which is false for four points.
The second one arises from the reiteration theorem for families \cite{Coifman1982}.
Using that result to set the initial configuration one gets:

\begin{theorem} \label{interpcoupfam2}
Let $\alpha$ and $(X_0, X_1)_{\alpha(\omega)}$ for $\omega\in \T$ be as in Theorem \ref{interpcoupfam},
and let $\Omega_{s}$ denote the derivation corresponding to $(X_0, X_1)_s$ for $0<s<1$.
Then the derivation corresponding to the family $(X_0, X_1)_{\alpha(z)}$ at $z\in \mathbb{D}$ is
$\Phi_{z} = w'(z) \Omega_{\alpha(z)}$, where $w = \alpha + i\tilde{\alpha}$ and $\tilde{\alpha}$ is
the harmonic conjugate of $\alpha$ with $\tilde{\alpha}(z) = 0$.
\end{theorem}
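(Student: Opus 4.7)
The plan is to construct a near-extremal for $x$ at $z$ in the Kalton space $\mathcal G$ of the family $\{(X_0,X_1)_{\alpha(\omega)}\}_{\omega\in\T}$ by composing a near-extremal for $x$ at $s:=\alpha(z)$ in the Calder\'on space $\mathscr C$ of the pair $(X_0,X_1)$ with the analytic map $w=\alpha+i\tilde\alpha:\mathbb D\to\overline{\mathbb S}$, and then to read off the derivation via the chain rule.

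First I would verify that $w$ is a well-defined analytic map from $\mathbb D$ to $\overline{\mathbb S}$: since $\alpha:\T\to[0,1]$, its harmonic extension satisfies $0\le\alpha(\zeta)\le 1$ on $\mathbb D$ by the maximum principle, hence $\mathrm{Re}(w)\in[0,1]$ on $\overline{\mathbb D}$. With the normalization $\tilde\alpha(z)=0$ we have $w(z)=s\in(0,1)$ (the degenerate cases $s\in\{0,1\}$ reduce directly to matching boundary norms). By Theorem~\ref{interpcoupfam}, $X_{[z]}=(X_0,X_1)_s$ isometrically, so given $x\in X_{[z]}$ and $\varepsilon>0$ we can fix a $(1+\varepsilon)$-extremal $g\in\mathscr C$ for $x$ at $s$: $g(s)=x$, $\|g\|_{\mathscr C}\le(1+\varepsilon)\|x\|_{[z]}$, and $g'(s)=\Omega_s x$ modulo a bounded perturbation.

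Next, set $f:=g\circ w$, which is analytic on $\mathbb D$ and satisfies $f(z)=g(s)=x$. For each $\omega\in\T$, the vertical translate $g_{\tilde\alpha(\omega)}(\cdot):=g(\cdot+i\tilde\alpha(\omega))$ lies in $\mathscr C$ with the same norm as $g$, so
\[
\|f(\omega)\|_{(X_0,X_1)_{\alpha(\omega)}}=\|g_{\tilde\alpha(\omega)}(\alpha(\omega))\|_{(X_0,X_1)_{\alpha(\omega)}}\le\|g_{\tilde\alpha(\omega)}\|_{\mathscr C}=\|g\|_{\mathscr C}.
\]
Hence $f\in\mathcal G$ with $\|f\|_{\mathcal G}\le(1+\varepsilon)\|x\|_{[z]}$, so $f$ is a near-extremal for $x$ at $z$ in the family's Kalton space. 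By the chain rule,
\[
\Phi_z x=f'(z)=w'(z)\,g'(w(z))=w'(z)\,g'(s)=w'(z)\,\Omega_{\alpha(z)}x,
\]
which is the desired identity, understood modulo bounded perturbations since both derivation maps are defined only up to such perturbations.

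The main technical obstacle is the norm estimate above: one must show that the composition $g\circ w$, a priori only an $X_0+X_1$-valued bounded analytic function on $\mathbb D$, has boundary values controlled in the correct interpolation norms of the family. This rests on translation invariance of the Calder\'on space in the imaginary direction, together with the fact that evaluation at any interior point $\sigma+it$ of the strip is a norm-one map from $\mathscr C$ into $(X_0,X_1)_\sigma$. A minor remaining point is to pass from the $(1+\varepsilon)$-extremals to the actual derivation by choosing homogeneous selections and letting $\varepsilon\to 0$, which only perturbs the resulting map by uniformly bounded terms and is harmless for the conclusion.
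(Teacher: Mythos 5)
Your proposal follows essentially the same route as the paper's proof: fix a near-extremal $g$ for $x$ at $s=\alpha(z)$ in the Calder\'on space of the pair, compose with the analytic map $w$, show that $g\circ w$ is a near-extremal for $x$ at $z$ in the family's Kalton space, and conclude by the chain rule. The norm estimate you derive via vertical translation invariance of $\mathscr C$ is precisely the content the paper instead cites from the proof of \cite[Theorem 5.1]{Coifman1982}, so you are fleshing out the step the paper delegates to a reference.

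One point in your sketch is left open: bounded boundary norms alone do not place $f=g\circ w$ in the family's Kalton space $\mathcal G$, which is the \emph{completion} of the subspace $\mathcal G_0$ of finite linear combinations $\sum_j\psi_j x_j$ with $\psi_j\in N^+$ and $x_j\in\Delta$. You conclude ``hence $f\in\mathcal G$'' from the boundary estimate, but a generic Calder\'on-space extremal $g$ need not produce a composition $g\circ w$ that is approximable by elements of $\mathcal G_0$. The paper closes this gap by first invoking \cite[4.2.3. Lemma]{BL} to replace $g$ by a finite linear combination of scalar analytic functions times vectors in $X_0\cap X_1$; after that reduction, $g\circ w$ is manifestly of the form $\sum_j(\psi_j\circ w)x_j$ with $\psi_j\circ w$ bounded analytic on $\mathbb D$, hence in $H^\infty\subset N^+$, so $g\circ w\in\mathcal G_0$ and the norm estimate applies. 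You also want to work with $x\in X_0\cap X_1$ throughout (as the paper does) before passing to general $x$ by density and bounded perturbation. With those two additions your argument coincides with the paper's.
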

\begin{proof}
Fix $z\in \mathbb D$ and $x \in X_0 \cap X_1$, and take a $c$-extremal $f$ for $x$ at $\alpha(z)$ in
the Calderon space $\mathscr{C}(X_0, X_1)$.
By \cite[4.2.3. Lemma]{BL} we may assume that $f$ is a linear combination of functions
with values in $X_0 \cap X_1$.
Included in the proof of \cite[Theorem 5.1]{Coifman1982} is the fact that $g = f \circ w$ is an
extremal for $x$ at $z$ with respect to the family $(X_0, X_1)_{\alpha(z)}$, and $\|g\| \leq \|f\|$.
Therefore $\Phi_{z}(x) = (f\circ w)'(z) = w'(z) \Omega_{w(z)}(x)$.
Finally $\Omega_{w(z)}$ may be chosen as $\Omega_{\alpha(z)}$ by vertical symmetry.
\end{proof}

This result can be seen as the $1$-level version of the reiteration Theorem \ref{interpcoupfam}.
It shows that, under the hypothesis of Theorem \ref{interpcoupfam}, the derivation map of the family
is always a multiple of the derivation map of the initial pair.


\begin{corollary}\label{familyofKP}
Let $(X_0, X_1)$ be an interpolation pair, let $\Omega_\theta$ be the derivation at $\theta \in (0,1)$,
let $B_0 = \{e^{i\theta} :\theta\in [0,\frac{\pi}{2}] \cup [\pi,\frac{3\pi}{2}]\}$ and let
$\alpha =\chi_{B_0}:\T\to [0,1]$.
Consider the interpolation family $\{(X_0, X_1)_{\alpha(w)} : w \in \T\}$.
Then for $z = t$ or $z = it$, $t \in (-1, 1)$, we get $X_z = (X_0, X_1)_{\frac{1}{2}}$ with
derivations $\Phi_0=0$ and $\Phi_z$ equal to a multiple of $\Omega_\theta$ with $\theta=\alpha(z)$
for $0\neq z \in \mathbb{D}$.
\end{corollary}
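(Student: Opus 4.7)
The plan is to reduce everything to a direct application of Theorem~\ref{interpcoupfam} (to identify the interpolation spaces) and Theorem~\ref{interpcoupfam2} (to identify the derivations), so the only real computations left are (i) to locate where $\alpha(z) = 1/2$ and (ii) to compute $w'(0)$ for $w=\alpha+i\tilde\alpha$.

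First I would use the symmetries of $B_0 = [0,\pi/2]\cup[\pi,3\pi/2]$ to prove that $\alpha \equiv 1/2$ on the real and imaginary diameters of $\mathbb{D}$. Under complex conjugation $\zeta\mapsto\bar\zeta$, the set $B_0$ maps to $[\pi/2,\pi]\cup[3\pi/2,2\pi] = \T\setminus B_0$, while $P_{\bar z}(\zeta) = P_z(\bar\zeta)$; hence for $t\in(-1,1)$ real, $\alpha(t) = \alpha(\bar t) = 1-\alpha(t)$, giving $\alpha(t) = 1/2$. Similarly, rotation by $\pi/2$ (the map $\zeta\mapsto i\zeta$) again carries $B_0$ onto its complement, and using $P_{iz}(e^{i\theta}) = P_z(e^{i(\theta-\pi/2)})$ one gets $\alpha(iz) = 1-\alpha(z)$, which combined with the previous identity yields $\alpha(it) = 1/2$. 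Since $\alpha$ attains both its minimum $0$ and maximum $1$ on sets of positive measure, Theorem~\ref{interpcoupfam} then gives $X_z = X_{\{z\}} = X_{[z]} = (X_0,X_1)_{1/2}$ for every such $z$.

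Next, Theorem~\ref{interpcoupfam2} applied to this situation says that $\Phi_z = w'(z)\,\Omega_{\alpha(z)}$, where $w = \alpha + i\tilde\alpha$ and $\tilde\alpha(0)=0$. For $z$ on the real or imaginary axis this reads $\Phi_z = w'(z)\,\Omega_{1/2}$, which is precisely the asserted ``multiple of $\Omega_\theta$'' with $\theta = \alpha(z) = 1/2$. It remains to identify $\Phi_0$.

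The last step is to show $w'(0)=0$. Expanding $\chi_{B_0}$ in its Fourier series $\sum_{n\in\Z} c_n e^{in\theta}$ and using that the analytic extension with $\tilde\alpha(0)=0$ takes the form
\[
w(z) \;=\; c_0 + 2\sum_{n\geq 1} c_n z^n,
\]
one has $w'(0) = 2c_1$ with
\[
c_1 \;=\; \frac{1}{2\pi}\!\int_{B_0} e^{-i\theta}\,d\theta \;=\; \frac{1}{2\pi}\Bigl(\!\int_0^{\pi/2} + \!\int_\pi^{3\pi/2}\!\Bigr)e^{-i\theta}\,d\theta \;=\; \frac{1}{2\pi}\bigl((1-i) + (-1+i)\bigr) \;=\; 0.
\]
Thus $\Phi_0 = w'(0)\,\Omega_{1/2} = 0$, completing the proof. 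The only mild obstacle is keeping the symmetry computation for $\alpha$ clean; the power-series identification of $w$ and the computation of $c_1$ are routine.
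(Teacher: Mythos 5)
Your proof is correct and follows the route the paper clearly intends (the paper itself gives no proof for this corollary, presenting it as a direct application of Theorems~\ref{interpcoupfam} and \ref{interpcoupfam2}). The conjugation and quarter-turn symmetries of $B_0$ give $\alpha(\bar z)=1-\alpha(z)$ and $\alpha(iz)=1-\alpha(z)$, forcing $\alpha\equiv\tfrac12$ on both axes; the reiteration theorem identifies $X_z$; the $1$-level reiteration $\Phi_z=w'(z)\,\Omega_{\alpha(z)}$ then gives the multiple claim directly; and the Fourier computation $w'(0)=2c_1$ with $c_1=\frac{1}{2\pi}\int_{B_0}e^{-i\theta}\,d\theta=0$ gives $\Phi_0=0$. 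All steps check out; the only remark worth making is that $w'$ being unaffected by the additive normalization of $\tilde\alpha$ lets you fix $\tilde\alpha(0)=0$ without loss, which you implicitly use.
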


A case similar to Proposition \ref{singfam} can be obtained with just two spaces distributed on
four arcs on $\T$ as above: just consider $X_0 = \ell_{\infty}$ and $X_1 = \ell_1$, which
produces $X_z = \ell_2$ for every $z = t$, $z = it$, $t \in (-1, 1)$ and $\Phi_0 = 0$ while
$\Phi_z$ is a non-zero multiple of the Kalton-Peck map on $X_z$ for values of $z \in \D$
arbitrarily close to $0$. Thus, the differential process lacks local stability.
\smallskip

The next result explains, to some extent, the exceptional character of the previous examples.
It is a direct consequence of Theorem \ref{interpcoupfam2}.

\begin{theorem}\label{stabilityfam}
Let $(X_0, X_1)$ be an interpolation pair of K\"othe function spaces, and let us consider
the notation of Theorem \ref{interpcoupfam2}.
\begin{enumerate}
\item If the derivation $\Phi_{z_0}$ is bounded for some $z_0\in\mathbb D$ such that $0<\alpha(z_0)<1$
and $w'(z_0)\neq 0$, then $X_0 = X_1$ with equivalence of norms and $\Phi_z$ is bounded for each
$z\in \mathbb D$.
\item If the derivation $\Phi_{z_0}$ is trivial for some $z_0\in\mathbb D$ such that $0<\alpha(z_0)<1$
and $w'(z_0)\neq 0$, then $X_0$ is a weighted version of $X_1$ and $\Phi_z$ is trivial for each
$z\in \mathbb D$.
\end{enumerate}
\end{theorem}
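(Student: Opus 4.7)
The whole argument rests on the formula $\Phi_z = w'(z)\,\Omega_{\alpha(z)}$ from Theorem \ref{interpcoupfam2}, combined with the reiteration identification $X_z = (X_0,X_1)_{\alpha(z)}$ supplied by Theorem \ref{interpcoupfam}. A preliminary observation is that the hypothesis $0<\alpha(z_0)<1$ forbids $\alpha$ from being a.e.\ constant on $\mathbb T$, so by the maximum principle for the harmonic extension one has $\alpha(z)\in(0,1)$ for \emph{every} $z\in\mathbb D$. In particular the derivation $\Omega_{\alpha(z)}$ of the pair $(X_0,X_1)$ is a meaningful object throughout the disc, and boundedness or triviality of $\Phi_z$ on $X_z$ amounts to the corresponding property of $w'(z)\Omega_{\alpha(z)}$ on $(X_0,X_1)_{\alpha(z)}$.

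Given this, part (1) is immediate: since $w'(z_0)\neq 0$, boundedness of $\Phi_{z_0}$ forces boundedness of $\Omega_{\alpha(z_0)}$ on $(X_0,X_1)_{\alpha(z_0)}$, and since $\alpha(z_0)\in(0,1)$, Kalton's Theorem \ref{kalthm}(3) gives $X_0=X_1$ up to an equivalent renorming together with the boundedness of $\Omega_\theta$ for every $\theta\in(0,1)$. Hence $\Phi_z=w'(z)\Omega_{\alpha(z)}$ is bounded for every $z\in\mathbb D$; at zeros of $w'$ one has simply $\Phi_z=0$, which is trivially bounded.

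Part (2) is just as direct, now using Theorem \ref{stsplit}: triviality is preserved under multiplication by the non-zero scalar $w'(z_0)$ (Proposition \ref{trivial-deriv}), so $\Omega_{\alpha(z_0)}$ is trivial, whence there exists a weight $\omega$ with $X_1 = X_0(\omega)$ up to equivalent renorming. Applying Proposition \ref{twist-weights} to the pair $(X_0,X_0(\omega))$ shows that for every $\theta\in(0,1)$ the derivation $\Omega_\theta$ differs from the linear map $x\mapsto -(\log\omega)\,x$ by a bounded perturbation on $X_\theta$, hence is trivial by Proposition \ref{trivial-deriv}. Scalar multiplication by $w'(z)$ (or by $0$ at its zeros) preserves this property, so $\Phi_z$ is trivial at every $z\in\mathbb D$. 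There is no real obstacle: the content of the statement has already been deposited in Theorem \ref{interpcoupfam2} and in the pair-level results \ref{kalthm} and \ref{stsplit}, and the argument above is the mechanical transfer of those results to the family via the identity $\Phi_z=w'(z)\Omega_{\alpha(z)}$.
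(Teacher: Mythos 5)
Your proposal is correct and coincides with the route the paper intends: the paper gives no explicit proof for Theorem \ref{stabilityfam}, stating only that it is a direct consequence of Theorem \ref{interpcoupfam2}, and your argument is exactly the fleshing out of that claim via the formula $\Phi_z = w'(z)\Omega_{\alpha(z)}$, the positivity of the Poisson kernel (to keep $\alpha(z)\in(0,1)$ for all $z$), and the pair-level rigidity results \ref{kalthm}(3) and \ref{stsplit}. The only caveat worth recording is one the paper itself elides: part (2) invokes Theorem \ref{stsplit} (which assumes superreflexivity) and Proposition \ref{twist-weights} (which assumes the Radon--Nikodym property), so these hypotheses are tacitly in force in Theorem \ref{stabilityfam}(2) even though the statement does not list them.
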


\section{Stability of splitting for general Banach spaces}\label{stabilitygen}

For general Banach spaces, the problem of existence of local or global stability remains open.
Here we give some positive results for pairs of sequence spaces with a common basis and pairs of K\"othe
function spaces.
In the latter case, they provide more information than the results given before, which have an isomorphic
nature, while Theorem \ref{unificado}, under the additional hypotheses it imposes, yields isometric
uniqueness and stability.

In this section, given an interpolation pair $\overline X =(X_0,X_1)$, we take
$\mathscr{F}^\infty(\overline{X})\equiv\mathscr{F}^\infty(\mathbb{S},\Sigma)$ as Kalton space (see
Section \ref{sect:admissible}), and the $c$-extremals $f_{x,\theta}$ for $x$ at $\theta$ belong to
$\mathscr{F}^\infty(\overline{X})$.
Sometimes we denote $B_\theta(x)(z)= f_{x,\theta}(z)$ ($z\in \mathbb{S})$ for notational convenience.

%
%

Recall that, given $0<\theta<1$ and $t\in \mathbb{R}$, the invariance under vertical translations of
$\mathbb{S}$ implies that given $f$ in the Calderon space $\mathscr{C}$ such that $f(\theta)=x$, the
function $g(z)=f(z-it)$ is in $\mathscr{C}$ and satisfies $\|f\|_\mathscr{C}=\|g\|_\mathscr{C}$ and
$g(\theta +it) = x$; and the same is true for $\mathscr{F}^\infty(\overline{X})$.
Thus $X_\theta =X_{\theta+it}$ isometrically, and it is enough to study the scale
$\left(X_\theta\right)_{0<\theta<1}$.

Our analysis begins with an observation about the properties of the map $\theta \to \|\cdot\|_\theta$.
Recall that an interpolation pair $(X_0,X_1)$ is \emph{regular} if $\Delta$ is dense in both $X_0$
and $X_1$.

\begin{lemma}\label{continuity}
Let $(X_0,X_1)$ be a regular interpolation pair and let $0\leq\theta_0<\theta_1\leq 1$.
For every $x\in X_{\theta_0}\cap X_{\theta_1}$, the map $\theta \mapsto \|x\|_\theta\in\mathbb{R}$
is log-convex on $(\theta_0,\theta_1)$; it is therefore continuous with right and left derivatives on
any point of $(\theta_0,\theta_1)$.
\end{lemma}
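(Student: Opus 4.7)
The plan is to establish log-convexity on $(\theta_0,\theta_1)$ by reducing (via reiteration) to the classical log-convexity inequality for a regular interpolation pair, and then to deduce continuity and the existence of one-sided derivatives from the elementary theory of convex functions of one real variable.

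Fix $s<t$ in $(\theta_0,\theta_1)$ and $\lambda \in (0,1)$, and set $\theta = (1-\lambda)s+\lambda t$. Log-convexity on $(\theta_0,\theta_1)$ is exactly the pointwise inequality $\|x\|_\theta \leq \|x\|_s^{1-\lambda}\|x\|_t^\lambda$, so it is enough to establish this for arbitrary such triples. The first step is to invoke the reiteration theorem for the complex method to identify $X_\theta = (X_s,X_t)_\lambda$ isometrically. This can be obtained as a specialization of Theorem \ref{interpcoupfam} applied to the two-valued function $\alpha:\T \to \{s,t\}$ distributed on two arcs of harmonic measures $1-\lambda$ and $\lambda$ at a suitable base point, combined with the observation noted in the excerpt that the family method for two spaces in two arcs agrees, via a conformal map, with the pair method.

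The second step is the classical log-convexity estimate for the pair $(X_s,X_t)$, which follows from the scalar-multiplier trick. Given $\epsilon > 0$, set $M_s := (1+\epsilon)\|x\|_s$ and $M_t := (1+\epsilon)\|x\|_t$ and consider
\[
f(z) = (M_s/M_t)^{z-\lambda}\, x, \qquad z \in \overline{\mathbb S}.
\]
Then $f$ is analytic, satisfies $f(\lambda) = x$, and a direct estimate of its boundary norms on $\mathrm{Re}(z)\in\{0,1\}$ yields $\|f\|_{\mathscr{F}^\infty(X_s,X_t)} \leq M_s^{1-\lambda}M_t^\lambda$, hence $\|x\|_{(X_s,X_t)_\lambda} \leq M_s^{1-\lambda}M_t^\lambda$; letting $\epsilon \to 0$ gives the desired bound. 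Combined with the reiteration identification of the first step this proves log-convexity on $(\theta_0,\theta_1)$.

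Continuity and the existence of both one-sided derivatives at every interior point then follow at once from the fact that $\log\|x\|_\theta$ is a convex function on the open interval $(\theta_0,\theta_1)$. The only delicate point in the argument is the isometric nature of reiteration used in the first step; this relies on the regularity hypothesis on $(X_0,X_1)$ and is the place where the density of $\Delta$ in both $X_0$ and $X_1$ is used essentially, since only with equality of norms (rather than mere equivalence) does one obtain true log-convexity and not just a quasi-convex substitute.
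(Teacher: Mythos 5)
Your argument follows the same route as the paper's proof: establish log-convexity of $\theta\mapsto\|x\|_\theta$ by combining the classical estimate $\|x\|_\lambda \le \|x\|_0^{1-\lambda}\|x\|_1^\lambda$ with reiteration, and then read off continuity and one-sided derivatives from elementary facts about convex functions of a real variable. The only differences are cosmetic: the paper simply cites the well-known pair case together with the Bergh--L\"ofstr\"om reiteration theorem, whereas you (a) rederive the endpoint estimate from the scalar multiplier $f(z)=(M_s/M_t)^{z-\lambda}x$ (correct; the $(1+\epsilon)$ factors are actually unnecessary since the inequality already holds with $M_s=\|x\|_s$, $M_t=\|x\|_t$), and (b) reach reiteration via the family result (Theorem \ref{interpcoupfam}) plus the identification of a two-arc family with the pair method, which is a more circuitous path than citing the pair-reiteration theorem directly but is nonetheless valid. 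One step you should make explicit: to speak of $\|x\|_s$ and $\|x\|_t$ for $s,t$ in the interior of $(\theta_0,\theta_1)$ you first need $x\in X_s\cap X_t$, and this follows only from running the very same estimate with $s=\theta_0$, $t=\theta_1$; this is implicit in your writeup but needs to be stated before fixing arbitrary interior triples.
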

\begin{proof}
For each $\theta \in [\theta_0,\theta_1]$ one has
$\|x\|_\theta \leq \|x\|_{\theta_0}^{1-t} \|x\|_{\theta_1}^t$ when $\theta=(1-t)\theta_0+t\theta_1$:
the case $\theta_0=0$, $\theta_1=1$ is well-known, and the general case is a consequence of the
reiteration theorem for  complex interpolation \cite[4.6.1. Theorem]{BL}.
From this it follows that the map $\theta \mapsto \log \|x\|_\theta$ is convex on $[\theta_0,\theta_1]$,
and therefore continuous with right and left derivatives at every point of $(\theta_0,\theta_1)$.
\end{proof}

\subsection{Local bounded stability for coherent pairs} \label{sect:Lafforgue}
The coherent pairs in the title are those satisfying the thesis of Proposition \ref{ri-space}.

The proof of the following result is a part of the proof of \cite[Th\'eor\`eme]{daher}.
We include some details for the convenience of the reader.

\begin{lemma}\label{lemma:extremal}
Given $(X_0, X_1)$ a regular interpolation pair with $X_0$ reflexive, $x\in\Delta$, $\theta\in (0,1)$ and
a $1$-extremal $f_{x,\theta}$ one has $\|f_{x, \theta}(z)\|_z = \|x\|_\theta$ for every $z \in\mathbb{S}$.
\end{lemma}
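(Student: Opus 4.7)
The plan is to sandwich $f_{x,\theta}$ against a dual-side extremal and extract the equality via the maximum modulus principle. The easy direction $\|f_{x,\theta}(z)\|_z \leq \|x\|_\theta$ is immediate: by vertical translation invariance of $\mathbb{S}$, the function $w \mapsto f_{x,\theta}(w + i\,\mathrm{Im}\,z)$ lies in $\mathscr{F}^\infty(\overline X)$ with the same norm as $f_{x,\theta}$ and takes $\mathrm{Re}\,z$ to $f_{x,\theta}(z)$; since $\|\cdot\|_z = \|\cdot\|_{\mathrm{Re}\,z}$, this shifted function is admissible in the quotient norm defining $\|f_{x,\theta}(z)\|_z$, yielding the bound.

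For the reverse inequality, I would first use Hahn--Banach to pick $\ell \in X_\theta^*$ with $\|\ell\|_{X_\theta^*} = 1$ and $\langle \ell, x\rangle = \|x\|_\theta$. The decisive step is to realize $\ell$ as the value at $\theta$ of a bounded analytic function on the strip with values in $X_0^* + X_1^*$. This is exactly where the hypothesis on $X_0$ enters: reflexivity of $X_0$ together with regularity of the pair triggers the Calder\'on--Bergh duality theorem, which identifies $X_\theta^*$ isometrically with the complex interpolation space $(X_0^*, X_1^*)_\theta$. Hence, for each $\varepsilon>0$, there exists $g \in \mathscr{F}^\infty(X_0^*, X_1^*)$ with $g(\theta) = \ell$ and $\|g\|_{\mathscr{F}^\infty} \leq 1+\varepsilon$.

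Set $H(z) = \langle g(z), f_{x,\theta}(z)\rangle$, the pairing interpreted boundary-wise through $X_j^*$--$X_j$ for $j=0,1$ and extended inside the strip by analyticity (using that $x\in\Delta$ and the Poisson representation of both factors make the scalar pairing bounded and holomorphic on $\mathbb{S}$). Then on each edge of the strip,
\[
|H(j+it)| \;\leq\; \|g(j+it)\|_{X_j^*}\,\|f_{x,\theta}(j+it)\|_{X_j} \;\leq\; (1+\varepsilon)\,\|x\|_\theta,\qquad j=0,1,
\]
so Hadamard's three-lines theorem gives $|H(z)| \leq (1+\varepsilon)\|x\|_\theta$ throughout $\mathbb{S}$. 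Since $H(\theta) = \ell(x) = \|x\|_\theta$ and $\theta$ is an interior point, letting $\varepsilon\downarrow 0$ and invoking the maximum modulus principle forces $|H|\equiv \|x\|_\theta$ on $\mathbb{S}$.

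To conclude at an arbitrary $z_0\in \mathbb{S}$, a shift-and-evaluate argument mirroring the easy direction (replace $f_{x,\theta}$ by a near-extremal representing any $y\in X_{z_0}$, pair with $g$, apply three-lines) produces the general dual bound $|\langle g(z_0), y\rangle| \leq \|g\|_{\mathscr{F}^\infty}\,\|y\|_{z_0}$ for every $y \in X_{z_0}$. Applied to $y = f_{x,\theta}(z_0)$ this yields $\|x\|_\theta = |H(z_0)| \leq (1+\varepsilon)\|f_{x,\theta}(z_0)\|_{z_0}$; sending $\varepsilon\downarrow 0$ and combining with the easy direction gives the asserted equality. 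The chief obstacle is the dual-side step: producing the near-extremal $g$ realizing $\ell$ at $\theta$, which is exactly the content of Calder\'on--Bergh duality and is where the reflexivity hypothesis on $X_0$ is indispensable. Once $g$ is available, the rest is a clean application of the three-lines theorem and maximum modulus.
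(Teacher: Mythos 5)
Your strategy mirrors the paper's (which itself follows Daher): normalize, pick a norming functional $\ell$ for $x$ in $X_\theta^*$, lift it to a dual-side extremal, pair with $f_{x,\theta}$, and apply the maximum modulus principle. The gap lies in the sentence ``letting $\varepsilon\downarrow 0$ and invoking the maximum modulus principle forces $|H|\equiv \|x\|_\theta$.'' For each fixed $\varepsilon>0$ you only have $|H| \le (1+\varepsilon)\|x\|_\theta$ on $\mathbb{S}$ together with $H(\theta)=\|x\|_\theta$, so $\theta$ is \emph{not} a point where $|H|$ attains its supremum, and the maximum modulus principle says nothing. Since $H=H_\varepsilon$ depends on $\varepsilon$, ``letting $\varepsilon\downarrow 0$'' does not produce a single function to which the principle could be applied. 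The later identity ``$\|x\|_\theta = |H(z_0)|$'' rests on exactly this unsupported claim, and the final inequality does not close.

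The fix is either compactness or an exact dual extremal. The paper, citing Daher, works in $\mathscr{F}^2_\theta(\overline{X^*})$, where an \emph{exact} $1$-extremal $f^*$ for $x^*$ is available; the paired function $g$ then satisfies $|g|\le 1$ and $g(\theta)=1$ with no slack, and $g\equiv 1$ genuinely follows. If you wish to remain in $\mathscr{F}^\infty(\overline{X^*})$ with near-extremals, take $\varepsilon_n\downarrow 0$, observe that the $H_{\varepsilon_n}$ are uniformly bounded analytic functions on $\mathbb{S}$, extract a locally uniform (Montel) limit $H_\infty$ with $|H_\infty|\le\|x\|_\theta$ and $H_\infty(\theta)=\|x\|_\theta$, conclude $H_\infty\equiv\|x\|_\theta$, and then pass to the limit in your (correct) dual bound $|H_{\varepsilon_n}(z_0)|\le(1+\varepsilon_n)\|f_{x,\theta}(z_0)\|_{z_0}$; a Schwarz--Pick estimate gives the same conclusion quantitatively. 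The remaining steps --- the easy direction, the use of $X_0$ reflexive for Calder\'on duality, and the three-lines estimate for the dual bound --- are sound and parallel the paper's argument; what is missing is precisely the compactness that either Montel or the exact $\mathscr{F}^2_\theta$ extremal supplies.
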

\begin{proof}
It is enough to prove the result when $\|x\|_\theta=\|f_{x,\theta}\|_{\mathscr{F}^\infty(\overline{X})}=1$.
We select $x^*\in (X_\theta)^*\equiv (X^*)_\theta$ such that $\|x^*\|=\langle x,x^*\rangle=1$.
As in Daher's proof, we select $f^*\in \mathscr{F}_\theta^2(\overline{X^*})$ with $f^*(\theta)=x^*$ and
$\|f^*\|_{\mathscr{F}_\theta^2(\overline{X^*})}=1$.

Using \cite[4.2.3. Lemma]{BL} we can show that $g(z)=\langle f_{x, \theta}(z),f^*(z)\rangle$ defines
an analytic function.
Since $|g(z)|\leq 1$ for every $z \in \mathbb{S}$ and $g(\theta)=1$, the maximum principle for analytic
functions implies that $g(z)= 1$ for every $z \in \mathbb{S}$.
In particular $\|f_{x, \theta}(z)\|_z \geq 1$, hence $\|f_{x, \theta}(z)\|_z = 1$.
\end{proof}


\begin{lemma}\label{locallemma}
Let $(X_0,X_1)$ be a regular interpolation pair with $X_0$ reflexive, let $x \in \Delta$ and let
$0\leq \theta_0 <\theta< \theta_1 \leq 1$.
Suppose that there is a $1$-extremal $f_{x, \theta}$ which is derivable at $z=\theta$ as a function
with values in both spaces $X_{\theta_i}$ ($i=0,1$), and consider the derivation
$\Omega_\theta(x) = f_{x, \theta}'(\theta)$.
Then the right and left derivatives of $t \mapsto \|x\|_t$ at $\theta$ are bounded in modulus by
$\left \|\Omega_{\theta} (x) \right\|_{\theta}$.
\end{lemma}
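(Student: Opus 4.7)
The plan is to use the extremal function $f_{x,\theta}$ itself as a direct test vector for the norm around $\theta$, and then extract a first-order expansion out of the derivability assumption. First, by Lemma~\ref{lemma:extremal} (which applies since $X_0$ is reflexive and $x \in \Delta$), the norm $\|f_{x,\theta}(z)\|_z$ is constantly equal to $\|x\|_\theta$ on $\mathbb{S}$; in particular $\|f_{x,\theta}(t)\|_t = \|x\|_\theta$ for every real $t\in(0,1)$. For $t$ sufficiently close to $\theta$, the derivability hypothesis guarantees $f_{x,\theta}(t)-x\in X_{\theta_0}\cap X_{\theta_1}$, hence $f_{x,\theta}(t)-x\in X_t$ by the inclusion $X_{\theta_0}\cap X_{\theta_1}\hookrightarrow X_t$ coming from the reiteration theorem. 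Writing $x = f_{x,\theta}(t)-(f_{x,\theta}(t)-x)$ and $f_{x,\theta}(t) = x+(f_{x,\theta}(t)-x)$, and applying the triangle inequality in $X_t$, one obtains
$$
\bigl|\|x\|_t-\|x\|_\theta\bigr|\leq \|f_{x,\theta}(t)-x\|_t.
$$

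The next step is to convert this into an estimate involving $\Omega_\theta(x)$. The derivability assumption gives the expansion $f_{x,\theta}(t)-x = (t-\theta)\Omega_\theta(x)+R(t)$, where $\|R(t)\|_{\theta_i}=o(t-\theta)$ as $t\to\theta$ for $i=0,1$. Dividing by $|t-\theta|$ yields
$$
\frac{\bigl|\|x\|_t-\|x\|_\theta\bigr|}{|t-\theta|}\leq \bigl\|\Omega_\theta(x)\bigr\|_t+\bigl\|R(t)/(t-\theta)\bigr\|_t.
$$
To show the remainder term vanishes in the $X_t$ norm, I would combine the reiteration theorem \cite[4.6.1.~Theorem]{BL}, which identifies $(X_{\theta_0},X_{\theta_1})_s$ isometrically with $X_{(1-s)\theta_0+s\theta_1}$, with the Hadamard three-lines inequality: for any $y\in X_{\theta_0}\cap X_{\theta_1}$ and $t\in(\theta_0,\theta_1)$,
$$
\|y\|_t\leq \|y\|_{\theta_0}^{(\theta_1-t)/(\theta_1-\theta_0)}\,\|y\|_{\theta_1}^{(t-\theta_0)/(\theta_1-\theta_0)}.
$$
Applied to $y = R(t)/(t-\theta)$, both factors on the right tend to $0$ as $t\to\theta$, so $\|R(t)/(t-\theta)\|_t\to 0$.

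To finish, I would note that $\Omega_\theta(x)\in X_{\theta_0}\cap X_{\theta_1}$, since the complex derivative $f_{x,\theta}'(\theta)$ exists in each of these spaces. Applying Lemma~\ref{continuity} to the regular interpolation pair $(X_{\theta_0},X_{\theta_1})$ (regularity is automatic because $\Delta$ is dense in every $X_s$ for a regular original pair), and invoking reiteration again to re-express the interpolated norms as $\|\cdot\|_t$, the function $t\mapsto\|\Omega_\theta(x)\|_t$ is continuous at $\theta$. Letting $t\to\theta^{\pm}$ in the displayed inequality gives
$$
\left|\frac{d}{dt}\|x\|_t\right|_{t=\theta^{\pm}}\leq \|\Omega_\theta(x)\|_\theta,
$$
as desired.

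The main obstacle is the middle step: we control the remainder only in the pointwise norms $\|\cdot\|_{\theta_0}$ and $\|\cdot\|_{\theta_1}$, whereas the relevant norm is the moving $\|\cdot\|_t$. The Hadamard three-lines inequality via reiteration is the right tool to transport this control, but it must be applied carefully so that the log-convexity estimate is valid on the \emph{sub-interval} $(\theta_0,\theta_1)$ containing $\theta$, rather than on all of $(0,1)$ where the endpoint norms of the remainder are not directly controlled.
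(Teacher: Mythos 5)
Your proposal is correct and follows essentially the same line as the paper's proof: use Lemma~\ref{lemma:extremal} to convert $\|x\|_\theta$ into $\|f_{x,\theta}(t)\|_t$, apply the triangle inequality in $X_t$ to reduce to estimating $\|f_{x,\theta}(t)-x\|_t$, split off $\Omega_\theta(x)$, and kill the remainder using the endpoint norms via log-convexity while controlling $\|\Omega_\theta(x)\|_t$ by Lemma~\ref{continuity}. The only cosmetic difference is that you invoke the full three-lines/log-convexity bound $\|y\|_t\leq\|y\|_{\theta_0}^{1-s}\|y\|_{\theta_1}^{s}$ for the remainder, where the paper simply uses the cruder $\|y\|_t\leq\max_{i=0,1}\|y\|_{\theta_i}$, which suffices for the same conclusion.
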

\begin{proof}
By Lemma \ref{lemma:extremal} $\|x\|_\theta = \|f_{x,\theta}(\theta+\varepsilon)\|_{\theta + \varepsilon}$.
Hence
\begin{eqnarray*}
 \lim_{\varepsilon\to 0^+} \frac{1}{\varepsilon} \big| \|x\|_{\theta + \varepsilon} - \|x\|_{\theta}\big|
 &\leq & \limsup_{\varepsilon\to 0^{+}} \frac{1}{\varepsilon}
 \left(\|f_{x,\theta}(\theta+\varepsilon) -x \|_{\theta+\varepsilon}\right)\\
&\leq & \limsup_{\varepsilon\to 0^{+}}  \left( \|\Omega_\theta(x)\|_{\theta+\varepsilon}+
\|\frac{1}{\varepsilon} (f_{x,\theta}(\theta+\varepsilon)-x)-\Omega_\theta(x) \|_{\theta+\varepsilon}\right)\\
&\leq & \limsup_{\varepsilon\to 0^{+}}  \left( \|\Omega_\theta(x)\|_{\theta+\varepsilon}+
\max_{i=0,1} \|\frac{1}{\varepsilon} (f_{x,\theta}(\theta+\varepsilon)-x)-\Omega_\theta(x) \|_{\theta_i}\right).
\end{eqnarray*}
Note that $\Omega_\theta(x)$ belongs to $X_{\theta_0} \cap X_{\theta_1}$ by hypothesis.
So, by Lemma \ref{continuity}, we have that $\|\Omega_\theta(x)\|_{\theta+\varepsilon}$ tends to
$\|\Omega_\theta(x)\|_{\theta}$.
Since $\frac{1}{\varepsilon} (f_{x,\theta}(\theta+\varepsilon) -x)$ tends to $\Omega_\theta(x)$
in $X_{\theta_i}$, $i=0,1$, we get:
\begin{equation}\label{difestimate}
\left|\frac{d\|x\|_t}{dt}|_{t=\theta^{\pm}}\right| \leq  \left \| \Omega_{\theta} (x) \right\|_{\theta}.
\end{equation}
\end{proof}

The estimate (\ref{difestimate}) points out to the fact that the scale $(\Omega_t)_{0<t<1}$ seems to act
as ``the infinitesimal generator of the group of natural uniform homeomorphisms $B_t(.)(s) : X_t \to X_s$
between the spheres of the interpolation spaces", as it appears in \cite{daher}.

Next we give some conditions on an interpolation pair $(X_0, X_1)$ implying $X_0=X_1$ up to an
equivalent renorming.

\begin{prop}\label{mono}\label{ri-space}
Let $(X_0, X_1)$ be a regular interpolation pair of reflexive spaces.
Suppose that
\begin{enumerate}
\item $X_0$ and $X_1$ have a common monotone basis $(e_n)$, or
\item $X_0$ and $X_1$ are rearrangement invariant spaces on $[0,1]$.
\end{enumerate}
Then there is an increasing sequence $(E_n)$ of finite dimensional subspaces of $\Delta$ with
$\Delta_0=\cup_{n\in\N} E_n$ dense in $\Delta$, such that for every $x\in E_n$ we can select a
$1$-extremal $f_{x,\theta}$ so that the corresponding derivation map $\Omega_\theta$ takes $E_n$
into $E_n$ and is bounded on $E_n$.
%
\end{prop}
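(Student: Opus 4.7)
The plan is to exhibit, in both cases, a sequence $(\pi_n)$ of projections that are simultaneously contractive on $X_0$ and $X_1$, take their ranges as $(E_n)$, and then produce $E_n$-valued $1$-extremals by composing arbitrary $1$-extremals with $\pi_n$ pointwise on $\mathbb{S}$. In case (1), I would set $E_n=\mathrm{span}(e_1,\dots,e_n)$ and let $\pi_n$ be the $n$-th basis projection, which is a contraction on each $X_i$ by the monotonicity of the common basis; density of $\Delta_0=\cup_n E_n$ in $\Delta$ follows because, for $x\in\Delta$, $\pi_n x\to x$ in both $X_0$ and $X_1$, hence in the $\max$-norm of $\Delta$. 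In case (2), I would take $E_n$ to be the span of the indicator functions of dyadic intervals of length $2^{-n}$ in $[0,1]$ and $\pi_n=\mathbb{E}_n$ the associated conditional expectation. A classical argument (based on $\mathbb{E}_n$ being simultaneously a contraction on $L_1$ and $L_\infty$) yields $\|\mathbb{E}_n\|_{X_i\to X_i}\leq 1$ for any r.i.\ space $X_i$, and reflexivity of $X_i$ gives order continuity of its norm, which in turn implies $\mathbb{E}_n x\to x$ in each $X_i$, hence $\Delta_0$ is dense in $\Delta$.

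The core of the argument is then that, because $\pi_n$ is a contraction on each $X_i$, the pointwise action $f\mapsto \pi_n\circ f$ defines a norm-one linear operator on the Kalton space $\mathscr{F}^\infty(\overline X)$: analyticity is preserved and the boundary estimates $\|\pi_n(f(j+it))\|_{X_j}\leq \|f(j+it)\|_{X_j}$ are immediate. Given $x\in E_n$ and a $1$-extremal $f_x$ for $x$ at $\theta$ (whose existence is guaranteed by reflexivity via weak compactness applied to a sequence of $(1+1/k)$-extremals), I would set $f_{x,\theta}:=\pi_n\circ f_x$. Then $\|f_{x,\theta}\|_{\mathscr{F}^\infty(\overline X)}\leq \|f_x\|_{\mathscr{F}^\infty(\overline X)}=\|x\|_\theta$ and $f_{x,\theta}(\theta)=\pi_n(x)=x$, so $f_{x,\theta}$ is again a $1$-extremal. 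To obtain a coherent homogeneous selection on $\Delta_0$, put $n(x)=\min\{n:x\in E_n\}$ for $x\in\Delta_0$ and take $B_\theta(x)=\pi_{n(x)}\circ f_x$, extended homogeneously; then
\[
\Omega_\theta(x)\;=\;\delta_\theta'B_\theta(x)\;=\;\pi_{n(x)}\bigl(f_x'(\theta)\bigr)\;\in\;E_{n(x)}\;\subset\;E_n
\]
for every $x\in E_n$, so $\Omega_\theta$ maps $E_n$ into $E_n$ as required.

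Boundedness of $\Omega_\theta$ on $E_n$ is then automatic: $B_\theta$ is bounded (Lemma \ref{stable-lemma}) and $\delta_\theta'$ is continuous (Lemma \ref{deltas-lemma}), so $\Omega_\theta$ is a homogeneous map bounded from $X_\theta$ into $\Sigma=X_0+X_1$; on the finite-dimensional $E_n$ the $\Sigma$-norm and the $X_\theta$-norm are equivalent, so the restriction is bounded. I expect the only delicate point to be case (2): one must carefully invoke the contractivity of dyadic conditional expectations on an arbitrary r.i.\ space on $[0,1]$ and the density of dyadic step functions in $\Delta$ under only the reflexivity hypothesis. Both facts are classical, but they rely essentially on the structural features of r.i.\ spaces (simultaneous $L_1$/$L_\infty$ contractivity of $\mathbb{E}_n$ plus order continuity of reflexive r.i.\ norms), and this is the part of the argument that requires genuine input beyond the soft projection-composition trick used to pass from an arbitrary extremal to an $E_n$-valued one.
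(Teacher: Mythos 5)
Your proposal is correct and follows essentially the same route as the paper: in both cases you pick the same $E_n$ (basis spans, respectively dyadic step functions), use the same norm-one projections $\pi_n$ (basis projections, respectively dyadic conditional expectations), and build the $E_n$-valued $1$-extremal by composing an arbitrary $1$-extremal with $\pi_n$ pointwise so that $\Omega_\theta(x)=\pi_n(f_x'(\theta))\in E_n$ with boundedness on $E_n$ coming from equivalence of all norms on a finite-dimensional space. You make explicit several points the paper leaves implicit (density of $\Delta_0$ in $\Delta$, order continuity from reflexivity in the r.i.\ case, and the coherent choice $n(x)=\min\{n:x\in E_n\}$ giving a single homogeneous selection), but these are amplifications of the same argument, not a different one.
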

\begin{proof}
Given $0<\theta<1$ and $x \in X_\theta$, there exists a $1$-extremal $g_{x,\theta}$ by
\cite[Proposition 3]{daher}.

(1) Take $E_n=[e_1,\ldots,e_n]$ and denote by $P_n$ the natural norm-one projection
from $\Sigma$ onto $E_n$.
For $x \in E_n$, if $g_{x, \theta}$ is a $1$-extremal then
$f_{x, \theta}(z) = P_n \left( f_{x,\theta}(z)\right)$ defines a $1$-extremal that satisfies
the remaining conditions because all norms are equivalent on $E_n$ and for $y\in E_n$
$$
\Omega_\theta(y)= g_{y,\theta}'(\theta) =  \left( P_n f_{y,\theta} \right)'(\theta) =
P_n \left( f_{y,\theta}'(\theta)\right).
$$

(2) The proof is similar: For each $n\in \N$ we take as $E_n$ the subspace generated by the
characteristic functions of the intervals $\big((k-1)/2^n,k/2^n\big)$, $k=1,\ldots,2^n$.
The arguments in the proof of \cite[Theorem 2.a.4]{lindtzaf-2} show that
$$
P_n f =\sum_{k=1}^{2^n} 2^n \left(\int_0^1 f \chi_{n,k} dt\right) \chi_{n,k}
$$
define a norm-one projection onto $E_n$.
\end{proof}

\begin{theorem}[Local bounded stability]\label{local}
Let $(X_0,X_1)$ be an interpolation pair of spaces as in Proposition \ref{ri-space}
and let $0\leq \theta_0<\theta_1\leq 1$.
Suppose that $\sup_{\theta_0 < t < \theta_1}\|\Omega_t: X_t \to X_t\|<\infty$.
Then $X_0 = X_1$, up to an equivalent renorming.
\end{theorem}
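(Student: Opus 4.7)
The strategy is to combine the finite dimensional approximation of Proposition \ref{ri-space} with the differential inequality of Lemma \ref{locallemma} in order to get a uniform Lipschitz bound for $t\mapsto \log\|x\|_t$ on $(\theta_0,\theta_1)$, and then to invoke Kalton's rigidity Theorem \ref{kalthm}(3).

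First I would fix the sequence $(E_n)$ of finite dimensional subspaces furnished by Proposition \ref{ri-space}, whose union $\Delta_0$ is dense in $\Delta$, together with the associated $1$-extremals $f_{x,\theta}$ and corresponding derivations $\Omega_\theta$. By construction, for each $x\in E_n$ the extremal $f_{x,\theta}$ is valued in the finite dimensional space $E_n$, so its derivative at $\theta$ exists in each space $X_0,X_1$, and $\Omega_\theta$ carries $E_n$ into itself. Writing $M:=\sup_{\theta_0<t<\theta_1}\|\Omega_t:X_t\to X_t\|<\infty$ and applying Lemma \ref{locallemma} pointwise, I obtain
\[
\Bigl|\tfrac{d}{dt}\|x\|_t\Bigr|_{t=\theta^{\pm}}\le \|\Omega_\theta(x)\|_\theta\le M\,\|x\|_\theta
\qquad (\theta\in(\theta_0,\theta_1),\ x\in\Delta_0).
\]

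Second, dividing by $\|x\|_\theta$ gives $\bigl|(\log\|x\|_t)'\bigr|_{t=\theta^{\pm}}\le M$ on $(\theta_0,\theta_1)$. Since Lemma \ref{continuity} says $t\mapsto\log\|x\|_t$ is convex there, this one-sided derivative bound forces $M$-Lipschitzness, so
\[
e^{-M|s-t|}\|x\|_t \le \|x\|_s \le e^{M|s-t|}\|x\|_t,\qquad s,t\in(\theta_0,\theta_1),\ x\in\Delta_0,
\]
and density of $\Delta_0$ in $\Delta$ together with continuity of the interpolation norms on $\Delta$ extends the inequality to every $x\in\Delta$. In particular, for every fixed $t^{*}\in(\theta_0,\theta_1)$, $\Omega_{t^{*}}$ is a bounded map $X_{t^{*}}\to X_{t^{*}}$.

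Third, I would conclude by passing from local boundedness at a single point $t^{*}$ to the global identification $X_0=X_1$. In case (2) of Proposition \ref{ri-space} the pair $(X_0,X_1)$ is a pair of rearrangement invariant K\"othe function spaces on $[0,1]$, so Kalton's Theorem \ref{kalthm}(3) applies verbatim at $t^{*}$ and yields $X_0=X_1$ up to an equivalent renorming.

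The main obstacle is case (1), where $X_0,X_1$ share only a common monotone basis $(e_n)$ and need not form a K\"othe pair, so Theorem \ref{kalthm}(3) cannot be quoted directly. The natural remedy is to mimic Kalton's rigidity argument using the ingredients available in the basis setting: the norm-one coordinate projections $P_N$ play the role of multiplication by characteristic functions, each $E_n=P_n(\Sigma)\cap\Delta$ inherits the uniform Lipschitz comparison of the norms $\|\cdot\|_t$ from step two, and one leverages the monotonicity of $(e_n)$ together with the reiteration theorem to propagate the interior comparability to the endpoints $t=0,1$. Once this basis-version of Kalton's theorem is in place, the conclusion $X_0=X_1$ up to renorming follows just as in case (2).
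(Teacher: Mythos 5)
Your first two steps reproduce the paper's argument exactly: Lemma \ref{locallemma} gives
$\bigl|(d/dt)\|x\|_t\bigr|_{t=\theta^\pm}\le M\|x\|_\theta$ for $x\in\Delta_0$, a Gr\"onwall/convexity argument converts this into a two-sided comparison
$e^{-M|s-t|}\|x\|_t\le\|x\|_s\le e^{M|s-t|}\|x\|_t$ on the open interval, and density then shows that all the spaces $X_s$ for $s\in(\theta_0,\theta_1)$ coincide up to equivalent renorming. This is precisely what the paper does, except that it phrases the estimate via the monotone function $g(s)=e^{Ms}\|x\|_s$ rather than via Lipschitzness of $\log\|x\|_s$; the two formulations are the same.

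The gap is entirely in your third step. Having shown that the interpolation spaces are constant on an open subinterval, you still need to conclude $X_0=X_1$. You propose to invoke Kalton's Theorem \ref{kalthm}(3), but this only applies to K\"othe function spaces, and as you yourself observe it is not available in case (1) of Proposition \ref{ri-space} (a common monotone basis need not give a K\"othe pair). Your proposed ``remedy'' --- mimic Kalton's rigidity argument, use the coordinate projections and the reiteration theorem to ``propagate'' comparability to the endpoints --- is only a sketch of a direction, not an argument, and nothing in it explains why constancy of the scale on an open interval would force equality of the endpoint spaces in the general basis setting. Moreover, falling back on Kalton in case (2) is circular in spirit: part of the point of this theorem is to give a route to stability that does not rely on the K\"othe-specific centralizer machinery. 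What the paper actually does at this point is cite a theorem of Stafney (reference \cite{staf}, Theorem 1.7): if the complex interpolation spaces $X_s$ agree with equivalence of norms for $s$ in an open subinterval, then $X_0=X_1$ with equivalence of norms. This is exactly the reiteration-type principle you gesture at, and it is what closes the argument uniformly in both cases (1) and (2). Without it (or an equivalent substitute), your case (1) is unproved.
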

\begin{proof}
Fix $x\in\Delta_0$. For $\theta_0 < s < \theta_1$ one has
$$
\left |\frac{d\|x\|_t}{dt}\right|_{t=s^{+}} \leq  \left\|\Omega_{s} (x) \right\|_s \leq M\|x\|_s.
$$

If we set $g(s)=e^{Ms}\|x\|_s$ then
$$
\left(\frac{dg}{dt}\right)_{t=s^+}= e^{Ms}\left(M\|x\|_s+\left(\frac{d\|x\|_t}{dt}\right)_{t=s^{+}}\right)
\geq 0.
$$
Since $g$ is continuous, it is nondecreasing on $(\theta_0,\theta_1)$.
Therefore, whenever $[\theta-\varepsilon,\theta+\varepsilon] \subset (\theta_0,\theta_1)$ one has
$g(\theta+\varepsilon) \geq g(\theta-\varepsilon)$ which implies
$$
\|x\|_{\theta+\varepsilon} \geq e^{-M(\theta+\varepsilon)}e^{M(\theta-\varepsilon)}
\|x\|_{\theta-\varepsilon} = e^{-2M\varepsilon}  \|x\|_{\theta-\varepsilon}.
$$
Working with $e^{-Ms}\|x\|_s$ instead we obtain
$\|x\|_{\theta+\varepsilon} \leq e^{2M\varepsilon} \|x\|_{\theta-\varepsilon}$.

By density we get $X_{\theta+\varepsilon}=X_{\theta-\varepsilon}$, thus $X_{s} = X_{\theta}$ with
equivalence of norms for $|\theta - s|\leq \varepsilon$, and a result of Stafney \cite[Theorem 1.7]{staf}
implies that $X_0=X_1$ with equivalence of norms.
\end{proof}

Pisier \cite{pisierinter}, motivated by an observation by V. Lafforgue (that certain Banach spaces
called \emph{uniformly curved} do not admit coarse embeddings of expanding graphs), defined the
\emph{$\theta$-euclidean spaces} as those obtained by interpolation of a family of norms on $\C^n$
which are euclidean on a set of positive measure $\theta$, and the {\em $\theta$-hilbertian spaces}
as the ultraproducts of families of $\theta$-euclidian spaces.
He proved that some natural uniformly curved spaces are isomorphic to subspaces of quotients
of $\theta$-hilbertian spaces.
The extrapolation theorem of \cite{pisierlattices} implies that all uniformly convex Banach lattices
are $\theta$-hilbertian and uniformly curved; however the question remains open for uniformly convex
spaces without lattice structure.
Therefore the study of properties of general interpolation scales is also relevant to this context.

\subsection{Isometric rigidity of linear derivations for optimal interpolation pairs}

As we remarked in the Introduction, \cite[Theorem 5.2]{cjmr} proves the estimate
\begin{equation*}\label{difestimate1}
\frac{d}{d\theta} \|x\|_{\theta,1} \sim \|x\|_{\theta,1} + \| \Omega_\theta (x) \|_{\theta,1}.
\end{equation*}
for the real $(\theta, 1)$-method of interpolation.
From this fact, an analogue of Theorem \ref{local} is derived \cite[Theorem 5.16]{cjmr}:
If the maps $\Omega_\theta$ are uniformly bounded for all $|\theta -\theta_0|<\varepsilon$
then $X_0=X_1$.
%
Moreover, \cite[Theorem 5.17]{cjmr} shows that the $(\theta,q)$-method has a stronger stability
property: if $\Omega_\theta$ is bounded for a single inner point $\theta$ then $X_0=X_1$.
A similar result for the complex interpolation method is still unknown in general, but we will
give some partial positive results in this section.
\medskip

We will consider the following subclass of interpolation pairs.

\begin{defin}
An interpolation pair $(X_0,X_1)$ will be called \emph{optimal} if, for every
$0<\theta<1$ and each $x \in X_\theta$, there exists a unique $1$-extremal $f_{x,\theta}$.
\end{defin}

Daher shows in \cite[Proposition 3]{daher} that a regular interpolation pair of reflexive spaces
is optimal when one of the spaces is strictly convex.
The following result was essentially observed by Daher.


\begin{lemma}
Let $(X_0, X_1)$ be an optimal interpolation pair with $X_0$ reflexive.
For all $x \in \Delta$ and $t, z  \in \mathbb{S}$ we have
\begin{enumerate}
\item $\|B_t(x)(z)\|_z = \|x\|_t$,
\item $B_t(x)=B_z\left(B_t(x)(z)\right)$,
\item $B_t(x)'(z) = \Omega_{z} \left(B_t(x)(z)\right)$.
\end{enumerate}
\end{lemma}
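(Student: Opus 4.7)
The plan is to exploit the uniqueness of $1$-extremals supplied by the optimality hypothesis, and to read off the three properties as consequences of this uniqueness combined with Lemma \ref{lemma:extremal}.

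First, part (1) is essentially Lemma \ref{lemma:extremal}: since $(X_0,X_1)$ is a regular pair with $X_0$ reflexive (optimality subsumes regularity in the relevant sense), and $B_t(x)=f_{x,t}$ is a $1$-extremal for $x$ at $t$, that lemma gives $\|B_t(x)(z)\|_z = \|x\|_t$ for every $z\in\mathbb{S}$. (Strictly speaking Lemma \ref{lemma:extremal} is stated for $x\in\Delta$, which is precisely our hypothesis.)

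For part (2), I note that $B_t(x)\in\mathscr{F}^\infty(\overline X)$ with $\|B_t(x)\|_{\mathscr{F}^\infty(\overline X)}=\|x\|_t$, and by (1) its value at $z$ is $B_t(x)(z)$ with $\|B_t(x)(z)\|_z=\|x\|_t$. Thus $B_t(x)$ is itself a $1$-extremal for $B_t(x)(z)$ at $z$. Optimality forces the $1$-extremal to be unique, so $B_t(x)=B_z\bigl(B_t(x)(z)\bigr)$ as elements of $\mathscr{F}^\infty(\overline X)$.

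For part (3), recall that the derivation at $z$ arising from the selection $B_z$ of $1$-extremals is defined by $\Omega_z(y)=B_z(y)'(z)$. Applying this to $y=B_t(x)(z)$ and differentiating the identity from (2) at the point $z$, I obtain
\[
\Omega_z\bigl(B_t(x)(z)\bigr)=B_z\bigl(B_t(x)(z)\bigr)'(z)=B_t(x)'(z).
\]

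The whole argument is driven by uniqueness of $1$-extremals, so there is no serious obstacle once optimality is in hand; the only point to check carefully is that the selection $B_z$ used to define $\Omega_z$ can indeed be taken to be the (now canonical) $1$-extremal selection, which is immediate from optimality and from homogeneity $B_z(\lambda y)=\lambda B_z(y)$ (itself a consequence of uniqueness).
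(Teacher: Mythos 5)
Your proof is correct and follows essentially the same route as the paper: part (1) is Lemma \ref{lemma:extremal}, part (2) is uniqueness of $1$-extremals applied to the two candidate extremals for $B_t(x)(z)$ at $z$, and part (3) is obtained by differentiating the identity in (2) and invoking the definition $\Omega_z(y)=B_z(y)'(z)$.
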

\begin{proof}
(1) was proved in Lemma \ref{lemma:extremal}, (2) follows from the uniqueness of the extremals,
since both functions have the same norm and take the value $B_t(x)(z)$ at $z$, and (3) follows
from (2) and $B_t(x)'(z) = \Omega_{z}(x)$.
%
\end{proof}

\begin{lemma}\label{unic}
Let $(X_0,X_1)$ be an optimal interpolation pair.
For all $0<\theta<1$ and $t \in \R$ one has $\Omega_{\theta+it}=\Omega_\theta$.
\end{lemma}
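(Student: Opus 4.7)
The plan is to exploit the vertical translation invariance of the strip $\mathbb{S}$ together with the uniqueness clause in the definition of an optimal pair. As the excerpt notes, if $f\in\mathscr{F}^\infty(\overline{X})$ then $g(z)=f(z-it)$ is again in $\mathscr{F}^\infty(\overline{X})$ with $\|g\|_{\mathscr{F}^\infty(\overline{X})}=\|f\|_{\mathscr{F}^\infty(\overline{X})}$, and this is precisely the mechanism that witnesses the isometric identification $X_\theta=X_{\theta+it}$.

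First I would fix $x\in X_\theta$ and take the unique $1$-extremal $f_{x,\theta}=B_\theta(x)$. Then I would define
\[
g(z):=f_{x,\theta}(z-it)\qquad (z\in\mathbb{S}).
\]
The translation property just recalled ensures $g\in\mathscr{F}^\infty(\overline{X})$ and $\|g\|_{\mathscr{F}^\infty(\overline{X})}=\|f_{x,\theta}\|_{\mathscr{F}^\infty(\overline{X})}=\|x\|_\theta=\|x\|_{\theta+it}$. Since moreover $g(\theta+it)=f_{x,\theta}(\theta)=x$, the function $g$ is a $1$-extremal for $x$ at $\theta+it$.

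Now I would invoke optimality: the pair $(X_0,X_1)$ admits a unique $1$-extremal at every inner point, so $g$ must equal $B_{\theta+it}(x)$. Differentiating the identity $B_{\theta+it}(x)(z)=f_{x,\theta}(z-it)$ at $z=\theta+it$ and using the chain rule yields
\[
\Omega_{\theta+it}(x)=B_{\theta+it}(x)'(\theta+it)=f_{x,\theta}'(\theta)=\Omega_\theta(x),
\]
which is the desired equality.

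I do not foresee a real obstacle here: the only slightly delicate point is to ensure that $g$ remains inside the Kalton space $\mathscr{F}^\infty(\overline{X})$ and has the same $\mathscr{F}^\infty$-norm as $f_{x,\theta}$, but this is exactly the vertical translation invariance recalled immediately before Lemma~\ref{continuity}. Everything else is a one-line consequence of the uniqueness of extremals guaranteed by optimality.
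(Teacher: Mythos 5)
Your proof is correct and follows essentially the same route as the paper: both observe that $z\mapsto B_\theta(x)(z-it)$ is a $1$-extremal for $x$ at $\theta+it$, invoke uniqueness of extremals to identify it with $B_{\theta+it}(x)$, and then differentiate at $z=\theta+it$. Your write-up simply makes explicit the role of the vertical translation invariance of $\mathscr{F}^\infty(\overline{X})$, which the paper leaves implicit.
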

\begin{proof}
Observe that $B_\theta(x)(z-it)= B_{\theta + it}(x)(z)$ since both are extremals for $x$ at $z=\theta+it$.
%
Hence $\Omega_{\theta + it}(x)=B_{\theta+it}(x)'(\theta+it) = B_\theta(x)'(z) =\Omega_\theta(x)$.
\end{proof}

We are ready to obtain some stability results when $\Omega_\theta$ is linear and bounded.
We start with the simplest case $\Omega_\theta=0$.

\begin{prop}\label{nullstabilitypair}
Let $(X_0, X_1)$ be an optimal interpolation pair with $X_0$ reflexive.
Then $\Omega_\theta=0$ for some $0<\theta<1$ if and only if $X_0=X_1$ isometrically.
\end{prop}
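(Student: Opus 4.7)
The easy direction is that $X_0 = X_1$ isometrically implies $\Omega_\theta = 0$. Indeed, if the two spaces coincide as a single Banach space $X$, then for any $x \in X$ the constant map $z \mapsto x$ lies in $\mathscr{F}^\infty(\overline X)$ with norm $\|x\|_X$; combining $\|x\|_\theta \leq \|x\|_X$ (obtained by testing against the constant) with $\|x\|_X = \|x\|_{X_0+X_1} \leq \|x\|_\theta$ shows that the constant is a $1$-extremal. Optimality forces it to be the only $1$-extremal for $x$ at $\theta$, so $\Omega_\theta(x)$, being its derivative at $\theta$, vanishes.

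For the converse, assume $\Omega_\theta = 0$ for some $\theta \in (0,1)$. By Lemma \ref{unic} the derivation vanishes at every point of the vertical line $\theta + i\R$. Fix $x \in \Delta$ and let $f = B_\theta(x)$ be its unique $1$-extremal. Applying the identity $B_t(x)'(z) = \Omega_z(B_t(x)(z))$ from the lemma preceding Lemma \ref{unic} at the points $z = \theta + it$, we get $f'(\theta + it) = 0$ for every $t \in \R$. Since $f' \colon \mathbb{S} \to \Sigma$ is a $\Sigma$-valued analytic function vanishing on a line interior to the strip, the identity principle for Banach-space-valued analytic maps forces $f' \equiv 0$ on $\mathbb{S}$. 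Thus $f$ is the constant function equal to $x$.

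Membership of $f$ in $\mathscr{F}^\infty(\overline X)$ then yields $x \in X_0 \cap X_1$ with $\max(\|x\|_0, \|x\|_1) \leq \|f\|_{\mathscr{F}^\infty} = \|x\|_\theta$, while the log-convexity bound of Lemma \ref{continuity} gives $\|x\|_\theta \leq \|x\|_0^{1-\theta}\|x\|_1^\theta \leq \max(\|x\|_0, \|x\|_1)$; hence $\|x\|_0 = \|x\|_\theta = \|x\|_1$ for every $x \in \Delta$. The remaining step, and the only real obstacle, is promoting this equality of norms on $\Delta$ to the identification $X_0 = X_1$ isometrically: invoking regularity of the pair, so that $\Delta$ is dense in each $X_j$, any $\|\cdot\|_0$-Cauchy sequence in $\Delta$ is automatically $\|\cdot\|_1$-Cauchy, and the joint continuous embedding into $\Sigma$ identifies the two completions, producing the desired isometric identification of $X_0$ and $X_1$.
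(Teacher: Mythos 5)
Your proof is correct and follows essentially the same route as the paper's: the forward direction identifies the constant function as the unique $1$-extremal, and the converse uses Lemma \ref{unic} together with the identity $B_\theta(x)'(z)=\Omega_z(B_\theta(x)(z))$ to show $B_\theta(x)$ is constant on a vertical line and hence on $\mathbb{S}$, yielding $\|x\|_0=\|x\|_\theta=\|x\|_1$ on $\Delta$. You spell out the final passage from $\Delta$ to the full spaces more explicitly (via regularity and density), but this is the same argument as in the paper, which invokes regularity implicitly through Lemma \ref{lemma:extremal}.
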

\begin{proof}
The if part is well-known and it easily follows from $B_\theta(x)(z) = x$ for $x\in \Delta$.
As for the converse, consider the function $F: \R \to \Sigma$ defined by
$F(t)=B_\theta(x)(\theta+it)$.

This function is constant since $F'(t) = \Omega_{\theta+ it}\left(B_\theta(x)(\theta+ it)\right)=0$.
Thus the analytic function $B_\theta(x)$ is constant on the vertical line through $\theta$, hence
constant on $\mathbb{S}$. In particular
$\|x\|_\theta = \|B_\theta(x)(\theta)\|_\theta = \|B_\theta(x)(z) \|_z = \|x\|_z$ for each $z$.
\end{proof}

Note that this isometric result is new even in the context of K\"othe spaces.



Recall that an operator $T$ acting on a Banach space $X$ is said to be \emph{hermitian} when
$e^{itT}$ is an isometry on $X$ for all $t\in\R$ \cite{bonsallduncan}.


\begin{theorem}\label{unificado}
Let $(X_0,X_1)$ be an interpolation pair of spaces as in Proposition \ref{ri-space}.
Suppose that $(X_0, X_1)$ is optimal and $\Omega_\theta: X_\theta \rightarrow \Sigma$
is linear for some $0<\theta<1$. Then
\begin{enumerate}
\item $\Omega_z(x)=\Omega_\theta(x)$ for all $z\in \mathbb{S}$ and all $x\in \Delta_0$.
\item For every $0<s<1$, the map $x \in \Delta_0\mapsto e^{s\Omega_\theta}x$ induces
an isometry between $X_0$ and $X_s$ which gives $\|x\|_s=\|e^{-s\Omega_\theta} x\|_0$.
\item $\Omega_z$ is an hermitian operator on $X_z$ for all $z\in \mathbb{S}$.
\end{enumerate}
\end{theorem}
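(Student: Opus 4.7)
The plan is to pin down the extremal $B_\theta(x)$ explicitly on the finite-dimensional pieces supplied by Proposition \ref{ri-space}, and then read off (1), (2), (3) directly from that formula. Concretely, I would fix the increasing sequence $(E_n)$ of finite-dimensional subspaces of $\Delta$ with $\Delta_0=\bigcup_n E_n$ dense, together with the norm-one projections $P_n$. For $x\in E_n$, the function $P_n B_\theta(x)\in \mathscr F^\infty(\overline X)$ satisfies $(P_n B_\theta(x))(\theta)=x$ and $\|P_n B_\theta(x)\|_{\mathscr F}\le \|x\|_\theta$, so it is also a $1$-extremal. Optimality then forces $B_\theta(x)=P_n B_\theta(x)$, and consequently $B_\theta(x)(z)\in E_n$ for every $z\in\mathbb S$; in particular $\Omega_\theta$ restricts to a bounded linear operator on the finite-dimensional $E_n$.

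Next I would identify this extremal. On the vertical line through $\theta$ the function $F(t)=B_\theta(x)(\theta+it)$ is $E_n$-valued and, by Lemma \ref{unic}, satisfies $F'(t)=i\,B_\theta(x)'(\theta+it)=i\,\Omega_{\theta+it}(F(t))=i\,\Omega_\theta F(t)$ with $F(0)=x$. Uniqueness of the ODE in the finite-dimensional $E_n$ yields $F(t)=e^{it\Omega_\theta}x$, and since both $B_\theta(x)$ and $z\mapsto e^{(z-\theta)\Omega_\theta}x$ are analytic $E_n$-valued functions coinciding on that vertical line, the identity principle gives
\[
B_\theta(x)(z)=e^{(z-\theta)\Omega_\theta}x,\qquad z\in\mathbb S.
\]
From here (1) is immediate: $\Omega_z(B_\theta(x)(z))=B_\theta(x)'(z)=\Omega_\theta B_\theta(x)(z)$, and bijectivity of $e^{(z-\theta)\Omega_\theta}$ on $E_n$ gives $\Omega_z=\Omega_\theta$ on $E_n$, hence on $\Delta_0$.

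For (3), Lemma \ref{lemma:extremal} together with vertical translation invariance yields $\|e^{it\Omega_\theta}x\|_\theta=\|B_\theta(x)(\theta+it)\|_{\theta+it}=\|x\|_\theta$ for $x\in E_n$, so $\{e^{it\Omega_\theta}\}_{t\in\mathbb R}$ is an isometry group on the dense subspace $\Delta_0\subset X_\theta$ and extends uniquely to an isometry group on $X_\theta$, making $\Omega_\theta$ hermitian. The same argument at an arbitrary inner point $s\in(0,1)$, combined with (1), shows $\Omega_s$ hermitian on $X_s$, and vertical invariance then extends this to every $z\in\mathbb S$. For (2), I would apply the extremal formula at a generic $s_1\in(0,1)$, which by (1) yields $B_{s_1}(y)(z)=e^{(z-s_1)\Omega_\theta}y$, and read Lemma \ref{lemma:extremal} as $\|e^{(s-s_1)\Omega_\theta}y\|_s=\|y\|_{s_1}$ for $y\in\Delta_0$ and $0<s<1$. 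Letting $s_1\to 0^+$, the left-hand side tends to $\|e^{s\Omega_\theta}y\|_s$ by continuity of the fixed norm $\|\cdot\|_s$ on the finite-dimensional subspace containing $y$, while the right-hand side tends to $\|y\|_0$ by the classical boundary continuity $\lim_{s_1\to 0^+}\|y\|_{s_1}=\|y\|_0$ valid for $y$ in the intersection space of a regular complex interpolation pair. This yields $\|x\|_s=\|e^{-s\Omega_\theta}x\|_0$ on $\Delta_0$, which extends by density to an isometry $X_0\to X_s$.

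The main obstacle is the first rigidity step: forcing the unique extremal $B_\theta(x)$ to remain inside the finite-dimensional $E_n$. Everything that follows is an essentially linear consequence, but this rigidity is precisely what optimality buys us via the projection-and-uniqueness trick, and it is what converts the a priori infinite-dimensional analytic family $z\mapsto B_\theta(x)(z)$ into a finite-dimensional object on which the ODE computation and the analytic continuation are transparent. A secondary, more routine obstacle is the boundary continuity $\|y\|_{s_1}\to\|y\|_0$ needed in (2); this is a standard feature of complex interpolation for regular pairs but must be invoked explicitly to reach the endpoint.
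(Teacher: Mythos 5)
Your argument follows the same core route as the paper's: optimality plus the projections $P_n$ from Proposition~\ref{ri-space} force $B_\theta(x)$ to live in $E_n$, the ODE $f'=\Omega_\theta f$ on the vertical line through $\theta$ together with analyticity on $\mathbb S$ give the explicit formula $B_\theta(x)(z)=e^{(z-\theta)\Omega_\theta}x$, and (1) and (3) fall out immediately. You have, usefully, made explicit a rigidity step that the paper leaves implicit: optimality forces $B_\theta(x)=P_nB_\theta(x)$, which is precisely what justifies the paper's otherwise unsupported remark that the unique extremal ``is analytic as a map into $\Delta$.'' The one genuine deviation is in (2). The paper reaches the endpoints by identifying the a.e.\ radial boundary limits of the $\mathscr F^\infty$-function $B_\theta(x)$ with $e^{(j+it-\theta)\Omega_\theta}x$ and upgrading ``a.e.\ $t$'' to ``all $t$'' by continuity in $E_n$; you instead let $s_1\to 0^+$ in the interior identity $\|e^{(s-s_1)\Omega_\theta}y\|_s=\|y\|_{s_1}$ and invoke the endpoint continuity $\lim_{s_1\to 0^+}\|y\|_{s_1}=\|y\|_0$ for $y\in\Delta$. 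Be aware that this endpoint continuity is not established in the paper and is not free: Lemma~\ref{continuity} (log-convexity) only yields $\limsup_{s_1\to 0^+}\|y\|_{s_1}\le\|y\|_0$, and the matching $\liminf$ requires a separate argument --- for instance a duality estimate using norming functionals in $\Delta^*=X_0^*\cap X_1^*$, which is available here because under the hypotheses of Proposition~\ref{ri-space} the dual pair is again regular (shrinking basis, resp.\ conditional expectations acting on the r.i.\ duals). With that point filled in, your version of (2) is correct and arguably more elementary than the paper's appeal to radial limits of vector-valued $H^\infty$ functions; without it there is a gap at exactly the endpoint passage.
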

\begin{proof}
(1) Since $B_\theta(x)'(\theta+it)= \Omega_{\theta+it}\left(B_\theta(x)(\theta+it)\right) =
\Omega_\theta\left(B_\theta(x)(\theta+it)\right)$ for all $t\in\R$, the function
$t\to B_\theta(x)(\theta+it)$ satisfies the differential equation
\begin{equation}\label{diff}
f^{\prime}(t) = i\Omega_\theta(f(t)).
\end{equation}
Equivalently, $B_\theta(x)$ satisfies the equation $f^{\prime}(z)=\Omega_\theta(f(z))$ for
$z \in \mathbb{S}_\theta = \{z \in \mathbb{S} : Re(z) = \theta\}$.
Since  $B_\theta(x): \mathbb{S} \to \Sigma$ is the unique $1$-extremal and $x\in \Delta_0$,
it is analytic as a map into $\Delta$.
When $\Omega_\theta$ is linear, $\Omega_\theta\circ B_\theta(x): \mathbb{S} \to\Sigma$ is
analytic and takes values in $\Delta$ for $x\in \Delta_0$, and the derivative
$B_\theta(x)': \mathbb{S} \to \Sigma$ is of course analytic.
Since both functions coincide on $S_\theta$, they coincide on $\mathbb{S}$; thus $B_\theta(x)$
solves the equation $f^{\prime}(z)=\Omega_\theta(f(z))$ on $\mathbb{S}$ and we get
\begin{eqnarray*}
\Omega_\theta(x) &=& \Omega_\theta\left(B_z(x)(z)\right) =
   \Omega_\theta\left(B_\theta(B_z(x)(\theta))(z)\right)\\
&=& B_\theta\left(B_z(x)(\theta)\right)'(z) = B_z(x)'(z)=  \Omega_z(x).
\end{eqnarray*}

To prove (2) we need to make sense of the function
$G(t)=e^{-it\Omega_\theta}B_\theta(x)(\theta + it)$ for $x\in \Delta_0$.

Pick $n\in\N$ such that $x\in E_n$.
Since $\Omega_\theta(E_n)\subset E_n$, the iterations $\Omega_\theta^k$ are operators on $E_n$,
so that $G$ is well defined.
Now, since $\Omega_\theta: X_\theta \to \Sigma$ is linear and bounded,
\begin{eqnarray*}
G^{\prime}(t) &= &e^{-it\Omega_\theta} i B_\theta(x)'(\theta+ it)-
   e^{-it\Omega_\theta} i\Omega_\theta\left(B_\theta(x)(\theta + it)\right)\\
&=& e^{-it\Omega_\theta} \Big(i\Omega_\theta \left(B_\theta(x)(\theta+it)\right)
   -i\Omega_\theta \left(B_\theta(x)(\theta+it)\right) \Big)=0.
\end{eqnarray*}
So the function $G(t)$ is constant and equal to $G(0)=x$;
thus $B_\theta(x)(\theta+it) =e^{it\Omega_\theta} x$.
This means that for any $z$ in the vertical line through $\theta$,
$B_\theta(x)(z)=e^{(z-\theta)\Omega_\theta}x$.
Since both functions are analytic on $\mathbb{S}$, it turns out that
$B_\theta(x)(z)=e^{(z-\theta)\Omega_\theta}x$ for all $z\in \mathbb{S}$ and $x\in \Delta_0$.
Since the functions are equal on $\mathbb{S}$, they have the same radial limits a.\ e.\ on the border.

So $B_{\theta}(x)(z) = e^{(z-\theta)\Omega_\theta}x$ for a.\ e.\ $z$ on the border of $\mathbb{S}$.
Thus $1 = \|B_{\theta}(x)(it)\|_0 = \|e^{(it-\theta)\Omega_\theta}x\|_0$ for a.\ e.\ $t \in \mathbb{R}$.
By continuity we have that $\|e^{(it-\theta)\Omega_\theta}x\|_0 = \|x\|_{\theta}$ for every $t \in \mathbb{R}$.
Clearly the same reasoning works for $1 + it$ instead of $it$.

Thus $\|x\|_\theta=\|B_\theta(x)(s)\|_s=\|e^{(s-\theta)\Omega_\theta} x\|_s$ for each $s \in [0,1]$ and $x \in \Delta_0$.
Taking $y = e^{\theta \Omega_{\theta}} x$, we get $\|x\|_0 = \|e^{s \Omega}\Omega_{\theta} x\|_s$ for every $s \in [0, 1]$
and every $x \in \Delta_0$, which is dense in both $X_0$ and $X_s$. Hence the map
$x\to e^{-s\Omega_\theta}x$ extends to an isometry between $X_s$ and $X_0$, and
$\|x\|_s=\|e^{-s\Omega_\theta} x\|_0$.

(3) Since $\|x\|_z=\|B_z(x)(z+it)\|_{z+it}=\|B_z(x)(z+it)\|_z=\|e^{it\Omega_\theta}x\|_z$ and the norm
$\|B_\theta(x)(z)\|_z$ is constant and equal to $\|x\|_\theta$ for $z$ in the vertical line through
$\theta$, we get that $\{e^{it\Omega_\theta}\}_{t\in \R}$ is a group of linear isometries on $X_z$.
\end{proof}

We can compare this result to Theorem \ref{stsplit}, in which the $\Omega_\theta$ trivial implies that
$X_1$ is a weighted version $X_0$ and $\Omega$ is the operator acting as multiplication by $-{\log}\ w$.

\end{document}